\numberwithin{equation}{section}
\theoremstyle{thmstyleone}%
\newtheorem{theorem}{Theorem}[section] 
\newtheorem{lemma}[theorem]{Lemma}
\newtheorem{corollary}[theorem]{Corollary}
\newtheorem{proposition}[theorem]{Proposition}
\theoremstyle{definition}
\newtheorem{definition}[theorem]{Definition}
\newtheorem{remark}[theorem]{Remark}
\newtheorem{example}[theorem]{Example}
\numberwithin{equation}{section}
\def\F{{\mathbb F}}
\def\A{\mathcal A}
\def\L{\mathcal L}
\def\SS{\mathcal S}
\def\char{{\rm char}\, }
\def\dim{{\rm dim}\,}
\def\R{{\mathbb R}}
\title{Steady growth of length function and Malcev algebras \footnote{The work was financially supported by the grant RSF 21-11-00283.}}
\author{A. Guterman, D. Kudryavtsev}
\date{}
\newcommand{\Addresses}{
	\bigskip
	\footnotesize
	
	\textsc{Alexander Guterman, }{Faculty of Algebra, Department of
		Mechanics and Mathematics, Lomonosov Moscow State University, Moscow
		119991, Russia; Moscow Center for Fundamental and Applied Mathematics,  Moscow,   119991, Russia; Moscow Center for Continuous Mathematical Education,  Moscow, 119002, Russia
	}\par\nopagebreak
	\textit{E-mail address}:   \texttt{guterman@list.ru}
	
	\medskip

	\textsc{Dmitry Kudryavtsev, }{School of Mathematics, University of Manchester, Manchester M13 9PL, UK; Moscow Center for Fundamental and Applied Mathematics,  Moscow, 119002, Russia}\par\nopagebreak
	\textit{E-mail address}:   \texttt{dmitry.kudryavtsev@postgrad.manchester.ac.uk}
}
\begin{document}
\maketitle

\Addresses

\begin{abstract}We introduce and investigate the algebras of steadily growing length, that is the class of algebras, where the length is bounded by a linear function of the dimension. In particular we show that Malcev algebras belong to this class and establish the exact upper bound for its length.

MSC: 15A03, 17A99, 15A78
\end{abstract}

{\em Keywords}: length function, nonassociative algebra,  Malcev algebras,   growth

\section{Introduction}

Let $\F$ be an arbitrary field. In this paper $\A$ denotes a finite dimensional not necessarily unital not necessarily associative $\F$-algebra with the multiplication $(\cdot)$ usually denoted by the concatenation. Let $\SS=\{a_1,\ldots,a_k\}$ be a finite generating set of $\A$. Any product of a finite number of elements from $\SS$ is a {\em word} in $\SS$. The {\em length} of the word $w$, denoted $l(w)$, equals to the  number of letters in the corresponding product.
It is worth noting that different choices of brackets provide different words of the same length due to the non-associativity of $\A$.  If $\A$ is unital, we consider $1$ as a word in $\SS$ with the {\em length $0$}.

The set of all words  in $\SS$ with the lengths less than or equal to $i$ is denoted by $\SS^i$, here $i\ge 0$.

Note that similarly to the associative case, $m<n$ implies that $\SS^m \subseteq \SS^n$.

The set $\L_i(\SS) = \langle \SS^i \rangle$  is the linear span  of  the set  $\SS^i$
(the set of all finite linear combinations with coefficients belonging to
$\mathbb{F}$). We write $\L_i$ instead of $\L_i(\SS)$ if $\SS$ is clear from the context. It should be noted that for unital algebras $\L_0(\SS)=\langle
1 \rangle=\mathbb{F}$ for any $\SS$, and for non-unital algebras $\L_0 = \emptyset$.
We denote  $\L(\SS) =\bigcup\limits_{i=0}^\infty \L_i(\SS)$. 

Note that if $\SS$ is a generating set of the algebra $\A$, i.e., $\A$ is the smallest subalgebra of itself containing $\SS$, then any element of $\A$ can be expressed as a linear combination of words in elements of $\SS$.  In the above notations, if  the set $\SS$ is generating for $\A$,  then we have $\A=\L(\SS)$. If it is possible to express all elements of $\A$ using words of length at most $k$ in $\SS$, but it is not possible to use only words of length at most $k - 1$, then it is said that the length of the generating set $\SS$ is $k$.

\begin{definition}\label{alg_len}   The length $l(\A)$ of a finitely generated algebra $\A$ is the maximal length of its finite generating systems,     $l(\A)=\max \{l(\SS): \L(\SS)=\A\}$. 
\end{definition}

Notice that the unital algebra $\A$ has length   $0$ if and only if $\A = \F\cdot 1_\A$. Otherwise length is a positive integer or infinite.

The problem of the associative algebra length computation was first discussed in \cite{SpeR59, SpeR60} in the context of the mechanics of isotropic continua. Since then this important algebraic invariant became an active topic of investigations.  From one side the investigations of length function is the area with a number of interesting open algebraic problems, see \cite{GutLMSh,LafMSh,Long1,Long2,Mar09,Pap97,Paz84}. For example, even the length of the matrix algebra is not known. From another side,  the number of applications, where this invariant is used, is intensively growing nowdays, see \cite{Alp1,Alp2,BelS03,GabNRSV93,MihkSh,Ser00,FutHS17,Pea62,Laf86}.

Recent results on the lengths of non-associative algebras were obtained in the works   \cite{GutK18,GutK19, GutK21}. The key tool used across them was the notion of characteristic sequences which is defined as follows.

\begin{definition} \label{CharSeqB}\cite[Definition 2.3]{GutK21} Given a set of generators $\SS$ in an algebra $\A$, by the {\em characteristic sequence} of $\SS$
	in $\A$ we understand a monotonically non-decreasing sequence of  non-negative integers $(m_1,\ldots, m_N)$, constructed by the following rules:

\noindent 1.  Let $s_0 = \dim L_0(\SS)$. If $s_0 =1$, we set $m_1=0$.  

\noindent 2.  Denoting $s_1=\dim \L_1(\SS) - \dim L_0(\SS) $, we define ${m_{s_0+1}= \ldots  =m_{s_0+s_1}=1}$.

\noindent 3.  Let for some $r>0$, $t>1$ the elements $m_1,\ldots, m_r$ be already defined and the sets $ \L_0(\SS) ,\ldots , \L_{t-1}(\SS) $ be considered. Then we inductively continue the process in the following way. Denote $s_t=\dim \L_t(\SS) - \dim \L_{t-1}(\SS)$. Then we define $m_{r+1} =\ldots =m_{r+s_t}=t$.  
\end{definition}

\begin{lemma}\label{N=n}\cite[Lemma 2.6]{GutK21}
The characteristic sequence of $\SS$ contains exactly $\dim \A$ terms. Moreover, for the last term we have $m_N=l(\SS)$.
\end{lemma}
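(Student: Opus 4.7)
The plan is to read off both statements directly from the telescoping structure built into Definition~\ref{CharSeqB}. First I would prove, by straightforward induction on $t$, that after the rule has processed $\L_0(\SS), \L_1(\SS), \ldots, \L_t(\SS)$, the number of terms written into the characteristic sequence is exactly $s_0 + s_1 + \cdots + s_t$. Using the definitions $s_0 = \dim \L_0(\SS)$ and $s_i = \dim \L_i(\SS) - \dim \L_{i-1}(\SS)$ for $i \geq 1$, this sum telescopes to $\dim \L_t(\SS)$, in both the unital case ($s_0 = 1$, $\L_0 = \F\cdot 1_\A$) and the non-unital case ($s_0 = 0$, $\L_0 = \emptyset$).

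Next I would use the fact that $\SS$ generates $\A$, so $\A = \bigcup_{i \geq 0} \L_i(\SS)$; since $\dim \A$ is finite and the chain $\L_0(\SS) \subseteq \L_1(\SS) \subseteq \cdots$ is monotone with respect to inclusion, there is a smallest index $t^*$ such that $\L_{t^*}(\SS) = \A$. By the definition of the length of a generating set recalled just before Definition~\ref{alg_len}, this $t^*$ equals $l(\SS)$. For all $t > l(\SS)$ we have $\L_t(\SS) = \L_{t-1}(\SS) = \A$, so $s_t = 0$ and the inductive rule appends no further terms. Consequently the sequence terminates and its total length is $N = \dim \L_{l(\SS)}(\SS) = \dim \A$, which gives the first claim.

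For the second claim, I would note that minimality of $l(\SS)$ forces $\L_{l(\SS)-1}(\SS) \subsetneq \L_{l(\SS)}(\SS)$, hence $s_{l(\SS)} \geq 1$. The block of terms appended at step $t = l(\SS)$ consists, by construction, of $s_{l(\SS)}$ copies of the value $l(\SS)$, and these are the final non-empty block appended before the process terminates. Therefore $m_N = l(\SS)$, as required.

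I do not expect a serious obstacle here; the only care needed is to keep the unital and non-unital conventions for $\L_0(\SS)$ consistent with the telescoping identity and to justify cleanly that the process terminates exactly at $t = l(\SS)$ (with the key point being the minimality clause hidden in the phrase ``it is not possible to use only words of length at most $k-1$'').
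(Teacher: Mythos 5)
Your proof is correct. Note that the paper does not actually prove this lemma --- it imports it from \cite[Lemma 2.6]{GutK21} without proof --- so there is no in-paper argument to compare against; your telescoping count $\sum_t s_t = \dim \L_t(\SS)$ together with the observation that the chain $\L_0(\SS)\subseteq\L_1(\SS)\subseteq\cdots$ stabilizes precisely at $t^*=l(\SS)$ (so the last non-empty block of appended terms has value $l(\SS)$) is the natural argument and handles the only delicate point, namely that intermediate steps with $s_t=0$ neither add terms nor terminate the process.
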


In the associative case the characteristic sequences are not  useful since for a given generating set $\SS$ the  dimension sequence $(\dim \L_i(\SS))_{i=1}^{\infty}$  is strictly monotone until the stabilization happens, in particular, there are no repetitions before stabilization in this sequence.  
By Definition \ref{CharSeqB}, Item 3, it means that for all $t=2,3,\ldots,$ the value $s_t\ne0$, so each integer $t$  appear in the characteristic sequence, and thus neighboring elements of characteristic sequence cannot differ in their values by more than one. In the non-associative case this is not necessarily true, and due to this reason  it was possible to produce a method of characteristic sequences   to establish  strict upper bounds on length for general non-associative algebras as well as for several known classes of  non-associative algebras, see~\cite{GutK18,GutK19}.

\begin{theorem}[{\cite[Theorem 2.7]{GutK18}}]
	Let $\A$ be a unital $\F$-algebra,  $\dim \A = n \ge 2$. Then $l( \A)\le 2^{n-2}$.
\end{theorem}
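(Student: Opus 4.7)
The plan is to work with the characteristic sequence $(m_1, \ldots, m_n)$ of an arbitrary finite generating set $\SS$ of $\A$; by Lemma \ref{N=n}, $l(\SS) = m_n$, so it suffices to bound $m_n$ uniformly over all generating sets and then take the maximum.

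First I would pin down the first two terms. Because $\A$ is unital, Definition \ref{CharSeqB} gives $s_0 = 1$ and hence $m_1 = 0$. Since $\SS$ generates $\A$ and $\dim \A \ge 2$, the space $\L_1(\SS)$ strictly contains $\L_0 = \F \cdot 1$, so $\dim \L_1 \ge 2$ and therefore $m_2 = 1$.

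The heart of the argument is a doubling estimate: $m_{k+1} \le 2m_k$ for every $k$ with $2 \le k \le n-1$. I would prove this by contradiction. Assume $\dim \L_{2m_k}(\SS) \le k$; combined with $\L_{m_k} \subseteq \L_{2m_k}$ and $\dim \L_{m_k} \ge k$ this forces $\L_{m_k} = \L_{2m_k}$. Bilinearity of the product then yields $\L_{m_k} \cdot \L_{m_k} \subseteq \L_{2m_k} = \L_{m_k}$, so $\L_{m_k}$ is a subspace closed under multiplication. Since $k \ge 2$ ensures $m_k \ge m_2 = 1$, this subspace contains both $1_\A$ and $\SS$, hence it is a unital subalgebra containing a generating set of $\A$. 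Therefore $\L_{m_k} = \A$ and $\dim \L_{m_k} = n$, contradicting $\dim \L_{m_k} \le k \le n-1$. Thus $\dim \L_{2m_k} \ge k+1$, which by definition of the characteristic sequence gives $m_{k+1} \le 2m_k$.

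Iterating the doubling bound starting from $m_2 = 1$ gives $m_n \le 2m_{n-1} \le \cdots \le 2^{n-2} m_2 = 2^{n-2}$, which is the desired bound. The only real conceptual step is spotting that a collapse between levels $m_k$ and $2m_k$ automatically turns $\L_{m_k}$ into a unital subalgebra; the rest is bookkeeping between the characteristic sequence and the dimensions $\dim \L_t(\SS)$, together with the standing fact that a unital subspace of $\A$ closed under multiplication and containing $\SS$ must equal $\A$.
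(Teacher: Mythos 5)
Your argument is correct, and its overall skeleton (characteristic sequence, $m_1=0$, $m_2=1$, the doubling inequality $m_{k+1}\le 2m_k$, then iterate to $m_n\le 2^{n-2}m_2$) is the same as in the cited source. Where you genuinely diverge is in how the doubling step is justified. The paper's route goes through irreducible words: by Lemma~\ref{lem_1} and Proposition~\ref{cor_core} every term $m_h\ge 2$ of the characteristic sequence is a sum $m_{t_1}+m_{t_2}$ of two earlier positive terms (this is Corollary~\ref{cor_sum}), whence $m_{k+1}\le 2m_k$ by monotonicity. You instead run a dimension-counting stabilization argument: if $\dim\L_{2m_k}\le k$ then $\L_{m_k}=\L_{2m_k}$, so $\L_{m_k}$ is a unital subspace closed under multiplication containing $\SS$, hence all of $\A$, contradicting $k\le n-1$. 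Both are sound; yours is more elementary and self-contained (it needs neither the notion of irreducible word nor Lemma~\ref{lem_1}), and is essentially the classical stabilization argument from the associative length theory transplanted to the non-associative setting. What the paper's heavier machinery buys is reusability: the irreducible-word factorization and Corollary~\ref{cor_sum} are exactly the tools needed later for the sharper, class-specific bounds ($k$-mixing, $k$-sliding, Malcev), so deriving the crude $2^{n-2}$ bound from them comes for free, whereas your argument, while cleaner here, does not generalize to those refinements.
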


The above bound is strict, see \cite[Example 2.8]{GutK18}. The next natural step is to consider different classes of algebras determined by certain special restrictions on the growth of the length function. The first step in this direction was the notion of slowly growing length, introduced in~\cite{GutK21}.

\begin{definition}\cite[Definition 1.5]{GutK21}
We say that a class of algebras has {\em slowly growing length}, if for any representative  $\A$ of this class it holds that $l(\A) \le \dim (\A) $.
\end{definition}

In \cite{GutK21} certain general properties of this class are found.  
In particular, it is shown that this class is considerably large. Among the other classes of algebras it contains  finite dimensional associative algebras,  Lie algebras,  Leibniz algebras, Novikov algebras, and Zinbiel algebras, as well as many other important classical finite dimensional algebras. Exact upper bounds for the length of these algebras are obtained in \cite{GutK21}. 
Moreover,  some polynomial conditions on the algebra elements that guarantee the slow growth of the length function are found. 

In this paper we introduce a more general class of algebras  extending  the notion above.

\begin{definition}
We say that a class of algebras has a {\em steadily growing length}, if for every representative  $\A$ of this class it holds that $l(\A) \le c \cdot \dim (\A) + b$, where $c$ and $b$ are constants which depend only on the class. We call $c$ an {\em upper length velocity} or {\em u-velocity} of this class.
\end{definition}

By definition, classes of algebras with slowly growing length have steadily growing length and their u-velocity is 1. We show below that the opposite  is not necessarily true, cf. Example~\ref{Ex_X_d}.    

We remark that there are  
classes of algebras that are not of steadily growing length. Certain particular examples can be found  in \cite[Examples 2.8 and 5.5]{GutK18} and \cite[Example 4.4]{GutK19}.   

The main purpose of our paper is to investigate the algebras of steadily growing length.  To do this we introduce the classes of  $k$-mixing and $k$-sliding  algebras that generalize mixing and sliding algebras introduced in \cite{GutK21} for arbitrary $k$ variables.
These two properties guarantee that for a characteristic sequence $(m_1,\ldots,m_d)$ of a generating system of an algebra it holds that $m_j \le m_{j-1} + k-1$. This in turn easily provides an estimation for the length which is a linear function in the dimension with the coefficient~$k$.  

An interesting borderline case is the class of Malcev algebras introduced in 1955 by A.I. Malcev,~\cite{Mal}.
 
\begin{definition} \label{def:Mal}
	An algebra $\A$ is called a {\em Malcev algebra} if
	
	1. $xy=-yx$  for all $x,y \in \A$,
	
	2. $ (xy)(xz) =((xy)z)x + ((yz)x)x + ((zx)x)y$ for all $x,y,z \in \A$.
\end{definition}

This class is naturally connected to the class of Lie algebras via the following property.

\begin{proposition}\cite[Corollary 4.4]{Sagle}\label{prop_liemalcev}
Any two elements $a, b$ of a Malcev algebra $\A$ are contained in a Lie subalgebra of $\A$.
\end{proposition}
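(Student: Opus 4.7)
The plan is to establish the proposition by showing that the subalgebra $\B := \langle a, b \rangle \subseteq \A$ generated by $a$ and $b$ is itself a Lie algebra; since $a, b \in \B$, this immediately yields the claim. Anticommutativity of $\B$ is inherited from Definition~\ref{def:Mal}(1), so the only thing to verify is that the Jacobian
\begin{equation*}
J(x,y,z) := (xy)z + (yz)x + (zx)y
\end{equation*}
vanishes identically on $\B$.

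A key preliminary observation, valid in any anticommutative algebra, is that $J$ is trilinear and alternating: swapping two arguments negates the value, and in particular $J(u,v,w) = 0$ whenever two of the three entries coincide. Since $\B$ is linearly spanned by iterated monomials in $a$ and $b$, by multilinearity it suffices to show that $J$ vanishes on all triples of such monomials.

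The heart of the argument is to extract a reduction identity from the Malcev axiom $(xy)(xz) = ((xy)z)x + ((yz)x)x + ((zx)x)y$. Rearranging its terms and applying anticommutativity, I would recast this axiom as a formula expressing $J(x, y, xz)$ entirely in terms of words that are strictly shorter than $xz$ in the third slot. Specializing $(x, y, z)$ to monomials in $a, b$ then rewrites any $J(u, v, w)$ in which at least one slot is a proper product as a linear combination of Jacobians on triples of smaller total word length, plus contributions that visibly cancel by anticommutativity and the alternating property.

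Induction on the total word length of $(u, v, w)$ then closes the argument: in the base case $u, v, w \in \{a, b\}$ two entries must coincide, so $J = 0$ by alternation, while the inductive step uses the reduction identity to replace $J(u, v, w)$ by a combination of Jacobians of strictly smaller complexity, all of which vanish by hypothesis. The main obstacle is the derivation of the reduction identity itself: this is where the full force of the Malcev axiom, as opposed to mere anticommutativity, is required, and one must check carefully that every rewriting strictly decreases a well-defined complexity measure so that the induction terminates.
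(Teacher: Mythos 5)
Your high-level strategy is the standard one (and, as far as one can tell, the one behind the cited source): the paper itself offers no proof of this proposition, it only cites Sagle, so the comparison can only be with the literature. Reducing the claim to "the subalgebra $\B=\langle a,b\rangle$ is a Lie algebra", noting that $J(x,y,z)=(xy)z+(yz)x+(zx)y$ is trilinear and alternating, and extracting from the Malcev axiom the identity $J(x,y,xz)=J(x,y,z)x$ are all correct moves, and your base case is fine.

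The gap is precisely at the point you yourself flag as "the main obstacle", and it is not a routine verification. The identity $J(x,y,xz)=J(x,y,z)x$, even combined with the alternating property, reduces $J(u,v,w)$ only when the third slot $w=w_1w_2$ has a factor equal to $u$ or to $v$; for a general triple of monomials in $a,b$ this need not happen. Take $u=a$, $v=b$, $w=m_1m_2$ with $m_1=(ab)a$ and $m_2=(ab)b$. The only tool then available is the linearization of your identity in $x$, namely $J(x,y,wz)+J(w,y,xz)=J(x,y,z)w+J(w,y,z)x$, which yields
$$J(a,b,m_1m_2)=J(a,b,m_2)\,m_1+J(m_1,b,m_2)\,a-J(m_1,b,am_2).$$
The first two terms on the right indeed involve Jacobians of smaller total word length, but the last term $J(m_1,b,am_2)$ has exactly the same total word length, $2+l(m_1)+l(m_2)$, as the left-hand side: the linearization merely exchanges a factor of the third slot with the first argument. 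So total word length is not a valid complexity measure, the induction as you describe it does not terminate, and no terminating measure is supplied. Closing this loop is the actual content of Sagle's argument (a chain of further auxiliary identities for $J$), not a detail. A minor additional point: the alternating property $J(u,u,w)=0$ follows from $xy=-yx$ only when $\char \F\neq 2$, a hypothesis your argument needs but the statement as quoted does not carry.
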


The detailed and self-contained exposition of Malcev algebras can be found in  \cite{Elduque,Filippov,Nagy}. This class does not fit in the framework provided by  \cite{GutK21} and the classes of mixing and sliding algebras introduced there. However the   generalization of its methods to $k$-sliding and $k$-mixing algebras allows  to prove that Malcev algebras are $3$-mixing, which guarantees that they are steadily growing. Then  it is possible to tighten the length bound even further. In this paper we prove that Malcev algebras have slowly growing length using a detailed analysis of inner structure of its words by the new method proposed in the last section of the paper. 

Our paper is organized as follows. In Section 2 we provide previously established results and notions, relevant for further proofs. In Section 3 polynomial properties which guarantee a steadily growth of length are  introduced and studied. Namely, we introduce $k$-mixing and $k$-sliding algebras, relate them with mixing and sliding algebras, and investigate their length  by means of the characteristic sequences. Section 4 contains several examples. Throughout Section 5 we examine inner structure of words in terms of lengths of their subwords and the bounds on these lengths for $k$-mixing and $k$-sliding algebras. In Section 6 we examine Malcev algebras in detail and compute the strict bound on their length.

\section{Basic results and notions}

\begin{definition}\label{Def_Irr}
	A word $w$  from a generating set $\SS$ of an algebra $\A$ is {\em irreducible}, if for each integer $m,\ 0 \leq m<l(w),$ it holds  that $w \notin L_m(\SS)$.
\end{definition}

\begin{remark}
	For all algebras $\A$ and all generating sets $\SS$ we consider $0$ to be an irreducible word of length $- \infty$.

	If $\A$ is unital we consider $1$ to be an irreducible word of length $0$. 
\end{remark}

\begin{lemma}\label{lem_1} \cite[Lemma 2.14]{GutK18}
	Any irreducible word $w$, $l(w)>1$, is a product of two irreducible words of non-zero lengths.
\end{lemma}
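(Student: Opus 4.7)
The plan is to use the non-associative structure of words directly. Since $w$ is a word of length $l(w) > 1$ in $\SS$, it was constructed as a product; that is, there is a canonical outermost decomposition $w = u \cdot v$ where $u$ and $v$ are themselves words in $\SS$ with $l(u), l(v) \ge 1$ and $l(u) + l(v) = l(w)$. This decomposition is determined by the bracketing of $w$, so the existence of such $u, v$ is essentially immediate; the content of the lemma is that both factors must be irreducible.

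I would argue by contradiction. Suppose, without loss of generality, that $u$ is not irreducible. Then by Definition~\ref{Def_Irr} there exists $m$ with $0 \le m < l(u)$ such that $u \in \L_m(\SS)$. Unfolding this, we can write $u = \sum_i c_i u_i$ with $c_i \in \F$ and $l(u_i) \le m$ for each $i$. Multiplying on the right by $v$ (which is legal in $\A$) yields $w = u \cdot v = \sum_i c_i (u_i \cdot v)$. Each product $u_i \cdot v$ is itself a word in $\SS$, of length $l(u_i) + l(v) \le m + l(v) < l(u) + l(v) = l(w)$. Therefore $w \in \L_{l(w)-1}(\SS)$, which contradicts the irreducibility of $w$. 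The same argument with left multiplication by $u$ handles the case where $v$ is reducible.

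The only mildly delicate point is making sure the rewriting $u = \sum c_i u_i$ with $l(u_i) < l(u)$ really does produce words that, after multiplication by $v$, remain words of length strictly less than $l(w)$; but this is automatic from the definition of $\L_m(\SS)$ as the span of $\SS^m$ and the additive length behaviour of a single product in a non-associative algebra. There is no real obstacle here beyond being careful about the trivial cases: in the non-unital setting we have $\L_0 = \emptyset$ so $m \ge 1$ and the $u_i$ are genuine nonzero words, while in the unital setting the convention $l(1) = 0$ keeps the inequality $l(u_i \cdot v) < l(w)$ intact. Thus the contradiction goes through and $w$ decomposes as a product of two irreducible words of non-zero length, as claimed.
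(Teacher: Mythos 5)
Your proof is correct and is essentially the standard argument for this lemma, which the present paper only cites from \cite{GutK18} without reproving: decompose $w$ by its outermost multiplication, and observe that a reducible factor would, after substituting its expression as a linear combination of shorter words and expanding by bilinearity, exhibit $w$ as an element of $\L_{l(w)-1}(\SS)$, contradicting the irreducibility of $w$. No changes needed.
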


\begin{proposition} \label{cor_core} \cite[Corollary 2.5]{GutK21}
	\begin{enumerate}
		\item For any term $m_h$ of the characteristic sequence of $\SS$ there is an irreducible word in $\L(\SS)$ of  length $m_h$.
		\item If there is an irreducible word in $\SS$ of length $k$, then $k$ is included into the characteristic sequence of~$\SS$.
	\end{enumerate}
\end{proposition}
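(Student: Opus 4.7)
The plan is to prove both parts by a direct appeal to Definition 1.2, which ties each term $m_h$ of the characteristic sequence to the dimension growth of the chain $\L_0(\SS) \subseteq \L_1(\SS) \subseteq \cdots$. The unifying observation is that an integer $k$ occurs in the sequence precisely when $s_k = \dim \L_k(\SS) - \dim \L_{k-1}(\SS) \geq 1$, and this in turn is equivalent to the existence of an irreducible word of length exactly $k$; so the two parts are really two directions of one equivalence.

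For part (1), suppose $m_h = t$. By the inductive rule in Definition 1.2, the integer $t \geq 1$ is appended to the sequence only when $s_t$ copies of $t$ are added, so the very appearance of $m_h = t$ forces $s_t \geq 1$, i.e.\ $\L_t(\SS) \supsetneq \L_{t-1}(\SS)$. Now $\L_t(\SS) = \langle \SS^t \rangle$ and $\SS^t = \SS^{t-1} \sqcup (\SS^t \setminus \SS^{t-1})$, while $\langle \SS^{t-1} \rangle = \L_{t-1}(\SS)$; hence some word $w \in \SS^t \setminus \SS^{t-1}$ must satisfy $w \notin \L_{t-1}(\SS)$, for otherwise $\L_t(\SS)$ would coincide with $\L_{t-1}(\SS)$. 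Since $\L_m(\SS) \subseteq \L_{t-1}(\SS)$ for every $m \leq t-1$, we get $w \notin \L_m(\SS)$ for all $m < t = l(w)$, so $w$ is irreducible of length $t = m_h$. The edge case $m_h = 0$, which by Definition 1.2 can occur only in the unital setting, is immediate from the convention in Remark 2.2 that $1$ is irreducible of length $0$.

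For part (2), let $w$ be an irreducible word with $l(w) = k \geq 0$. Then $w \in \SS^k \subseteq \L_k(\SS)$, while Definition 2.1 yields $w \notin \L_{k-1}(\SS)$. Therefore $\dim \L_k(\SS) > \dim \L_{k-1}(\SS)$, i.e.\ $s_k \geq 1$, and rule~(3) of Definition~1.2 then places at least one term equal to $k$ into the characteristic sequence.

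I do not expect a substantive obstacle here; both directions are essentially a restatement of the fact that $\dim \L_k(\SS) > \dim \L_{k-1}(\SS)$ is equivalent to the existence of a single word of length exactly $k$ lying outside $\L_{k-1}(\SS)$. The only minor subtlety is in part~(1), where one must extract a word of length precisely $t$ rather than settle for an arbitrary element of $\L_t(\SS) \setminus \L_{t-1}(\SS)$; this is handled by the spanning decomposition $\SS^t = \SS^{t-1} \sqcup (\SS^t \setminus \SS^{t-1})$ used above.
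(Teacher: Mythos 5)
Your proof is correct. Note that the paper does not prove this proposition at all --- it is quoted from \cite[Corollary 2.5]{GutK21} --- so there is no in-paper argument to compare against; what you give is the standard direct unwinding of Definitions \ref{CharSeqB} and \ref{Def_Irr}, namely the equivalence between $s_k\ge 1$ and the existence of a word of length exactly $k$ lying outside $\L_{k-1}(\SS)$, and you correctly handle the one delicate point in part (1) (producing a word of length exactly $t$ rather than at most $t$) via the decomposition $\SS^t=\SS^{t-1}\sqcup(\SS^t\setminus\SS^{t-1})$ together with the observation that $\L_m(\SS)\subseteq\L_{t-1}(\SS)$ for all $m<t$.
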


\begin{corollary}\label{cor_sum}
Let $\A$ be an $\F$-algebra, $\dim \A =n >2$. Assume $\SS$ is a generating set for $\A$ and $M=(m_0, m_1,\ldots, m_{n-1})$ is the characteristic sequence of $\SS$. Then for each $h$ satisfying $m_h \ge 2$ it holds that there are indices $1 \le t_1\le t_2<h$ such that $m_h=m_{t_1}+m_{t_2}$ and $m_{t_1}, m_{t_2} > 0$.

\end{corollary}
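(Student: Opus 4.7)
The plan is to combine the two parts of Proposition \ref{cor_core} with Lemma \ref{lem_1} in a direct way: first produce an irreducible word of length $m_h$, then split it, then locate its two factors back inside the characteristic sequence.

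First I would apply Proposition \ref{cor_core}(1) to the term $m_h$ to obtain an irreducible word $w\in\L(\SS)$ with $l(w)=m_h\ge 2$. Since the length of $w$ exceeds $1$, Lemma \ref{lem_1} lets me write $w=u\cdot v$ with $u,v$ irreducible and $l(u),l(v)\ge 1$. By the definition of word length we then have
\[
m_h \;=\; l(w) \;=\; l(u)+l(v),
\]
so in particular $0<l(u),l(v)<m_h$. Setting (after possibly swapping) $l(u)\le l(v)$, I obtain two positive integers strictly smaller than $m_h$ whose sum is $m_h$.

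Next I would invoke Proposition \ref{cor_core}(2): since $u$ and $v$ are irreducible, both $l(u)$ and $l(v)$ occur as terms of the characteristic sequence $M$. Choose indices $t_1\le t_2$ with $m_{t_1}=l(u)$ and $m_{t_2}=l(v)$. Because the sequence $M$ is monotonically non-decreasing (Definition \ref{CharSeqB}) and $l(u),l(v)<m_h$, every index at which a value strictly smaller than $m_h$ appears must be smaller than $h$; hence I can take $t_1\le t_2<h$. The inequalities $m_{t_1}=l(u)>0$ and $m_{t_2}=l(v)>0$ force $t_1\ge 1$ in both the unital and non-unital cases, since the only possibly vanishing term in the characteristic sequence is the initial $m_0$ (present only when $\A$ is unital, reflecting $\L_0=\F\cdot 1_\A$). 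This gives the required indices $1\le t_1\le t_2<h$ with $m_h=m_{t_1}+m_{t_2}$ and both summands positive.

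The argument is essentially bookkeeping, so there is no serious obstacle; the only point that deserves care is ensuring $t_1\ge 1$ rather than merely $t_1\ge 0$, which is handled by the positivity requirement $m_{t_1}>0$ together with the fact that the only possible zero entry in the characteristic sequence is $m_0$ arising from the scalar subspace of a unital algebra.
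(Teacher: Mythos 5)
Your proposal is correct and follows essentially the same route as the paper's proof: Proposition \ref{cor_core}(1) to produce an irreducible word of length $m_h$, Lemma \ref{lem_1} to factor it into two irreducible words of positive lengths, and Proposition \ref{cor_core}(2) plus monotonicity of the characteristic sequence to place those lengths at indices $1\le t_1\le t_2<h$. The only cosmetic difference is that you sort the two factor lengths once, whereas the paper spells out the three cases $k_1<k_2$, $k_1>k_2$, $k_1=k_2$ separately.
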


\begin{proof}
By Proposition \ref{cor_core} Item 1 each term $m_h$ of the characteristic sequence corresponds to an irreducible word of the length $m_h$. Denote it by $w_{m_h}$. By Lemma \ref{lem_1}, each irreducible word of the length $m_h \ge 2$ can be represented as a product of two irreducible words of positive lengths. Thus, $w_{m_h}=w_{k_1} w_{k_2}$ for some irreducible words $w_{k_1}, w_{k_2}$ of lengths $k_1, k_2 >0$, correspondingly. Assume $k_1 < k_2$. Then by Proposition \ref{cor_core} Item 2 there are indices $ t_1, t_2 \ge 1$ such that $m_{t_1}=k_1$ and $m_{t_2}=k_2$. Since $M$ is non-decreasing,  $1\le t_1 \le t_2 <h$.  Assume $k_1 > k_2$. Then by Proposition \ref{cor_core} Item 2 there are indices $ t_1, t_2 \ge 1$ such that $m_{t_1}=k_2$ and $m_{t_2}=k_1$. Since $M$ is non-decreasing,  $1\le t_1 \le t_2 <h$.  Assume $k_1 = k_2$. Then by Proposition \ref{cor_core} Item 2 there are indices $ t_1=t_2 \ge 1$ such that $m_{t_1}=m_{t_2}=k_1=k_2$. Since $M$ is non-decreasing,  $1\le t_1 \le t_2 <h$. In all cases, the additivity of word length gives us  $m_h=m_{t_1}+m_{t_2}$ which  concludes the proof.

\end{proof}

Below we introduce the notions of mixing and sliding algebras from our paper \cite{GutK21} which provide a large class of algebras with the slowly growing length. We  present here complete definitions to prepare the reader for the corresponding sets in $k$ variables. These sets are provided in the next section in order to determine classes of algebras having steadily growing length.

Let $x,y,z$ be variables. To introduce  the following definition we need the special  sets $Q_l$ and $Q_r$ of monomials:
$$Q_l(x,y,z) = \{ x   (z   y), x   (y   z), y   (x   z), y   (z   x), x  y, y  x, x  z, z  x, y  z, z  y,x ,y , z \}$$   
contains all  monomials of degrees 1 and 2 and only  those monomials of degree 3 in which $z$ is an argument of the first multiplication and the multiplier with $z$ is the second factor of the second multiplication.  
$$Q_r(x,y,z) = \{ (x   z)   y, (z   x)   y, (y   z)   x, (z   y)   x, x  y, y  x, x  z, z  x, y  z, z  y,x ,y , z \}$$
contains all  monomials of degrees 1 and 2 and only   those monomials of degree 3, in which $z$ is an argument of the first multiplication and the multiplier with $z$ is the first factor of the second multiplication.

\begin{definition}\label{def_2}\cite[Definition 3.1]{GutK21}
	Let $\A$ be an $\F$-algebra  such that   at least one of the following statements holds:
	
	1.  $z   (x   y) \in \langle Q_r (x,y,z) \rangle$  for all $  x,y,z \in \A$, if $\A$ is non-unital; $z   (x   y) \in \langle Q_r (x,y,z), 1 \rangle$ for all $  x,y,z \in \A$, if $\A$ is unital.
	
	2.  $(x   y)   z \in \langle Q_l (x,y,z) \rangle$ for all $  x,y,z \in \A$, if $\A$ is non-unital; $(x   y)   z \in \langle Q_l (x,y,z), 1 \rangle$ for all $  x,y,z \in \A$, if $\A$ is unital.

Then we call $\A$ a {\em sliding} algebra. 
\end{definition}

To introduce the next class of algebras we need the monomial set:  $P(x,y,z)=$ $=Q_l(x,y,z) \cup Q_r(x,y,z) =$  $$= \left \{ \begin{matrix} (x   z)   y, (z   x)   y, (y z)   x, (z   y)   x,  x   (z   y), x   (y   z), y   (x   z), y   (z   x),\\ x  y,  y  x, x  z, z  x, y  z, z  y,x ,y , z \end{matrix} \right \}, $$
i.e. we consider those monomials of degree 3 that have $z$ inside the brackets.

\begin{definition}\label{def_1}\cite[Definition 3.2]{GutK21}
	Let $\A$ be an $\F$-algebra  such that for all $  x,y,z \in \A$ it holds that $(x   y)  z,\ z    (x   y)  \in  \langle P(x,y,z), 1 \rangle$ if $\A$ is unital, and  $(x   y)  z, \ z    (x   y)  \in  \langle P(x,y,z)  \rangle$ if $\A$ is non-unital.	
	Then we call $\A$ a {\em mixing} algebra. 
\end{definition}

\begin{theorem}\label{th_sg} {\rm \cite[Theorem 3.6]{GutK21}}
	The length of a mixing or a sliding algebra $\A$ of dimension $d \ge 2$ is less than or equal to~$d$.
\end{theorem}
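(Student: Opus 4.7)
The plan is to bound $l(\A)$ via the characteristic sequence. Fix a generating set $\SS$ of $\A$ with characteristic sequence $(m_1,\ldots,m_d)$; by Lemma \ref{N=n} we have $m_d = l(\SS)$, and directly from the construction of the characteristic sequence, $m_1 \le 1$. It therefore suffices to show that consecutive terms of the characteristic sequence differ by at most one, as this will yield $m_d \le m_1 + (d-1) \le d$.

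Assume for contradiction that $m_h - m_{h-1} \ge 2$ for some $h$. By Proposition \ref{cor_core} Item 2, no irreducible word has length in $\{m_{h-1}+1,\ldots,m_h-1\}$; in particular there is no irreducible word of length $m_h - 1$. Let $w$ be an irreducible word of length $m_h$ (Proposition \ref{cor_core} Item 1) and factor $w = w_1 w_2$ with $w_1,w_2$ irreducible of positive lengths (Lemma \ref{lem_1}). Corollary \ref{cor_sum} combined with $m_{h-1} \le m_h - 2$ forces both $l(w_1), l(w_2) \ge 2$. I will derive the contradiction $w \in \L_{m_h-1}(\SS)$ by iteratively applying the defining identity of the class.

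In the $Q_l$-sliding case, write $w_1 = a_1 a_2$ and apply $(a_1 a_2) w_2 \in \langle Q_l(a_1,a_2,w_2), 1\rangle$. The quadratic and linear monomials in $Q_l$ (and the possible constant $1$) contribute terms lying in $\L_{m_h-1}$, while each cubic monomial takes the form $a_i \cdot T$ with $l(a_i) < l(w_1)$. A straightforward induction on the length of the top-level left factor rewrites $w$, modulo $\L_{m_h-1}$, as a linear combination of products $g \cdot v$ with $g \in \SS$ and $l(v) = m_h - 1$. Because no irreducible word has length $m_h - 1$, each such $v$ lies in $\L_{m_h-2}$, whence $g \cdot v \in \L_{m_h-1}$, and hence $w \in \L_{m_h-1}$, the desired contradiction. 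The $Q_r$-sliding case is symmetric, reducing the top-level right factor via $z(xy) \in \langle Q_r,1\rangle$.

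The main obstacle is the mixing case: the identities $(xy)z,\ z(xy) \in \langle P,1\rangle$ produce cubic terms of both $Q_l$-type $a_i \cdot T$ and $Q_r$-type $T \cdot a_i$, so neither the top-level left nor the top-level right factor length serves directly as a monotone inductive parameter. I would overcome this by inducting on $\mu(w) = \min(l(w_1), l(w_2))$. When $\mu(w) \ge 2$, applying the mixing identity yields cubic terms whose distinguished factor $a_i$ satisfies $l(a_i) < l(w_1)$, so $\mu$ strictly drops on every cubic output, whether of $Q_l$- or $Q_r$-type. The base case $\mu(w) = 1$ exhibits $w$ directly as $g \cdot v$ or $v \cdot g$ with $g \in \SS$, and the same argument via $v \in \L_{m_h-2}$ closes the loop.
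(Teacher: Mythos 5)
Your proposal is correct and follows essentially the same route as the paper: the paper imports Theorem \ref{th_sg} from \cite{GutK21} but proves its $k$-variable generalizations (Lemmas \ref{lem_stedstep} and \ref{lem_stedstepZ}) by exactly your argument — bounding consecutive gaps in the characteristic sequence, with the sliding case controlled by the length of the relevant top-level factor and the mixing case by an extremal/inductive argument on $\min(l(w_1),l(w_2))$ (the paper's $s(w)$, your $\mu(w)$). The only point to make explicit is that in the mixing case the identity must be applied to decompose the factor \emph{achieving} the minimum, so that $l(a_i)<\mu(w)$ indeed forces $\mu$ to drop.
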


\section{$k$-sliding and $k$-mixing algebras}

In this section we plan to generalize Definitions \ref{def_2} and \ref{def_1} for $k$ variables.  To do this we need analogs of the sets $Q_l$ and $Q_r$ and to  introduce  them we fix the following notations.

Let  $Z=\{z_0, z_1, \ldots z_k\}$ be a certain set of variables, and $\Sigma = (z_0,\ldots, z_k)$ be  the ordered sequence of the variables from $Z$. We denote $Z_0=\{z_1, \ldots z_k\}\subset Z$. Let $T\subseteq Z$ be a certain subset. Recall that $|T|$ as usual denotes the cardinality of the set $T$. Then we define the following monomial sets.

\begin{definition} 
\begin{itemize}
\item $W(
T) $ is the set of all multilinear words in variables  from $T$ of the length $|T|$. We set $W(\emptyset)=\{1\}$ in the  unital case and  $ W(\emptyset)=\emptyset$ in the  non-unital case. 
\item 
$D(T)=\bigcup\limits_{T'\subseteq T} W(T')$ is the set of all multilinear words in variables  from~$T$. 

\item $D'(T) = D(T)  \setminus W (T )$ is the set of all multilinear words in   $T$ of the length strictly lesser than~$|T|$. Note that $D'(T)=\bigcup\limits_{T'\subsetneqq T} W(T')$.

\item $D_0 (\Sigma) = D (Z) \setminus \Bigl( z_0 W (Z_0) \cup W (Z_0)  z_0\Bigr)$ is the set of all multilinear words in   $Z$ except the words of  the length $k+1$ with $z_0$ being a factor of the last multiplication.

\item $D_l(\Sigma ) = \Bigl(\bigcup\limits_{\begin{smallmatrix}T' \subset Z_0 \\ T' \neq \emptyset \end{smallmatrix} } \Bigl\{ w_1w_2\vert w_1\in W(T'), w_2\in W(Z\setminus T')\Bigr\}\Bigr) \cup D'(Z)$ is the set of all multilinear words in $Z$ except the words of the length $k+1$ having $z_0$  in  the first factor of the last multiplication.

\item $D_r(\Sigma ) = \Bigl(\bigcup\limits_{\begin{smallmatrix}T' \subset Z_0 \\ T' \neq \emptyset \end{smallmatrix} } \Bigl\{ w_1w_2\vert w_1\in W(Z\setminus T'), w_2\in W(T') \Bigr\} \Bigr) \cup D'(Z)$ is the set of all multilinear words in  $Z$ except the words of the length $k+1$ having $z_0$  in  the second factor of the last multiplication.
\end{itemize}
\end{definition}

Now we are ready to define the classes of algebras which are crucial for our study.

\begin{definition}\label{def_2'}
Let $\A$ be an $\F$-algebra  such that   at least one of the following statements holds:
	
	1. $ W (\{ y_1, \ldots, y_k \}) x \subset \langle D_l (x,y_1,\ldots,y_k) \rangle$,

	2. $ x W (\{ y_1, \ldots, y_k \})  \subset \langle D_r(x,y_1,\ldots,y_k) \rangle$.

Then we call $\A$ a {\em $k$-sliding} algebra. 
\end{definition}

\begin{definition}\label{def_1'}
	Let $\A$ be an $\F$-algebra  such that for all $  x,y_1,\ldots,y_k \in \A$ it holds that $x W (\{ y_1, \ldots, y_k \}) \cup W (\{ y_1, \ldots, y_k \})  x \subset \langle  D_0 ( x,y_1 \ldots, y_k )    \rangle $.
	Then we call $\A$ a {\em $k$-mixing} algebra.  
\end{definition}

Let us show that these notions generalizes the notions introduced in~\cite{GutK21}.

\begin{proposition} 
Let $\A$ be a finite dimensional $\F$-algebra.

1. $\A$ is mixing if and only if $\A$ is  $2$-mixing.

2. $\A$ is sliding if and only if $\A$ is  $2$-sliding.
\end{proposition}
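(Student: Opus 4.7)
The plan is to verify the identification of the sets $D_l$, $D_r$, $D_0$ at $k=2$ with the sets $Q_l$, $Q_r$, $P$ of Section~2, after which both equivalences become immediate from the literal form of the conditions.

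Specializing to $k=2$, so that $Z=\{z_0,z_1,z_2\}$ and $Z_0=\{z_1,z_2\}$, I would write $x:=z_0$, $y_1:=z_1$, $y_2:=z_2$ and enumerate: $W(\{y_1,y_2\})=\{y_1y_2,\,y_2y_1\}$, and there are exactly $12$ multilinear words of length $3$ in $\{x,y_1,y_2\}$. Then I would compute the length-$3$ portion of each of $D_l(x,y_1,y_2)$, $D_r(x,y_1,y_2)$, $D_0(x,y_1,y_2)$ directly from its definition. For instance, the length-$3$ part of $D_l$ arises from the choices $T'=\{y_1\}$ and $T'=\{y_2\}$, giving the four words $y_1(y_2x),\,y_1(xy_2),\,y_2(y_1x),\,y_2(xy_1)$; after applying the relabeling $(x,y_1,y_2)\mapsto(z,x,y)$ this is exactly the degree-$3$ part of $Q_l(x,y,z)$. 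An analogous direct comparison shows that $D_r$ at $k=2$ matches $Q_r$, and that $D_0$ at $k=2$ matches $P=Q_l\cup Q_r$ (the 8 length-$3$ words in $D_0$ are precisely the multilinear words of length $3$ with $x$ strictly inside one of the two factors of the last multiplication). The degree-$\le 2$ parts on the two sides agree automatically, being the full list of multilinear monomials in three variables of length $\le 2$, with $1$ included in the unital case via $W(\emptyset)\subseteq D'(Z)$.

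Once the sets are matched, the conditions translate tautologically. The $2$-sliding condition~$1$ reads $(y_1y_2)x,\,(y_2y_1)x\in\langle D_l(x,y_1,y_2)\rangle$ for all $x,y_1,y_2\in\A$; via the identification this is the original sliding condition~$2$, namely $(xy)z\in\langle Q_l(x,y,z)\rangle$ for all $x,y,z$, where the two words $y_1y_2$ and $y_2y_1$ correspond to the two orderings $xy,\,yx$ coming from the original universal quantifier combined with the visible swap-symmetry $Q_l(x,y,z)=Q_l(y,x,z)$. The same matching handles $2$-sliding condition~$2$ versus original sliding condition~$1$ (through $D_r$ and $Q_r$) and the $2$-mixing condition versus the original mixing condition (through $D_0$ and $P$), yielding both equivalences.

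The proposition is essentially a verification that the new $k$-variable formalism specializes correctly at $k=2$, so I anticipate no obstacle of substance. The main care required is in tracking the variable relabeling and in recording the swap-symmetries of $Q_l$, $Q_r$, $P$ in their first two arguments, both of which are routine enumerations read off from the explicit lists in Section~2.
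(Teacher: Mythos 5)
Your proposal is correct and follows essentially the same route as the paper: both arguments reduce to checking that $D_l$, $D_r$, $D_0$ specialize at $k=2$ (after the relabeling $(x,y_1,y_2)\mapsto(z,x,y)$) to $Q_l$, $Q_r$, $P$, and then invoke the swap-symmetry of these sets in their first two arguments to translate the quantified conditions back and forth. The paper writes out the two implications of each equivalence separately and treats the unital case by the parallel remark you make about $W(\emptyset)$, but the substance is identical.
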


\begin{proof}
Below we consider only non-unital case. The  unital case can be considered in a similar way.

1. 
Assume the algebra $\A$ is mixing. Consider arbitrary $x,y_1,y_2 \in \A$. By Definition \ref{def_1} it holds that  $(y_1   y_2)  x, \ x    (y_1   y_2)  \in  \langle P(y_1,y_2,x)  \rangle$  and  $(y_2   y_1)  x, \ x    (y_2   y_1)  \in  \langle P(y_2,y_1,x) \rangle$. Since $$x W (\{ y_1, y_2 \}) \cup W (\{ y_1, y_2 \})  x = \{(y_1   y_2)  x,  x    (y_1   y_2), (y_2   y_1)  x,x    (y_2   y_1)  \}, $$ $P(y_2,y_1,x) = P(y_1,y_2,x)$ and $P(y_1,y_2,x) = D_0(x,y_1,y_2)$, we have $$x W (\{ y_1, y_2 \}) \cup W (\{ y_1, y_2 \})  x \subset \langle  D_0 ( x,y_1 y_2 )    \rangle ,$$ i.e. $\A$ is $2$-mixing.

Now let $\A$ be $2$-mixing. Consider arbitrary $x,y,z \in \A$. By Definition \ref{def_1'} it holds that  $z W (\{ x,y \}) \cup W (\{x,y \}) z \subset \langle  D_0 (\{ z,x,y\})    \rangle $. Since $\{(x   y)  z, \ z    (x   y)\} \subseteq  z W (\{ x,y \}) \cup W (\{x,y \}) z$ and $D_0 ( z,x,y)  = P(x,y,z)$, we have  $\{(x   y)  z, \ z    (x   y) \} \subseteq  \langle P(x,y,z)  \rangle$, i.e. $\A$ is mixing.

2. Assume $\A$ is sliding. If Item 1 of Definition \ref{def_2} holds for $\A$, then for arbitrary $x,y_1,y_2 \in \A$ we have $x   (y_1   y_2) \in \langle Q_r (y_1,y_2,x) \rangle$  and   $x   (y_2   y_1) \in \langle Q_r (y_2,y_1,x) \rangle$. Since  $x W (\{ y_1, y_2 \}) = \{( x (y_1   y_2),  x (y_2   y_1)  \} $,  $$Q_r(y_2,y_1,x) = Q_r(y_1,y_2,x), \mbox{ and  } Q_r(y_1,y_2,x) = D_r(x,y_1,y_2)$$ this means $x W (\{ y_1, y_2\})  \subset \langle D_r (x,y_1,y_2) \rangle$, i.e. $\A$ is $2$-sliding. The case Item 1 of Definition \ref{def_2} holds for the algebra $\A$ is similar.

If $\A$ is $2$-sliding and Item 2 of Definition \ref{def_2'} holds for $\A$, then we consider arbitrary $x,y,z \in \A$. We have $ z W (\{ x,y \})  \subset \langle D_r(z,x,y) \rangle$. Since $z  (x   y) \in z W (\{ x,y \})$ and $D_r ( z,x,y)  = Q_r(x,y,z)$, this means  $z   (x   y) \in \langle Q_r (x,y,z) \rangle$, i.e. $\A$ is sliding. The other case is similar.

\end{proof}

For a given element of a $k$-mixing algebra $$w \in x W (\{ y_1, \ldots, y_k \}) \cup   W (\{ y_1, \ldots, y_k \})  x$$ by $R(w)$ we denote the set of all words from $W(\{x,y_1,\ldots,y_k\}) $ (i.e. monomials of degree $k+1$ in variables $x, y_1,\ldots,y_k$), which are included with non-zero coefficients in at least one of the representations of $w$ as a linear combination of elements of $D_0 ( x,y_1 \ldots, y_k )$.

The following lemma is a key tool to prove the linearity of the growth length.

\begin{lemma}\label{lem_stedstep}
Let $\A$ be a $k$-mixing $\F$-algebra ($k\ge 2$), $\SS$ be its generating set and $M=(m_1,\ldots,m_d)$ be a characteristic sequence of $\SS$.  Then $m_{j+1}-m_j  \le k-1$ for all $j=1,\ldots,d-1$.  
\end{lemma}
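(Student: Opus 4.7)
The plan is to argue by contradiction. Suppose that $m_{j+1}-m_j\ge k$ for some $j$, and set $a=m_j$, $b=m_{j+1}$, so that $b\ge a+k$. Since the characteristic sequence contains no value strictly between $a$ and $b$, the equalities $\L_a=\L_{a+1}=\cdots=\L_{b-1}$ hold, and therefore every word in $\SS$ of length in $[a,b-1]$ already lies in $\L_a$. By Proposition~\ref{cor_core}(1) there is an irreducible word $w$ of length $b$, and it will suffice to prove the sub-claim that \emph{every} word $v$ in $\SS$ of length $b$ belongs to $\L_a$, since this contradicts the irreducibility of $w$.

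I would establish the sub-claim by strong induction on $\mu(v):=\min(l(\alpha),l(\beta))$, where $v=\alpha\beta$ is the root decomposition of $v$. For the base case $\mu(v)\le k-1$, assume $l(\beta)\le k-1$ (the case $l(\alpha)\le k-1$ is symmetric). Then $l(\alpha)\ge a+1$, and the plateau puts $\alpha\in\L_{l(\alpha)}=\L_a$, so one can expand $\alpha=\sum_p c_p\, p$ with $l(p)\le a$. Multiplying on the right by $\beta$ yields $v=\sum_p c_p(p\beta)$, where each $p\beta$ has length at most $a+(k-1)=b-1$ and so lies in $\L_{b-1}=\L_a$.

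For the inductive step $\mu(v)\ge k$, I would decompose the smaller root factor, say $\beta$, by cutting its binary tree into exactly $k$ subtrees $y_1,\ldots,y_k$, so that $\beta=W(y_1,\ldots,y_k)$ for some multilinear word $W$ of length $k$. Applying the $k$-mixing hypothesis to $v=\alpha\cdot W(y_1,\ldots,y_k)$ expresses $v$ as a linear combination of substituted elements of $D_0(\alpha,y_1,\ldots,y_k)$. Any substituted multilinear word of variable-length strictly less than $k+1$ omits at least one variable of positive length, so its total length is $<b$ and it lies in $\L_a$. Each remaining term has length exactly $b$ and, by definition of $D_0$, factors at its root as $u_1u_2$ with $\alpha$ strictly inside one of $u_1,u_2$; in that case the other factor is a multilinear word in a nonempty proper subset of $\{y_1,\ldots,y_k\}$, so its length is at most $l(\beta)-1$. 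Consequently $\mu(u_1u_2)<l(\beta)=\mu(v)$, and the inductive hypothesis delivers $u_1u_2\in\L_a$. If instead $l(\alpha)<l(\beta)$, one uses the $W\cdot x$ half of Definition~\ref{def_1'} on a $k$-decomposition of $\alpha$.

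The main technical point, and the step I expect to demand the most care, is that the inductive parameter $\mu$ strictly decreases at every application of $k$-mixing. This forces the choice of always decomposing the \emph{smaller} root factor: if one instead decomposed the larger factor, the ``pure'' side in a resulting Type~2 term could be longer than the original smaller factor, and the induction would stall. With this symmetric handling the induction is well-founded, terminates at the base case $\mu<k$, and yields the sub-claim, thereby completing the contradiction.
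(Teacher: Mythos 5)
Your proof is correct and follows essentially the same route as the paper's: both arguments cut the smaller factor of the last multiplication into exactly $k$ positive-length pieces, apply the $k$-mixing relation, and observe that every surviving degree-$(k+1)$ term either drops in total length or has a strictly smaller minimal root-factor length. The only difference is packaging — the paper runs a minimal-counterexample argument over irreducible words (your base case $\mu\le k-1$ appears there as the claim $s(w)\ge k$, proved via Lemma~\ref{lem_1} and Proposition~\ref{cor_core}), while you run a strong induction on $\mu$ over all words of length $b$ using the plateau $\L_a=\L_{a+1}=\cdots=\L_{b-1}$; the two formulations are equivalent.
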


\begin{proof}
	Assume the contrary. Let $\A$ be a $k$-mixing algebra, $\SS$ be its generating set, and assume that there exists $j$, $1\le j\le d-1$ such that the inequality $m_{j+1}-m_j  \le k-1$ does not hold. Let $p$ be the smallest index such that $m_{p+1} - m_p \ge k$. 
	
	Consider a word $w$ of length at least two. It is equal to a product $w' \cdot w''$, where $w'$ and $w''$ have non-zero lengths. We denote $s(w) = \min(l(w'),l(w''))$.

	1. Consider an irreducible word $w$ in $\SS$ of length $m_{p+1}$. Then $s(w)\ge k$. Indeed, if $s(w)\le k-1$ then  by Lemma \ref{lem_1} the word $w$ is a product of irreducible words of lengths $s(w)$ and $m_{p+1}-s(w)$. Hence by Proposition \ref{cor_core}, Item 2, there is an element equal to $m_{p+1} -s(w)$ in the characteristic sequence $M$. This is impossible, since $M$ is non-decreasing and $m_p < m_{p+1}-k+1\le m_{p+1}-s(w) $ by the assumptions.

	2.  Let us choose such an irreducible word $w_0$ of length $m_{p+1}$ that $s(w_0)$ is the smallest.  If there are several such words, we take any one of them. The chosen word is a product of two irreducible words, $w'_0$ and $w''_0$, such that $l(w'_0)=s(w_0)$. Thus we have the following 2 cases:
	
	Case 1. $w_0=w'_0 \cdot w''_0$. By Item 1 of the proof $s(w_0) \ge k$ holds. This means that $w'_0$ is a product of exactly $k$ words of positive length. Let us denote them as $w'_1,\ldots w'_k$. Consider the set $R (w'_0 \cdot w''_0$) chosen with respect to the $D_0(w''_0, w'_1,\ldots w'_k )$. At least one element $r(w)$ of this set must be an irreducible word, as words from $D_0(w''_0, w'_1,\ldots w'_k) \setminus W (\{ w''_0,w'_1, \ldots, w'_k \}) $ have length strictly less than $l(w_0)$. However, by the definition of $D_0$, $r(w)$   is a product of two words, one in $W (\{w'_i, i \in I\})$ and the other in $W (\{w''_0\} \cup \{w'_i, i \in \{1,\ldots,k \} \setminus I )\})$ for some non-empty $I \subset  \{1,\ldots,k \} $. The length of $r(w)$ is equal to $l(w_0)$, while $s(r(w))$ is equal to the length of the element in $W (\{w'_i, i \in I)$, which is strictly less than $l(w'_0) = s(w_0)$. This contradicts to the choice of $w_0$. Thus the initial assumption is false, i.e. $k$-mixing algebra cannot have a generating set with such a characteristic sequence that the difference between neighboring element is greater than~$k-1$.
	
	Case 2.  $w_0=w''_0 \cdot w'_0$.  We obtain the same contradiction similarly, considering an irreducible element of  $R (w''_0 \cdot w'_0$) chosen with respect to the  set $D_0(w''_0, w'_1,\ldots w'_k )$
\end{proof}

For a given element of a $k$-sliding algebra $w \in  W (\{ y_1, \ldots, y_k \})  x$ (or $w \in W (\{ y_1, \ldots, y_k \})  x$) by $U_l(w)$ (or $U_r(w)$) we denote the set of all words from $W(\{x,y_1,\ldots,y_k\}) $  which are included with non-zero coefficients in at least one representation of $w$ as a linear combination of elements of   $ D_l (x,y_1,\ldots,y_k) $ (or $  D_r (x,y_1,\ldots,y_k) $).

\begin{lemma}\label{lem_stedstepZ}
Let $\A$ be a $k$-sliding $\F$-algebra ($k\ge 2$), $\SS$ be its generating set and $M=(m_1,\ldots,m_d)$ be a characteristic sequence of $\SS$.  Then $m_{j+1}-m_j  \le k-1$ for all $j=1,\ldots,d-1$.  
\end{lemma}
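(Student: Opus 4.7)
The plan is to mirror the proof of Lemma \ref{lem_stedstep}, replacing the symmetric measure $s(w)=\min(l(w'),l(w''))$ with a one-sided measure matching the asymmetry of $k$-sliding: since $\A$ is assumed to satisfy only one of Items 1 or 2 of Definition \ref{def_2'}, I cannot control both factors simultaneously. Without loss of generality I assume Item 1 holds, with the case of Item 2 handled by the mirror argument using the right factor in place of the left. Supposing for contradiction that $m_{j+1}-m_j\ge k$ for some $j$, I let $p$ be the smallest such index, and for a word $w=w'\cdot w''$ of length at least two (with $w'$, $w''$ irreducible by Lemma \ref{lem_1}) I set $s_L(w):=l(w')$.

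First I will repeat Step 1 of the proof of Lemma \ref{lem_stedstep} to establish that every irreducible word $w$ of length $m_{p+1}$ satisfies $l(w'),l(w'')\ge k$. Indeed, by Proposition \ref{cor_core}, Item 2, both $l(w')$ and $l(w'')$ belong to $M$ and are strictly less than $m_{p+1}$; by the minimality of $p$ and the monotonicity of $M$ they are both at most $m_p$. If $l(w')\le k-1$, then $l(w'')\ge m_{p+1}-k+1\ge m_p+1$, contradicting $l(w'')\le m_p$. Hence $s_L(w)\ge k$ for every such $w$.

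Next I pick an irreducible word $w_0$ of length $m_{p+1}$ minimizing $s_L(w_0)=l(w'_0)$. Iterating Lemma \ref{lem_1}, I can display $w'_0$ as a multilinear arrangement of $k$ irreducible subwords $w'_1,\ldots,w'_k$ of positive length, so that $w'_0\in W(\{w'_1,\ldots,w'_k\})$ and consequently $w_0\in W(\{w'_1,\ldots,w'_k\})\cdot w''_0$. Item 1 of Definition \ref{def_2'}, applied with $x=w''_0$ and $y_i=w'_i$, then yields $w_0\in \langle D_l(w''_0,w'_1,\ldots,w'_k)\rangle$. In an expansion $w_0=\sum_i\alpha_i d_i$ with $d_i\in D_l$, the short contributions coming from $D'(Z)$ evaluate to words of length strictly less than $m_{p+1}$ and lie in $L_{m_{p+1}-1}$; since $w_0$ itself is irreducible of length $m_{p+1}$, at least one length-$(k+1)$ summand $d$ must be irreducible. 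By the definition of $D_l$, such a $d$ has the form $d=v_1\cdot v_2$ with $v_1\in W(T')$ for a non-empty proper $T'\subsetneq\{w'_1,\ldots,w'_k\}$ and with $v_2$ containing $w''_0$. Then $s_L(d)=l(v_1)=\sum_{i\in T'}l(w'_i)<\sum_{i=1}^{k}l(w'_i)=l(w'_0)=s_L(w_0)$, contradicting the minimality of $s_L(w_0)$.

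The main subtle point compared with Lemma \ref{lem_stedstep} is the correct choice of measure. With the two-sided $s(w)$ used there, an expansion term $d=v_1v_2$ could have its smaller side equal either to the factor free of $z_0$ (giving a reduction) or to the factor containing $w''_0$ (giving no reduction). In the $k$-mixing setting $D_0$ excludes $z_0$ from being alone on either side of the last multiplication, so the symmetric $s$ works. Here $D_l$ excludes this only on the left, forcing the $z_0$-free factor to be the left one; the one-sided measure $s_L$ is precisely what aligns the minimization argument with this asymmetry.
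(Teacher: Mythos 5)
Your proof is correct and takes essentially the same route as the paper's: the paper writes out the Item~2 / $D_r$ case with the right-factor measure $l_r(w)=l(w'')$ and declares Item~1 symmetric, whereas you write out the Item~1 / $D_l$ case with the left-factor measure $s_L(w)=l(w')$ --- exact mirror images of one another, with the same minimization-and-contradiction via the full-length irreducible summand of the $D_l$ (resp.\ $D_r$) expansion. Your extra observations (that \emph{both} factors of an irreducible word of length $m_{p+1}$ have length at least $k$, and the closing remark on why the one-sided measure is forced by the asymmetry of $D_l$) are sound but do not change the argument.
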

\begin{proof}

	Assume the contrary: let $\A$ be a $k$-sliding algebra satisfying Item 2 of Definition \ref{def_2'} (the case of Item 1 can be proven similarly) and let $\SS$ be a generating set of $\A$ such that for its characteristic sequence $M$ the inequality $m_{j+1}-m_j  \le k-1$ does not hold for all $j=1,\ldots,d-1$. Let $k$ be the smallest index such that $m_{p+1} - m_p \ge k$. 

	Consider a word $w$ of length at least two. It is equal to a product $w' \cdot w''$, where $w'$ and $w''$ have non-zero lengths. We denote $l_r(w) =l(w'')$.

	Let us choose such an irreducible word $w_0$ of length $m_{p+1}$ in $\SS$ that  $l_r(w)$ is minimal (if there are multiple possible candidates, we can choose one at random). By Lemma \ref{lem_1} $w_0$ is equal to $w'_0 \cdot w''_0$, where both factors have non-zero length.
	
	1. $l_r(w_0)=l(w''_0)\le k-1 $,  cannot hold.  This would mean that $l(w'_0)=m_{p+1}-l(w''_0)$ which by Proposition \ref{cor_core}, Item 2 would mean that there is an element of characteristic sequence equal to $m_{p+1} -l(w''_0)$, and that is impossible: $M$ is non-decreasing and $m_p < m_{p+1}-k+1 \le m_{p+1} -l(w''_0)$.
	
	2. If $l(w''_0) >k-1$,  $w''_0$ is equal to a product of exactly $k$ words $w''_1, \ldots, w''_k$ of positive length. Consider the set  $D_r(w'_0, w''_1,\ldots w''_k )$. As all of the elements of $D_r(w'_0, w''_1,\ldots w''_k ) \setminus W (\{ w'_0,w''_1, \ldots, w''_k \}) $ have lengths strictly less than $l(w_0)$, at least  one element of $U_r(w_0)$ must be irreducilble, since otherwise $w_0$ is a linear combination of shorter words or reducible words of the same lengths, i.e. reducible as well. Let us denote this element by $u(w_0)$. Note that $l(u(w_0)) = l(w_0)$, while the second factor in the last multiplication of $u(w_0)$ has length strictly less than $l(w''_0)$, which contradicts the choice of $w_0$. Thus the initial assumption is false.
\end{proof}

\begin{theorem}\label{th_stgr}
Let $\A$, $\dim \A=d\ge 2$,  be a $k$-mixing or a $k$-sliding algebra, $k \ge 2$. Then the length of $\A$ has steady growth and  $(k-1)$ is its u-velocity. 
\end{theorem}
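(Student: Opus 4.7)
My plan is to use the two preceding lemmas together with a short telescoping calculation; essentially all of the real work has already been done in Lemma~\ref{lem_stedstep} and Lemma~\ref{lem_stedstepZ}.

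First I would fix an arbitrary generating set $\SS$ of $\A$ and let $M = (m_1, m_2, \ldots, m_d)$ be its characteristic sequence. By Lemma~\ref{N=n}, $M$ has exactly $d = \dim \A$ terms and $m_d = l(\SS)$. By Lemma~\ref{lem_stedstep} in the $k$-mixing case, or Lemma~\ref{lem_stedstepZ} in the $k$-sliding case, consecutive differences satisfy $m_{j+1} - m_j \le k-1$ for every $j = 1, \ldots, d-1$. Telescoping these inequalities yields
\[
m_d \;\le\; m_1 + (d-1)(k-1).
\]
Inspecting Definition~\ref{CharSeqB}, we have $m_1 = 0$ when $\A$ is unital (since $s_0 = \dim \L_0(\SS) = 1$) and $m_1 = 1$ when $\A$ is non-unital, so in either case $m_1 \le 1$. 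Therefore
\[
l(\SS) \;=\; m_d \;\le\; 1 + (d-1)(k-1) \;=\; (k-1)\dim \A - (k-2).
\]
Since this estimate holds for every generating set $\SS$, passing to the maximum over $\SS$ gives $l(\A) \le (k-1)\dim \A + (2-k)$, which exhibits the steady growth property with u-velocity $c = k-1$ and additive constant $b = 2-k$.

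I do not anticipate any essential obstacle: the deep structural content sits inside Lemmas~\ref{lem_stedstep} and~\ref{lem_stedstepZ}, and the theorem reduces to telescoping plus a sanity check on the initial term $m_1$. The only minor point of care is the unital versus non-unital distinction for $m_1$, but this shifts only the additive constant $b$ and leaves the u-velocity equal to $k-1$, which is precisely what is claimed.
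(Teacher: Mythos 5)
Your proposal is correct and follows essentially the same route as the paper: invoke Lemma~\ref{lem_stedstep} (resp.\ Lemma~\ref{lem_stedstepZ}), telescope the bound $m_{j+1}-m_j\le k-1$, and use $m_1\le 1$ to obtain $l(\SS)\le 1+(d-1)(k-1)$. The only cosmetic difference is that the paper further relaxes this to $d(k-1)$ (taking $b=0$), whereas you keep the sharper additive constant $b=2-k$; both establish u-velocity $k-1$.
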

\begin{proof}
Follows directly from Lemma \ref{lem_stedstep} for $k$-mixing algebras or from Lemma \ref{lem_stedstepZ} for $k$-sliding algebras. Namely, for a generating set $\SS$ of $\A$ with $l(\SS)=l(\A)$ and characteristic sequence $(m_1,\ldots,m_d)$ of $\SS$ we have $m_1\le 1$ and $l(\SS) =m_d \le m_{d-1} + k-1 \le \ldots \le m_1 + (d-1)(k-1) \le 1+ (d-1)(k-1) \le d (k-1)$.
\end{proof}

\section{Examples}

Below we present two classes of algebras satisfying, respectively, $k$-sliding and $k$-mixing properties such that $k-1$ is their minimal u-velocity.  We call  them $k$-round and $k$-based algebras, respectively. Some examples of $k$-round algebras appeared in~\cite{GutK21} as examples of algebras that have linear in dimension growth of the length but are not of slowly growing length, cf.~\cite[Proposition 4.25]{GutK21}.

\begin{definition}\label{k-round}
We say that an algebra $\A$ is {\em $k$-round} ($k\ge 2$) if for all $  x,y_1,\ldots,y_k \in \A$ and for any product $v=y_1\cdots y_k$ with any placement of parentheses it   holds that $x v = 0$.
\end{definition}

\begin{proposition}
Any $k$-round algebra is $k$-sliding.
\end{proposition}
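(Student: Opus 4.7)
The plan is to verify Item~2 of Definition~\ref{def_2'} directly, exploiting the fact that the $k$-round hypothesis forces the left-hand side $x\,W(\{y_1,\ldots,y_k\})$ to collapse to $\{0\}$. Fix arbitrary $x, y_1, \ldots, y_k \in \A$ and take any word $w \in W(\{y_1,\ldots,y_k\})$. By definition, $w$ is a multilinear product of length $k$ in the $y_i$'s, i.e.\ a parenthesization of some permutation $y_{\sigma(1)} \cdots y_{\sigma(k)}$. Since the $k$-round hypothesis quantifies over all elements of $\A$, relabeling $y_i \mapsto y_{\sigma(i)}$ reduces this to a parenthesization of $y_1 \cdots y_k$, and the $k$-round condition yields $x w = 0$.

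Therefore $x\,W(\{y_1,\ldots,y_k\}) = \{0\}$. As the zero element lies in every linear span, in particular $0 \in \langle D_r(x, y_1, \ldots, y_k) \rangle$, so the inclusion
\[
x\,W(\{y_1,\ldots,y_k\}) \subset \langle D_r(x, y_1, \ldots, y_k) \rangle
\]
is trivially satisfied. This is exactly Item~2 of Definition~\ref{def_2'}, hence $\A$ is $k$-sliding. There is no real obstacle: the $k$-round condition is strictly stronger than what $k$-sliding requires on the right side, so the proof is essentially a reformulation once one observes that the only subtlety, namely covering all permutations of the $y_i$'s in addition to all parenthesizations, is handled automatically by the universal quantifier in Definition~\ref{k-round}.
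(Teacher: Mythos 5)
Your proof is correct and takes the same route as the paper, which simply states that the claim follows immediately from Item~2 of Definition~\ref{def_2'}; you merely spell out the (valid) observation that the universal quantifier in Definition~\ref{k-round} covers all permutations of the $y_i$, so $x\,W(\{y_1,\ldots,y_k\})=\{0\}$ and the required inclusion is trivial.
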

\begin{proof}
Follows immediately from Definition \ref{def_2'}, Item 2.
\end{proof}

An example of a $k$-round algebra is constructed below. In order to check polynomial identities for algebras we   use the following lemma.

\begin{lemma}\cite[Lemma 4.1]{GutK21}\label{lem_distr}
	Consider a finite-dimensional algebra $\A$ over the field $\F$, its basis $\{e_1,\ldots,e_d\}$ and multilinear function $G$ of $k$ arguments such that $G(e_{i_1}, \ldots, e_{i_k}) = 0$ for all $i_t \in \{1,\ldots,d\}$. Then for all $a_1,\ldots,a_k \in \A$ it holds that $G(a_1, \ldots, a_k) =0$.
\end{lemma}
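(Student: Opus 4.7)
The plan is to reduce the problem to the hypothesis by expanding each argument of $G$ in the given basis and then exploiting multilinearity to push the scalars and sums outside of $G$.

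First I would fix arbitrary elements $a_1,\ldots,a_k \in \A$ and, using the fact that $\{e_1,\ldots,e_d\}$ is a basis of $\A$ over $\F$, write each one uniquely as
$$a_j = \sum_{i_j=1}^{d} c_{i_j}^{(j)} e_{i_j},\quad c_{i_j}^{(j)}\in\F,\ j=1,\ldots,k.$$
Second, I would substitute these expansions into $G(a_1,\ldots,a_k)$ and invoke multilinearity slot by slot. In each slot, linearity allows a finite sum to be split and scalar factors to be pulled out of $G$. After doing this for all $k$ slots, one obtains the identity
$$G(a_1,\ldots,a_k) = \sum_{i_1=1}^{d}\cdots\sum_{i_k=1}^{d} c_{i_1}^{(1)}\cdots c_{i_k}^{(k)}\, G(e_{i_1},\ldots,e_{i_k}).$$

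Third, I would apply the hypothesis: $G(e_{i_1},\ldots,e_{i_k})=0$ for every tuple $(i_1,\ldots,i_k)\in\{1,\ldots,d\}^k$. Hence every term of the $k$-fold sum above vanishes, and so $G(a_1,\ldots,a_k)=0$, which is exactly the desired conclusion.

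There is no genuine obstacle here; the only point one might want to state carefully is that the expansion above is obtained by a straightforward induction on the number of slots in which the expansion has already been performed, each inductive step using multilinearity of $G$ in a single argument. Since $k$ is finite, this induction terminates and yields the displayed formula.
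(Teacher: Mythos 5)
Your proof is correct: expanding each argument in the basis and using multilinearity slot by slot to reduce $G(a_1,\ldots,a_k)$ to a linear combination of the values $G(e_{i_1},\ldots,e_{i_k})$ is exactly the standard argument, and the present paper simply cites this lemma from \cite{GutK21} without reproving it. There is nothing to add or correct.
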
 

\begin{example}\cite[Example 4.26]{GutK21}
Consider an algebra $\mathcal{E}_d$ with the basis $x_1,\ldots,x_d$, $d \ge k \ge 2$ and the following multiplication law:
		$$x_j x_1 = x_{j+1}, \ j=1,\ldots,k-2,$$ 
		$$x_i x_{k-1} = x_{i+1}, \ i=k-1,\ldots, d-1,$$
	with other products being zero. We have $$l(\mathcal{E}_d) \ge l(\{x_1\}) = (k-1) d - (k-2)(k-1) .$$
	
Let us prove that $\mathcal{E}_d$ satisfies Definition~\ref{k-round}. 

At first, we consider $x,y_1,\ldots,y_k \in \{x_1,\ldots, x_d\}$. A word $x v$,  where $v$ is a product of $y_1,\ldots,y_k$, is indeed zero as $v$ cannot be neither $x_1$ nor $x_{k-1}$. So, the required condition holds for the basis of $\mathcal{E}_d$. Then  by Lemma \ref{lem_distr} it is satisfied for other elements as well. 
\end{example}

\begin{proposition}
Let $k\ge 2$ be integer. The minimal u-velocity for the class of $k$-round algebras is $k-1$.

\end{proposition}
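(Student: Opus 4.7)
The plan is to establish the claim by two separate arguments corresponding to the two natural directions: an upper bound on the u-velocity and a matching lower bound.

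For the upper bound, I would simply chain together results already proved. Since every $k$-round algebra is $k$-sliding by the preceding Proposition, and since by Theorem \ref{th_stgr} any $k$-sliding algebra of dimension $d\ge 2$ satisfies $l(\A)\le (k-1)d+1$ (or the similar expression appearing in the proof), the class of $k$-round algebras has steadily growing length with u-velocity at most $k-1$. This gives one half of the statement with no additional work.

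For the lower bound, the key is to exploit the explicit family $\mathcal{E}_d$ from the preceding Example. I would argue by contradiction: suppose the class of $k$-round algebras admitted a u-velocity $c<k-1$, i.e., constants $c,b$ with $l(\A)\le c\cdot \dim \A + b$ for every $k$-round algebra $\A$. Since $\mathcal{E}_d$ is $k$-round for every $d\ge k$, applying the bound to this family together with the lower estimate
\[
l(\mathcal{E}_d)\ge (k-1)d-(k-2)(k-1)
\]
from the Example yields $(k-1)d-(k-2)(k-1)\le cd + b$, i.e.\ $(k-1-c)d\le b+(k-2)(k-1)$ for all sufficiently large $d$. Since $k-1-c>0$, letting $d\to\infty$ produces a contradiction. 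Hence no strictly smaller constant can serve as u-velocity, and together with the first part, the minimal u-velocity is exactly $k-1$.

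I do not anticipate a serious obstacle here: both ingredients (the Proposition that $k$-round implies $k$-sliding and the lower length bound on $\mathcal{E}_d$) are already available in the text, and the argument is essentially an asymptotic comparison. The only point one must be careful about is that a u-velocity is defined via an inequality valid for every member of the class, so a single infinite sub-family with length growing like $(k-1)d$ is enough to force $c\ge k-1$; picking $\mathcal{E}_d$ with $d\to\infty$ does exactly that.
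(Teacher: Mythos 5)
Your proposal is correct and follows essentially the same route as the paper: the upper bound via the preceding Proposition ($k$-round implies $k$-sliding) combined with Theorem \ref{th_stgr}, and the lower bound by applying any hypothetical bound $l(\A)\le c\dim\A+b$ with $c<k-1$ to the family $\mathcal{E}_d$ and letting $d\to\infty$ against the estimate $l(\mathcal{E}_d)\ge (k-1)d-(k-2)(k-1)$. No substantive differences.
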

\begin{proof}
Note that $k$-round algebras are $k$-sliding and by Theorem \ref{th_stgr} their u-velocity is~$k-1$.

Consider a number $c <k-1$ and an arbitrary   $b\in \R$. There exists an integer $d\ge k$ such  that $c d + b < (k-1) d - (k-2)(k-1)$. Since $\mathcal{E}_d$ belongs to the class of $k$-round algebras  and $(k-1) d - (k-2)(k-1) \le l(\mathcal{E}_d)$, this means that the inequality $l(\A) \le c \cdot \dim \A +b $ does not holds universally for  the members of this class. Since we can pick $b$ arbitrarily, this means that $c$ is not a u-velocity of the class of $k$-round algebras.

Thus, $k-1$ is indeed a minimal u-velocity of the class of $k$-round algebras.
\end{proof}

\begin{definition}\label{k-based}
We say that an algebra $\A$ is {\em $k$-based}, $k\ge 2$,  if for all $  x,y_1,\ldots,y_k \in \A$ and any fixed placement of parentheses $u v = y_1\cdots y_k$ it holds that $x (u v) = u (x v)$, $(u v) x = (u x) v$.
\end{definition}

\begin{proposition}
A $k$-based algebra is $k$-mixing.
\end{proposition}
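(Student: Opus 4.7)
The plan is to apply the $k$-based identities directly from Definition \ref{k-based} to show that each element of $x W(\{y_1,\ldots,y_k\}) \cup W(\{y_1,\ldots,y_k\})x$ is already a single element of $D_0(x, y_1, \ldots, y_k)$, so it certainly lies in $\langle D_0(x, y_1, \ldots, y_k)\rangle$.

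First I would take an arbitrary $w \in x \, W(\{y_1,\ldots,y_k\})$, so $w = x \cdot p$ where $p$ is some parenthesized product of all of $y_1, \ldots, y_k$. Since $k \ge 2$, $p$ decomposes as $p = uv$ where $u$ and $v$ are multilinear words of positive length in disjoint subsets $T', Z_0 \setminus T' \subset \{y_1,\ldots,y_k\}$. The $k$-based identity $x(uv) = u(xv)$ immediately rewrites $w$ as $u(xv)$.

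Next I would verify that $u(xv) \in D_0(x, y_1, \ldots, y_k)$. It is a multilinear word of length $k+1$, and its final multiplication has first factor $u$ and second factor $xv$. The elements removed from $D(Z)$ to form $D_0$ are exactly those of the form $x \cdot w'$ or $w' \cdot x$ with $w' \in W(\{y_1,\ldots,y_k\})$. Since $u$ contains no occurrence of $x$, and $xv$ has length at least $2$, neither the first nor the second factor of the last multiplication equals the single letter $x$. Hence $u(xv) \in D_0$, and so $w = u(xv) \in \langle D_0(x, y_1, \ldots, y_k)\rangle$. The symmetric case $w \in W(\{y_1,\ldots,y_k\}) \, x$ is handled in the same way, using $(uv)x = (ux)v$: the last multiplication of $(ux)v$ has factors $ux$ and $v$, neither of which is the letter $x$ alone, so $(ux)v \in D_0$.

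Combining the two cases yields $x W(\{y_1,\ldots,y_k\}) \cup W(\{y_1,\ldots,y_k\}) x \subset \langle D_0(x,y_1,\ldots,y_k)\rangle$, which is the condition of Definition \ref{def_1'}. There is no serious obstacle in this argument; the only point needing care is to confirm that the rewritten word really does lie in $D_0$ rather than in the excluded subset $x W(Z_0) \cup W(Z_0) x$, and this is where the hypothesis $k \ge 2$ is used, since it forces the subwords $u$ and $v$ to be nontrivial so that $x$ ends up strictly nested below the topmost multiplication.
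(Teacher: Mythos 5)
Your argument is correct and is essentially the paper's approach: the paper simply states that the claim follows immediately from Definition \ref{def_1'}, and what you have written is precisely the verification it leaves implicit. In particular, your key observation --- that after applying the $k$-based identity the letter $x$ is nested strictly below the top multiplication, so the resulting word avoids the excluded set $x\,W(Z_0)\cup W(Z_0)\,x$ and hence lies in $D_0$ --- is exactly the point that makes the implication immediate.
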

\begin{proof}
Follows immediately from Definition \ref{def_1'}.
\end{proof}

The following is an example of a $k$-based algebra. 

\begin{example} \label{Ex_X_d}
Consider the algebra $\mathcal{X}_d$ with the basis $x_1,\ldots,x_d$, $d \ge k \ge 2$ and the following multiplication law:
		$$x_1 x_j = x_{j+1}, \ j=1,\ldots,k-2,$$ 
		$$x_{k-1} x_{i} = x_{i+1}, \ i=k-1,\ldots, d-1,$$
	with other products being zero. We have $$l(\mathcal{X}_d) \ge l(\{x_1\}) = (k-1) d - (k-2)(k-1) .$$

	Let us prove that $\mathcal{X}_d$ is indeed a $k$-based algebra, namely, $\mathcal{X}_d$ satisfies Definition~\ref{k-based}. 
	
	We consider $x,y_1,\ldots,y_k \in \{x_1,\ldots, x_d\}$. If the required condition holds for the basis of $\mathcal{X}_d$, then  by Lemma \ref{lem_distr} it is satisfied for other elements as well. 

	 For the word $x (u v)$, where $u v = y_1\cdots y_k$ with a certain fixed placement of parentheses, there are three possibilities:

\begin{itemize}

	\item Assume $u = x_{j}$, $j <k-1$. This means $u v = 0$ and  $u(x v)= 0$ by the multiplication laws, from which it follows  that $ x (uv) = 0 = u (xv)$.

	\item Assume $u = x_{k-1}$. In this case the following three possibilities appear:
\begin{itemize}
	
	\item If $x = x_j$, $j <k-1$, then similarly to Case 1 both  $ x (uv)$ and $u (xv)$ are equal to zero.

	\item If $x = x_{k-1}$, then $u=x$ and the equality of  $ x (uv)$ and $u (xv)$ is trivial.

\item If $x=x_{i}$, $i > k-1$, then  $x (u v) = 0$ and  $x v= 0$ by the multiplication laws, from which follows $ x (uv) = 0 = u (xv)$. \end{itemize}

	\item Assume $u = x_i$, $i > k-1$. This means that $u v = 0$ and  $u(x v)= 0$ by the multiplication laws, from which follows $ x (uv) = 0 = u (xv)$. 

\end{itemize}

Also for the word $(u v) x$, where $u v = y_1\cdots y_k$ with a certain fixed placement of parentheses, there are three possibilities:

\begin{itemize}

	\item Assume $u = x_{j}$, $j <k-1$. This means $u v = 0$ and  $u x= 0$ by the multiplication laws, from which follows $(u v) x = 0 = (u x) v$.

	\item Assume $u = x_{k-1}$. This means $u v$ and $u x$ are either zero or some $x_i$, $i > k-1$ which leads by multiplication laws to $(u v) x = 0 = (u x) v$.

	\item Assume $u = x_{i}$, $i >k-1$. This means $u v = 0$ and  $u x= 0$ by the multiplication laws, from which follows $(u v) x = 0 = (u x) v$.

\end{itemize}
This concludes the proof that  $\mathcal{X}_d$ is indeed a $k$-based algebra.
\end{example}

\begin{proposition}
The minimal u-velocity for the class of $k$-based algebras is~${(k-1)}$.
\end{proposition}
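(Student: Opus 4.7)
The proof mirrors the one given just above for $k$-round algebras, so the plan is to assemble the two ingredients already in hand. First I would invoke the preceding proposition to conclude that every $k$-based algebra is $k$-mixing, and then apply Theorem \ref{th_stgr} to deduce $l(\mathcal{A}) \le (k-1)\dim \mathcal{A}$ for any $k$-based algebra $\mathcal{A}$ with $\dim \mathcal{A} \ge 2$. In particular, $c = k-1$ (with $b=0$) works as a u-velocity for the class of $k$-based algebras, so only the minimality is nontrivial.

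For minimality I would argue by contradiction. Suppose some $c < k-1$ were a u-velocity, so that there exists $b \in \mathbb{R}$ with $l(\mathcal{A}) \le c \cdot \dim \mathcal{A} + b$ for every $k$-based $\mathcal{A}$. Apply this to the family $\mathcal{X}_d$ from Example \ref{Ex_X_d}, which was shown there to be $k$-based for every $d \ge k$ and to satisfy the lower bound $l(\mathcal{X}_d) \ge (k-1)d - (k-2)(k-1)$. Combining these two inequalities yields
$$(k-1)d - (k-2)(k-1) \le l(\mathcal{X}_d) \le c\,d + b$$
for all $d \ge k$, which rearranges to $((k-1)-c)\,d \le b + (k-2)(k-1)$. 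Since $(k-1)-c > 0$, this fails for all sufficiently large $d$, giving the desired contradiction.

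Concluding these two steps establishes that $k-1$ is a u-velocity while no smaller constant is, so $k-1$ is the minimal u-velocity for the class of $k$-based algebras. There is no serious obstacle in the argument itself; the entire weight of the proof rests on the lower bound $l(\mathcal{X}_d) \ge (k-1)d - (k-2)(k-1)$ asserted in Example \ref{Ex_X_d}, which one verifies by tracing the powers of $x_1$ in the explicit multiplication table and checking linear independence of the resulting basis words. That verification is already folded into the construction of the example, so the proposition follows immediately.
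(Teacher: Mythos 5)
Your proposal is correct and follows essentially the same route as the paper: establish that $k$-based algebras are $k$-mixing so Theorem \ref{th_stgr} gives $k-1$ as a u-velocity, then use the family $\mathcal{X}_d$ from Example \ref{Ex_X_d} and its lower bound $l(\mathcal{X}_d) \ge (k-1)d - (k-2)(k-1)$ to rule out any $c < k-1$. The only difference is presentational (you phrase the minimality step as a contradiction, the paper exhibits a suitable $d$ directly), which is immaterial.
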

\begin{proof}
Note that $k$-based algebras are $k$-mixing and by Theorem \ref{th_stgr} $k-1$ is a u-velocity for this class.

Consider a number $c <k-1$ and an arbitrary number $b\in \R$. There exists a natural number $d\ge k$ such  that $c d + b < (k-1) d - (k-2)(k-1)$. Since $\mathcal{X}_d$ belongs to the class of $k$-based algebras and $(k-1) d - (k-2)(k-1) \le l(\mathcal{X}_d)$, this means that the inequality $l(\A) \le c \cdot \dim \A +b $ does not holds universally for the members of this class. Since we can pick $b$ arbitrarily, this means that $c$ is not a u-velocity of the class of $k$-based algebras.

Thus, $k-1$ is indeed a minimal u-velocity of the class of $k$-based algebras.
\end{proof}

The following class of algebras shows that there are algebras of  steadily growing lengths that are neither mixing nor sliding, so these classes of algebras provide a useful tool to investigate algebra length. The general question of characterization for these algebras remains open. 

\begin{definition}\label{def_vinberg}  \cite{Car}
	An algebra $\A$ is called a {\em Vinberg algebra} if $$ (xy)z-x(yz) = (xz)y-x(zy)$$ for all $x,y,z \in \A$.
\end{definition}

These algebras are sometimes called right-symmetric algebras.

\begin{example}
	Consider an algebra $\mathcal{V}_d$ with the basis $x_1,\ldots,x_d$, $d \ge 4$ and the following multiplication law:
	$$x_i x_j = x_{i+j}, i+j \le d-1,$$ 
	$$x_{d-1} x_{d-2} = x_d,$$
	with other products being zero.
	
	It is easy to see that $\mathcal{V}_d$ is neither $k$-mixing nor $k$-sliding for any  $k\in \{2,\ldots, d-2\}$. The characteristic sequence of the set $\{x_1\}$ is $(1,2,\ldots,d-2,d-1,2d-3)$ and $(2d-3)  -(d-1) = d-2$.
	
	However, $\{ \mathcal{V}_d \}$ is a class of steadily growing algebras with u-velocity $2$ since it is straightforward to check that $l(\mathcal{V}_d) = 2d-3$. 
	
\end{example}

\section{Sprout sequences}

The aim of this section is   to establish new methods to investigate  the inner structure of words in algebras which allow to compute the length. The definitions below are the first steps in this direction.

\begin{definition}
Consider an algebra $\A$, its generating set $\SS$ and a word $w$ in $\SS$. We say that a word $w'$ in $\SS$ is a {\em subword} of $w$ if it is a factor of one of the multiplication inside of $w$ or it is $w$ itself. We also say that $w'$ is a {\em proper subword} of $w$ if it is a subword which is not $w$ itself.
\end{definition}

\begin{example}
Let $\A$ be an algebra and $\SS$ be its generating set such that there exist $x,y \in \SS$ with $x\neq y$. For a word $w = (x y) (y (x x))$ in $\SS$, words $x, y, xx, xy, (y (x x)), (x y) (y (x x))$ are its subwords, while $yy$ or $yx$ are not. Note that there is no subword of length 4 in~$w$.
\end{example}

\begin{lemma}\label{lem_subduh}
Let $\A$ be an algebra, $\SS$ be its generating set and $w$ be a word in $\SS$ such that $w = u v$ where $u,v$ are also words in $\SS$ of positive length. Then the set of proper subwords of $w$ is the union of the sets of subwords of $u$ and~$v$.
\end{lemma}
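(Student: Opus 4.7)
The plan is to prove the two set inclusions separately, using the definition of subword as a factor of some multiplication occurring inside $w$ (or $w$ itself). The key observation is that in a non-associative setting every word $w$ of length at least two has a unique decomposition into its outermost product $w=u\cdot v$, so the multiplications occurring inside $w$ are exactly: the outermost multiplication $u\cdot v$, together with all multiplications occurring inside $u$ and all multiplications occurring inside $v$.

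First I would prove the inclusion $\supseteq$. Since $w = u\cdot v$, both $u$ and $v$ are factors of the outermost multiplication in $w$, hence subwords of $w$. They are proper subwords because $l(u), l(v) < l(u)+l(v) = l(w)$, as both factors have positive length. Next, any subword $w'$ of $u$ is by definition either equal to $u$ (already handled) or a factor of some multiplication occurring inside $u$. In the latter case that multiplication is also a multiplication occurring inside $w$, so $w'$ is a subword of $w$. In both cases $l(w')\le l(u)<l(w)$, hence $w'\ne w$, so $w'$ is a proper subword of $w$. The argument for subwords of $v$ is identical.

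Next I would prove the inclusion $\subseteq$. Let $w'$ be a proper subword of $w$. By definition $w'$ is a factor of some multiplication occurring inside $w$. As noted above, such a multiplication is either the outermost multiplication $u\cdot v$ itself, or a multiplication occurring inside $u$, or a multiplication occurring inside $v$. In the first case $w'$ equals $u$ or $v$ and hence is trivially a subword of $u$ or of $v$. In the second case $w'$ is a factor of a multiplication inside $u$, hence a subword of $u$; analogously in the third case $w'$ is a subword of $v$.

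The main obstacle is really only notational: the argument is a straightforward unpacking of definitions once one formalises what ``multiplication occurring inside $w$'' means, i.e. the internal nodes of the parse tree of the word $w$. One can make this fully rigorous by an easy induction on $l(w)$, but no substantive work beyond the dichotomy above is required; the statement is essentially the inductive characterisation of the subword tree of a product.
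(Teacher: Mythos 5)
Your proposal is correct and follows essentially the same argument as the paper: both directions are handled by observing that the multiplications inside $w$ are exactly the last multiplication $u\cdot v$ together with the multiplications inside $u$ and inside $v$, and then doing the obvious case split. Your write-up is a bit more explicit about the parse-tree picture, but the substance is identical to the paper's proof.
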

\begin{proof}
It is straightforward to see that a subword of $u$ or $v$ is also a subword of $w$, moreover, a proper subword, as it is shorter than~$w$.

On the other hand, a proper subword of $w$ is a factor in one of the multiplications inside of $w$. If it is a factor of the last multiplication, then it is either $u$ or $v$. Otherwise, it is a factor in one  of the multiplications inside of $u$ or $v$. In both cases it is a subword of $u$ or~$v$.
\end{proof}

\begin{definition} \label{def_sprout} Let $\A$ be an $\F$-algebra, $\SS$ be its generating set and $w$ be a word of length at least 2 in  $\SS$. We  construct inductively two sequences $(w_1, \ldots, w_r)$ and $(w_1',\ldots, w'_r)$ of  subwords  of $w$ as follows:

\begin{enumerate}

\item Set $w_1$ and $w'_1$ to be the two factors of the last multiplication in $w$ (i.e. $w = w_1 w'_1$ or $w = w'_1 w_1$) such that $l(w_1)\le l(w'_1)$. 

\item Assume $w_1, \ldots, w_{j-1}$ and $w'_1, \ldots, w'_{j-1}$ are constructed and $l(w'_{j-1}) \ge 2$.  Set $w_j$ and $w'_j$ to be the two factors of the last multiplication in $w'_{j-1}$  such that $l(w_j)\le l(w'_j)$. 

\item The process stops if $l(w'_r) = l(w_r) = 1$ for a certain $r$.
\end{enumerate}	

We call the   sequence $(w_1, \ldots, w_r)$ a {\em sprout sequence}, or just a {\em sprout}, of $w$ and the   sequence $(w'_1, \ldots, w'_r)$  a {\em supporting sequence} of~$w$.

If $(w_1, \ldots, w_r)$ is a sprout sequence, we say that $(l(w_1), \ldots, l(w_r))$ is an {\em $l$-sprout sequence}.

\end{definition}

Let us start to analyze sprouts and their properties with an example.
\begin{example}
Let $\A$ be an $\F$-algebra, $\SS$ be its generating set and $x,y,z \in \SS$. Then both sequences
$\Bigl((x y) (z y), y, z, x\Bigr)$ and 
$\Bigl(( (x z) z) y, xy, z\Bigr) $
are sprout sequences of the word $w=((x y) (z y)) (( (x z) z) y)$ and $(4,1,1,1)$ and $(4,2,1)$ are  $l$-sprout sequences of~$w$.
\end{example}

The supporting sequence has the following useful property.

\begin{lemma}\label{lem_ind}
Let $\A$ be an $\F$-algebra, $\SS$ be its generating set, $w$ be  a word of length at least 2  in $\SS$, $(w_1, \ldots, w_r)$ and $(w_1',\ldots, w'_r)$ be sprout and supporting sequences of $w$, and $j$ be an integer, $2\le j \le r$. Then the sequence $(w_j,\ldots, w_r)$ is a sprout sequence of $w'_{j-1}$.
\end{lemma}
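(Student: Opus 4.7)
The plan is to check directly that the recursive construction of Definition \ref{def_sprout} applied to $w'_{j-1}$ produces, step by step, the same pairs as the tail of the construction already performed for $w$. Nothing more is needed than a careful comparison of the two procedures, because the rule that generates the next element of a sprout depends only on the current ``larger'' subword $w'_i$ and not on how one arrived at it.

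Concretely, I would argue by induction on $i \ge 1$ that the $i$-th term of the sprout sequence of $w'_{j-1}$ equals $w_{j+i-1}$ and the $i$-th term of its supporting sequence equals $w'_{j+i-1}$. For the base case $i=1$, Item~1 of Definition \ref{def_sprout} applied to $w'_{j-1}$ declares the first pair to be the two factors of the last multiplication of $w'_{j-1}$, ordered so that the shorter one is first. On the other hand, Item~2 applied at step $j$ of the construction of the sprout of $w$ uses exactly the same rule to define $w_j$ and $w'_j$ out of $w'_{j-1}$. Hence the two prescriptions coincide.

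For the inductive step, assume the $(i{-}1)$-st pair produced by Definition \ref{def_sprout} for $w'_{j-1}$ is $(w_{j+i-2}, w'_{j+i-2})$, with $l(w'_{j+i-2})\ge 2$. Then Item~2 produces as the next pair the two factors of the last multiplication in $w'_{j+i-2}$, ordered by length. But that is exactly the same operation used at step $j+i-1$ of the construction of the sprout of $w$, which yields $(w_{j+i-1}, w'_{j+i-1})$. This closes the induction.

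It remains to check termination. The construction for $w'_{j-1}$ stops, by Item~3, at the first index $i$ for which $l(w_{j+i-1}) = l(w'_{j+i-1}) = 1$. Since the original construction for $w$ stops at index $r$ under the same condition, this first index is $i = r-j+1$. Therefore the sprout sequence of $w'_{j-1}$ is $(w_j, w_{j+1}, \ldots, w_r)$, with supporting sequence $(w'_j, \ldots, w'_r)$, as claimed. The only possible pitfall is the bookkeeping of indices and verifying that the two halting conditions are triggered at the same place, but this is immediate since the underlying words $w'_{j-1}, w'_j, \ldots, w'_r$ are literally shared between the two constructions.
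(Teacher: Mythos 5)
Your argument is correct and is simply a careful, step-by-step unfolding of what the paper dispatches with the single line ``Follows directly from Definition~\ref{def_sprout}'': the recursive rule generating each new pair depends only on the current supporting word, so the tail of the construction for $w$ coincides with the construction started from $w'_{j-1}$. Same approach, just written out explicitly.
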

\begin{proof}
Follows directly from Definition~\ref{def_sprout}.
\end{proof}

The next statement is converse to Lemma~\ref{lem_ind}.

\begin{lemma}\label{lem_seqexp}
Let $\A$ be an $\F$-algebra, $\SS$ be its generating set, $w,w'$ be  words  in $\SS$, $l(w) \ge l(w')> 0$. We denote by  $(w_1,\ldots, w_r)$  a sprout sequence of $w$. Then $(w',w_1,\ldots, w_r)$ is a sprout sequence of both $w w'$ and $w' w$. Additionally, $(l(w'),l(w_1),\ldots, l(w_r))$ is an $l$-sprout sequence for both words $w w'$ and~$w' w$.
\end{lemma}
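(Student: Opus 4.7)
The plan is to verify directly, from Definition~\ref{def_sprout}, that the pair consisting of the sequence $(w', w_1, \ldots, w_r)$ together with the auxiliary sequence $(w, w_1', \ldots, w_r')$ forms a valid sprout/supporting pair for the product $ww'$, and then to observe that the same assignment works without change for $w'w$. The $l$-sprout statement will follow at once by applying $l(\cdot)$ entry-wise.

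I would begin with the small setup remark that the hypothesis forces $l(w) \ge 2$, since $(w_1,\ldots,w_r)$ is a sprout of $w$ and sprouts are only defined for words of length at least two; together with $l(w') \ge 1$ this gives $l(ww') = l(w'w) \ge 3$, so both products genuinely admit sprout sequences. The core of the argument is then a match between two recursive constructions. For $ww'$, the last multiplication is $w\cdot w'$, whose factors are $w$ and $w'$ with $l(w') \le l(w)$ by hypothesis, so Item~1 of Definition~\ref{def_sprout} permits us to declare $w'$ the first sprout entry and $w$ the first supporting entry. Item~2, invoked because $l(w)\ge 2$, then demands that the subsequent pairs of the new sequences be produced by successively factoring the last multiplication of the previous supporting entry, starting from $w$. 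But this is precisely the recursion that produces $(w_1,\ldots,w_r)$ and $(w_1',\ldots,w_r')$ from $w$. The termination condition of Item~3 is the same in both recursions, namely $l(w_r) = l(w_r') = 1$.

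For the word $w'w$ I would give the identical argument: the last multiplication is $w'\cdot w$, still with the same pair of factors satisfying the same length inequality, so Item~1 again allows the choice of $w'$ as the first sprout entry and $w$ as the first supporting entry, after which Items~2 and~3 are word for word the same. Taking lengths of the entries of the sprout sequence then yields the claimed $l$-sprout sequence $(l(w'), l(w_1), \ldots, l(w_r))$ for both $ww'$ and $w'w$.

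I do not expect any genuine obstacle; the task is essentially notational, aligning the first step of the construction of a sprout for $ww'$ (resp.\ $w'w$) with the already-given construction for $w$. The one point that wants explicit care is observing that Item~1 of Definition~\ref{def_sprout} is applied only at the very first step, while all further steps are governed by Item~2; once this is spelled out, the lemma follows by direct comparison of the two recursions and no induction on $r$ is really needed (though one could also phrase the match inductively via Lemma~\ref{lem_ind} if preferred).
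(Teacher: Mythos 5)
Your proof is correct and matches the paper's approach: the paper simply states that the lemma follows directly from Definition~\ref{def_sprout}, and your write-up is an explicit unwinding of exactly that verification (first step by Item~1, the rest by matching the recursion of Item~2 on the supporting entry $w$, termination by Item~3). Your preliminary remark that the hypotheses implicitly force $l(w)\ge 2$ is a reasonable and harmless clarification.
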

\begin{proof}
Follows directly from Definition~\ref{def_sprout}.
\end{proof}

The following concept further expands the applications of the idea of the sprout sequence.  Namely, sprout sequence consists of  'shorter' subwords of a given word. In the next definition we directly  restrict  their lengths.

\begin{definition}\label{def_bound}
We say that a word $w$ is {\em $k$-bounded},  $k \ge 2$,  if there exists  an $l$-sprout sequence for $w$ that does not contain elements of the length greater than $k$ or the word itself has length 1. We call the respective sequence {\em $k$-bounded} as well.
\end{definition}

\begin{lemma}\label{lem_smolb}
Let $\A$ be an $\F$-algebra, $\SS$ be its generating set, and $k\ge 2$ be an integer. Assume that $w$ is  a word  in $\SS$ such that  $l(w) \le  2k-1$. Then $w$ is $k$-bounded.  
\end{lemma}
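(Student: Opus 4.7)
The plan is to prove this by induction on $l(w)$, exploiting the recursive nature of the sprout sequence construction in Definition~\ref{def_sprout}: the sprout of a word $w = w_1 w'_1$ (with $l(w_1) \le l(w'_1)$) is precisely $w_1$ followed by a sprout of $w'_1$, as made precise by Lemma~\ref{lem_ind}.

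For the base case $l(w) = 1$, the word is $k$-bounded by the second clause of Definition~\ref{def_bound}. For the inductive step, suppose $l(w) \ge 2$ and the claim holds for all strictly shorter words. Take the last multiplication in $w$ and let $w_1, w'_1$ be its two factors with $l(w_1) \le l(w'_1)$. The key arithmetic observation is that $2 l(w_1) \le l(w_1) + l(w'_1) = l(w) \le 2k-1$, so $l(w_1) \le k-1 \le k$. If $l(w'_1) = 1$, then also $l(w_1) = 1$ and the sprout of $w$ is simply $(w_1)$, which is $k$-bounded. Otherwise $l(w'_1) \ge 2$, and since $l(w_1) \ge 1$, we have $l(w'_1) = l(w) - l(w_1) \le 2k - 2 \le 2k-1$, so the inductive hypothesis yields a $k$-bounded $l$-sprout sequence $(l(w_2), \ldots, l(w_r))$ for $w'_1$.

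To conclude, I would combine these to produce a $k$-bounded $l$-sprout sequence for $w$. By Definition~\ref{def_sprout}, since $(w_2, \ldots, w_r)$ is a sprout of $w'_1$, the sequence $(w_1, w_2, \ldots, w_r)$ is a sprout of $w$, and hence $(l(w_1), l(w_2), \ldots, l(w_r))$ is an $l$-sprout sequence of $w$. All its entries are bounded by $k$: the first by the arithmetic above, and the remaining ones by the inductive hypothesis applied to $w'_1$. Therefore $w$ is $k$-bounded.

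I do not expect any serious obstacle here; the statement is essentially a structural consequence of the definition of the sprout together with the fact that at every step we discard at least half of the remaining length by descending into the longer factor $w'_1$. The only subtle point is handling the edge case $l(w'_1) = 1$ cleanly, which requires the base-case clause in Definition~\ref{def_bound} (and confirms that the termination condition $l(w'_r) = l(w_r) = 1$ of Definition~\ref{def_sprout} is reached in finite time).
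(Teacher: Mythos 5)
Your proof is correct and rests on the same key observation as the paper's: each sprout element is the shorter factor of a multiplication inside a subword of $w$, hence has length at most $l(w)/2 \le (2k-1)/2 < k$. The paper states this directly in one line without any induction, but your inductive packaging (whose splicing step is exactly Lemma~\ref{lem_seqexp}) is a valid, if slightly longer, route to the same conclusion.
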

\begin{proof}
If $w$ has length 1, then the statement is evident. Otherwise it is enough to note that the elements of its sprout sequence are the shorter factors of last multiplication in the respective subwords of $w$, which means that their lengths are less than or equal to $\frac{l(w)}{2} \le \frac{2k-1}{2} <k$.
\end{proof}

\begin{lemma}\label{lem_subbound}
Let $\A$ be an $\F$-algebra, $\SS$ be its generating set, and $k\ge 2$ be an integer. Assume, $w$ be  a $k$-bounded word in $\SS$. Then every subword of $w$ is also $k$-bounded.
\end{lemma}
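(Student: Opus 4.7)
The plan is to proceed by strong induction on $l(w)$. In the base case $l(w) = 1$, the only subword of $w$ is $w$ itself, which is $k$-bounded by Definition~\ref{def_bound}, so there is nothing to prove.

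For the inductive step, I would assume $l(w) \ge 2$ and that the claim holds for every $k$-bounded word of length strictly less than $l(w)$. Fix a witnessing $k$-bounded $l$-sprout $(l(w_1), \ldots, l(w_r))$ of $w$, coming from a sprout sequence $(w_1, \ldots, w_r)$ with supporting sequence $(w'_1, \ldots, w'_r)$; then, by Definition~\ref{def_sprout}, either $w = w_1 w'_1$ or $w = w'_1 w_1$, and $l(w_j) \le k$ for every $j$. By Lemma~\ref{lem_subduh}, every proper subword of $w$ is a subword either of $w_1$ or of $w'_1$. Since both $l(w_1)$ and $l(w'_1)$ are strictly smaller than $l(w)$, it suffices to check that $w_1$ and $w'_1$ are themselves $k$-bounded and then invoke the inductive hypothesis on each of them.

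For $w_1$, the bound $l(w_1) \le k \le 2k - 1$ together with Lemma~\ref{lem_smolb} immediately gives that $w_1$ is $k$-bounded. For $w'_1$, if $r = 1$ then $l(w'_1) = 1$ and $w'_1$ is $k$-bounded by default; if $r \ge 2$, Lemma~\ref{lem_ind} asserts that $(w_2, \ldots, w_r)$ is a sprout of $w'_1$, whose associated $l$-sprout $(l(w_2), \ldots, l(w_r))$ inherits the bound $l(w_j) \le k$ from the original $l$-sprout of $w$, so $w'_1$ is $k$-bounded as well. Applying the inductive hypothesis to $w_1$ and to $w'_1$ then shows that every subword of each is $k$-bounded, which together with the decomposition supplied by Lemma~\ref{lem_subduh} finishes the argument.

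There is no real obstacle here: the statement is a structural unwinding that combines how subwords split across the last multiplication (Lemma~\ref{lem_subduh}) with how a sprout restricts to one of its supporting subwords (Lemma~\ref{lem_ind}). The only mild point to track is the terminal case $r = 1$, where the supporting word has length one, which is dispatched in one line.
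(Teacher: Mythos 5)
Your proof is correct and follows essentially the same route as the paper's: induction on $l(w)$, reduction of proper subwords to the two factors of the last multiplication via Lemma~\ref{lem_subduh}, with the shorter factor handled by Lemma~\ref{lem_smolb} and the longer one by restricting the witnessing sprout via Lemma~\ref{lem_ind}. The only cosmetic difference is your base case at $l(w)=1$ versus the paper's at $l(w)\le 2k-1$, which changes nothing of substance.
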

\begin{proof}
We prove this statement using induction on the length $l$ of the word~$w$.

The base. For $l=1,\ldots,2k-1$ the statement is evident by Lemma~\ref{lem_smolb}.

The step. Assume that the statement holds for $l=1,\ldots,t$ and consider $l=t+1$, $t \ge 2k-1$. There are two possibilities for a subword~$w'$: either it is a factor of $w$ or it is a subword of one of factors of~$w$. Let us consider each of these cases separately:

a.  Let $w = w' w''$ or $w = w'' w'$. If $l(w')\le l(w'')$, then $l(w') <k$ since it is a first element in every $l$-sprout sequence of a $k$-bounded word. If $l(w') > l(w'')$, we consider a sprout sequence of $w$ corresponding to the $k$-bounded $l$-sprout sequence. By Lemma \ref{lem_ind} it has the form $(w'', T)$, where $T$ is a sprout sequence of $w'$. Thus, the sequence of lengths of elements of $T$ is $k$-bounded itself as a subsequence of a $k$-bounded sequence.   

b. Let $w'$ be not a factor of $w$, i.e. a proper subword of one of factors of $w$. Note that factors of the last multiplication are $k$-bounded by Item a. of this lemma and shorter than $w$. Thus $w'$ is $k$-bounded by the induction hypothesis.
\end{proof}

Now we connect the notion of the sprout sequences to the previously introduced classes of $k$-mixing and $k$-sliding algebras, namely, we are going to prove that in these classes of algebras any irreducible word $w$  can be represented as a linear combination of $k$-bounded words of the length not greater than~$l(w)$.

\begin{proposition}\label{prop_repres}
Let  $k\ge 2$ be an integer, $\A$ be a $k$-mixing $\F$-algebra, and $\SS$ be its generating set. Assume that  $w$ is an irreducible word in $\SS$, $l(w) \ge 2$. Then there exist irreducible $k$-bounded words $w_1,\ldots,w_n$ such that $l(w_i) \le l(w)$ and $w \in \langle w_1,\ldots,w_n \rangle$.
\end{proposition}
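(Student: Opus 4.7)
The plan is to proceed by strong induction on the lexicographic pair $(l(w), s(w))$, where for a word of length at least $2$ we set $s(w) = \min(l(a), l(b))$ for the unique top-level split $w = a \cdot b$. It is convenient to prove the slightly stronger claim that every word $u$ in $\SS$, irreducible or not, can be written as a linear combination of irreducible $k$-bounded words of length at most $l(u)$; the Proposition is then the irreducible special case, and any reducible $u$ lies in $\L_{l(u)-1}$ by Definition \ref{Def_Irr} and is handled summand by summand. The base case $l(w) \leq 2k-1$ is immediate from Lemma \ref{lem_smolb}: the word $w$ is already $k$-bounded and serves as its own representation.

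Assume $l(w) \geq 2k$ and write $w = a \cdot b$ with $l(a) \leq l(b)$ using Lemma \ref{lem_1}. When $l(a) \leq k-1$, apply the inductive hypothesis to $b$ to obtain $b = \sum \alpha_i b_i$ with each $b_i$ irreducible, $k$-bounded, and of length at most $l(b)$, and expand $w = \sum \alpha_i (a b_i)$. Each $a b_i$ has length at most $l(w)$: if $l(a b_i) < l(w)$ the length-induction applies; if $l(a b_i) = l(w)$ then $l(b_i) = l(b) \geq l(a)$, so Lemma \ref{lem_seqexp} gives the $l$-sprout of $a b_i$ as $(l(a), l\text{-sprout of } b_i)$, which is $k$-bounded because $l(a) < k$ and $b_i$ is $k$-bounded. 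Such $a b_i$ is usable directly if irreducible, and otherwise lies in $\L_{l(w)-1}$ and falls under the length-induction.

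The main case is $l(a) \geq k$. Split $a$ as a product of exactly $k$ non-empty subwords $y_1, \ldots, y_k$ by iteratively refining the top-level binary decomposition of $a$; then $a \in W(\{y_1, \ldots, y_k\})$. The $k$-mixing identity of Definition \ref{def_1'}, applied with $x = b$, gives $w = a \cdot b = \sum \beta_j r_j$ with each $r_j \in D_0(b, y_1, \ldots, y_k)$. If $r_j$ omits at least one variable its length is strictly less than $l(w)$ and the length-induction handles it. Otherwise $l(r_j) = l(w)$, and the definition of $D_0$ forces $b$ to be nested strictly inside one of the two top-level factors of $r_j = s_1 \cdot s_2$: the factor containing $b$ has length at least $l(b) \geq l(a)$, while the other factor is a product of a proper non-empty subset of the $y_i$'s and so has length strictly less than $l(a)$. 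Hence $s(r_j) < l(a) = s(w)$, and the $s$-component of the induction completes this case.

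The principal obstacle is the structural analysis in the last case: reading off from the explicit shape of $D_0$ that, when all $k+1$ variables appear, $b$ must be a proper subword of one of the top-level factors of $r_j$, and that this in turn forces $s(r_j) < s(w)$. The auxiliary routine step is the combinatorial fact that any binary tree with at least $k$ leaves can be cut into exactly $k$ non-empty subtrees, which is needed to invoke $k$-mixing; everything else is straightforward bookkeeping of lengths along the induction.
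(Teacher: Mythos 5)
Your proposal is correct and takes essentially the same route as the paper's proof: the paper also runs an outer induction on length with an inner induction on $s(w)=\min(l(w'),l(w''))$ (your lexicographic pair), handles the case $s(w)\le k-1$ via Lemmas \ref{lem_smolb} and \ref{lem_seqexp}, and in the main case splits the shorter factor into exactly $k$ nonempty subwords and uses the structure of $D_0$ to force $s(r_j)<s(w)$ for the full-length irreducible terms. Your repackaging (proving the statement for all words rather than only irreducible ones, and merging the two inductions into one well-founded order) is only a presentational variant of the paper's argument.
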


\begin{proof}
We will prove this statement by induction on the length $l$ of the word $w$.

The base for $l$. For $l=2,\ldots,2k-1$ the statement is trivial, as $w$ itself is $k$-bounded by Lemma \ref{lem_smolb}.

The step for $l$. Assume that the statement holds for $l=2,\ldots,r$ where $r\ge 2k-1$ and consider an irreducible word $w$ of length $r+1$. By Lemma \ref{lem_1} we can write $w =v \cdot u$, where $v$ and $u$ are irreducible words of non-zero lengths.

Now we need to start another induction based on the parameter $j = s(w) =\min (l (u), l(v))$ in order to use the $k$-mixing property of $\A$.

1. The base for $j$.  Assume $j=1,\ldots,k-1$. There are two possibilities.

1.1. Let  $l(v) = j$, i.e. $l(u) \ge l(v)$ and $l(v) <k$. Note that $l(u) \ge \frac{r+1}{2} \ge k \ge 2$, which means we can apply the induction hypothesis for $l$ to $u$. Let $u = f_1 u_1 + \ldots + f_n u_n$ be the resulting decomposition, where $f_i$ are scalar coefficients from $\F$, and $u_i$ are $k$-bounded irreducible words. This allows us to write $w = v u =  f_1 (v u_1) + \ldots + f_n (v u_n)$. 

If $l(u_i) \ge k$, then by Lemma \ref{lem_seqexp} there is a sprout sequence of $v u_i$ of the form $(v, T)$, where $T$ is a $k$-bounded sprout sequence of $u_i$. This means that $v u_i$ is $k$-bounded as $l(v) <k$ and length of every element of $T$ is less than $k$ as well.

If $l(u_i) \le k-1$, then $v u_i$ is $k$-bounded by Lemma~\ref{lem_smolb}.

Thus, $w =   f_1 (v u_1) + \ldots + f_n(v u_n)$ is a representation of the initial word as a linear combination of $k$-bounded words. We will prove that each element $w' = (u v_i)$ in this combination can be reduced to the linear combination of $k$-bounded irreducible words.

1.1.a. If $w'$ is reducible, we can decompose it as a linear combination of irreducible words of lengths less than $r$ and apply the induction hypothesis for $l$ to them.

1.1.b. If $w'$ is irreducible, it is enough to note that it is already $k$-bounded.

1.2. The case $l(u)=j$ is similar, which concludes the proof of the base for~$j$.

2. The step for $j$. Assume that the statement holds for values of $j=1,\ldots,p$ with $p\ge k-1$. For $j=p+1$ there are again two possibilities. 

2.1. Let  $l(v)=p+1$. Then it holds that $p+1 \ge k$, which means that $v$ is equal to a product of exactly $k$ words of non-zero length. Let us denote them as $v_1,\ldots v_k$. Consider the representation of $w = v \cdot u$ as a linear combination of elements of $D_0(u, v_1,\ldots v_k )$. We will prove that each element $w'$ in this combination can be reduced to a linear combination of $k$-bounded irreducible words.

2.1.a. If $w'$ is reducible, we can decompose it as a linear combination of irreducible words of lengths less than $r$ and apply the induction hypothesis for $l$ to them.

2.1.b. If $w'$ is irreducible and $l(w) \le r$, then we can apply the induction hypothesis for $l$ to $w'$ directly.

2.1.c. If $w'$ is irreducible and  $l(w)=r+1$, i.e. an element of $R (v \cdot u$), then $s(w') < s(w)$ by the definition of this set and the induction hypothesis for $j$ can be applied.

2.2. $l(u)=p+1$ is similar.  

This concludes the induction by $j$ and thus by induction by $l$ concludes the proof.
\end{proof}

\begin{proposition}\label{prop_represZ}
Let  $k\ge 2$ be an integer, $\A$ be a $k$-sliding $\F$-algebra,  $\SS$ be its generating set. Assume that   $w$ is an irreducible word in $\SS$, $l(w) \ge  2$. Then there exist irreducible $k$-bounded words $w_1,\ldots,w_n$ such that $l(w_i) \le l(w)$ for each $i=1,\ldots, n$, and $w \in \langle w_1,\ldots,w_n \rangle$.
\end{proposition}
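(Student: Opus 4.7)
The plan is to mirror the proof of Proposition~\ref{prop_repres}, replacing the $k$-mixing identity and the set $D_0$ with the analogous $k$-sliding identity and the set $D_r$ (or $D_l$). We assume without loss of generality that Item~2 of Definition~\ref{def_2'} holds; Item~1 is handled symmetrically, using $D_l$ and the length of the left factor of the last multiplication in place of $D_r$ and the length of the right factor. The outer induction is on $l=l(w)$. The base case $l\in\{2,\ldots,2k-1\}$ is immediate from Lemma~\ref{lem_smolb}: $w$ itself is an irreducible $k$-bounded word, so we take $n=1$ and $w_1=w$. For the inductive step, assume the statement holds for all irreducible words of length at most $r$ with $r\ge 2k-1$, let $w$ be irreducible of length $r+1$, and by Lemma~\ref{lem_1} factor $w=v\cdot u$ with $v,u$ irreducible of positive length.

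Now run a secondary induction on $j=l(u)$, the length of the right factor of the last multiplication. In the base case $j\in\{1,\ldots,k-1\}$ we have $l(v)\ge r+1-(k-1)\ge k+1\ge 2$, so the outer induction hypothesis applies to $v$, giving $v=\sum_i f_i v_i$ with each $v_i$ irreducible, $k$-bounded, and of length at most $l(v)$. Each summand $v_i u$ is treated as follows: if it is reducible, decompose it into irreducible words of strictly smaller length and apply the outer induction hypothesis; if it is irreducible, then either $l(v_i)\le l(u)$, which forces $l(v_iu)\le 2(k-1)<2k$ and hence $k$-boundedness by Lemma~\ref{lem_smolb}, or $l(v_i)>l(u)$, in which case Lemma~\ref{lem_seqexp} shows that the sprout of $v_iu$ begins with $u$ (of length less than $k$) and continues with a $k$-bounded sprout of $v_i$, so $v_iu$ is again $k$-bounded.

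In the inductive step $j\ge k$, write $u$ as a product of exactly $k$ words $u_1,\ldots,u_k$ of positive length, and apply Item~2 of Definition~\ref{def_2'} to express $w=v\cdot u$ as a linear combination of elements of $D_r(v,u_1,\ldots,u_k)$. For each summand $w'$: if $l(w')<l(w)$, decompose it into irreducibles of strictly smaller length and invoke the outer induction hypothesis; if $l(w')=l(w)$, then by the structure of $D_r$ we have $w'=w_1'\cdot w_2'$ with $w_2'$ a product of the $u_i$'s indexed by a non-empty proper subset $T'\subsetneq\{u_1,\ldots,u_k\}$, so $l(w_2')=\sum_{i\in T'}l(u_i)<l(u)=j$; if this $w'$ is reducible apply the outer induction hypothesis, and if it is irreducible apply the secondary induction hypothesis on $j$. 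The principal obstacle is the careful bookkeeping in the base case of the secondary induction, where every summand $v_iu$ must be classified either as $k$-bounded or as reducible to words amenable to the outer induction; otherwise the argument is a direct transcription of the proof of Proposition~\ref{prop_repres}, exploiting the structural fact that the right factor of the last multiplication in any length-$(k+1)$ element of $D_r$ has length strictly less than $l(u)$.
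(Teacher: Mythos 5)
Your proof is correct and follows exactly the route the paper intends: the published proof of Proposition~\ref{prop_represZ} simply says it is analogous to Proposition~\ref{prop_repres}, and your write-up carries out that adaptation faithfully, with the right choice of secondary induction parameter (the length of the right factor, matching the use of $l_r$ in Lemma~\ref{lem_stedstepZ}) and the correct observation that a full-length element of $D_r$ has its right factor indexed by a proper subset of $\{u_1,\ldots,u_k\}$. No gaps.
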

\begin{proof}
This statement can be demonstrated similarly to the proposition above.
\end{proof}

\begin{corollary}\label{cor_exbound}
Let $k\ge 2$ be an integer, $\A$ be a $k$-sliding or a $k$-mixing $\F$-algebra,  $\SS$ be its generating set. Assume that   $w$ is an irreducible word in $\SS$, $l(w) \ge  2$. Then there exists a $k$-bounded irreducible word in $\A$ of the   length~$l(w)$. 
\end{corollary}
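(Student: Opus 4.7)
The plan is to combine Proposition \ref{prop_repres} (in the $k$-mixing case) or Proposition \ref{prop_represZ} (in the $k$-sliding case) with a straightforward contradiction argument based on the definition of irreducibility. Each of those propositions already delivers a decomposition $w = f_1 w_1 + \ldots + f_n w_n$ with scalars $f_i \in \F$ and $k$-bounded irreducible words $w_i$ satisfying $l(w_i) \le l(w)$. So the only remaining content of the corollary is to rule out the possibility that every $w_i$ appearing in such a decomposition is strictly shorter than $w$.

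First I would invoke the appropriate proposition for the class $\A$ belongs to and fix the resulting representation $w = \sum_{i=1}^n f_i w_i$ with each $w_i$ irreducible, $k$-bounded, and of length at most $l(w)$. Next, I would suppose for contradiction that $l(w_i) < l(w)$ for every $i$, i.e. $l(w_i) \le l(w) - 1$. Then every $w_i$ lies in $\L_{l(w)-1}(\SS)$, and hence so does their $\F$-linear combination; that is, $w \in \L_{l(w)-1}(\SS)$. By Definition \ref{Def_Irr} this contradicts the irreducibility of $w$. Therefore at least one index $i_0$ must satisfy $l(w_{i_0}) = l(w)$, and since $w_{i_0}$ was produced by Proposition \ref{prop_repres} or \ref{prop_represZ} it is irreducible and $k$-bounded, which is precisely what the corollary demands.

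There is essentially no obstacle here; the corollary is really a pigeonhole-style consequence of the propositions above, and the only subtlety is noting that the irreducibility hypothesis on $w$ immediately forbids a decomposition of $w$ into strictly shorter words. I would keep the write-up to just a few lines, citing Proposition \ref{prop_repres} and Proposition \ref{prop_represZ} for the two cases and invoking Definition \ref{Def_Irr} for the contradiction.
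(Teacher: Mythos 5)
Your argument is correct and matches the paper's own proof essentially verbatim: both apply Proposition \ref{prop_repres} or \ref{prop_represZ} to write $w$ as a linear combination of $k$-bounded irreducible words of length at most $l(w)$, and then observe that if all of them were strictly shorter, $w$ would lie in $\L_{l(w)-1}(\SS)$, contradicting irreducibility. No gaps; this is the intended pigeonhole-style deduction.
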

\begin{proof}
By successive application of Propositions \ref{prop_repres} and \ref{prop_represZ}  the word $w$ belongs to $  \langle w_1,\ldots,w_n \rangle$, where every $w_i$ is a $k$-bounded irreducible word. Since $w$ is irreducible as well, there exists an index  $i$ such that  $l(w_i) = l(w)$, as otherwise $w$ would be equal to a linear combination of shorter words. 
\end{proof}

\section{Malcev algebras}

The following definition of Malcev algebras in the multilinear form  is equivalent to Definition~\ref{def:Mal} and are already given, see~\cite{Sagle}.

\begin{proposition}\cite[Propostion 2.21]{Sagle}\label{prop_prop}.
An algebra $\A$ of characteristic not 2 is a Malcev algebra
if and only if $\A$ satisfies the identities:

1. $xy= -yx$ and 

2. $(xy)( zw )= x((wy)z) + w((yz) x)+y((zx) w)
+ z((xw) y)$ 

for all $x, y, z, w \in \A$.
\end{proposition}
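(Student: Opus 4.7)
The plan is to prove the two implications of the equivalence separately. Anticommutativity (identity 1) will be available throughout, and combined with $\mathrm{char}\,\F \neq 2$ it immediately yields $xx = 0$ from the relation $xx = -xx$ obtained by specialising identity 1 to $y = x$.

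For the direction from identity 2 to Definition~\ref{def:Mal}, the plan is to specialise $w = x$ in identity 2. The fourth summand $z((xw)y)$ then becomes $z((xx)y) = 0$, since $xx = 0$. The other terms are normalised by flipping the outer multiplication via anticommutativity: the left-hand side becomes $(xy)(zx) = -(xy)(xz)$, while $x((xy)z) = -((xy)z)x$, $x((yz)x) = -((yz)x)x$, and $y((zx)x) = -((zx)x)y$ on the right. Collecting signs produces precisely the defining Malcev identity $(xy)(xz) = ((xy)z)x + ((yz)x)x + ((zx)x)y$.

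For the converse, the plan is polarisation. Setting $J(x,y,z) = (xy)(xz) - ((xy)z)x - ((yz)x)x - ((zx)x)y$, we have $J \equiv 0$ by hypothesis; forming $J(x+w,y,z) - J(x,y,z) - J(w,y,z)$ extracts the part bilinear in $(x,w)$, giving a multilinear identity in four variables:
\begin{align*}
(xy)(wz) + (wy)(xz) &= ((xy)z)w + ((wy)z)x + ((yz)x)w + ((yz)w)x \\
&\quad + ((zx)w)y + ((zw)x)y.
\end{align*}
Call this identity $(L)$. Since $(L)$ contains six summands on the right while the target identity 2 has only four, I would apply $(L)$ once more with $y$ and $z$ transposed, normalise both instances to a common form via repeated use of $ab = -ba$, and subtract. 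Surplus terms cancel, and a final application of anticommutativity rewrites the survivors into the form $x((wy)z) + w((yz)x) + y((zx)w) + z((xw)y)$ required by identity 2.

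The main obstacle is the converse direction: the linearised identity $(L)$ does not by itself match identity 2, so one must combine it with a permuted variant and systematically track the sign changes introduced by anticommutativity. The assumption $\mathrm{char}\,\F \neq 2$ enters twice, first in extracting $xx = 0$, and secondly in the polarisation step, where $J(x+x,y,z) = 2J(x,y,z) + 2 \cdot (\text{linear})$ becomes degenerate in characteristic $2$ and prevents clean recovery of the bilinear part.
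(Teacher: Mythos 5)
The paper offers no proof of this proposition at all --- it is quoted verbatim from Sagle \cite[Proposition 2.21]{Sagle} --- so your attempt can only be judged on its own merits. Your first direction is correct and complete: setting $w=x$ in identity 2 kills the term $z((xx)y)$ because $xx=0$ (here $\char \F\neq 2$ is genuinely used), and flipping each remaining product by anticommutativity recovers the identity of Definition~\ref{def:Mal} exactly. Your linearised identity $(L)$ is also computed correctly: it is precisely $J(x+w,y,z)-J(x,y,z)-J(w,y,z)$ with your $J$, and it holds in any Malcev algebra.

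The converse, however, has a genuine gap, and the specific completion you propose cannot work. Rewrite the target identity 2 via anticommutativity as
\begin{equation*}
(xy)(wz) \;=\; ((wy)z)x + ((yz)x)w + ((zx)w)y + ((xw)y)z, \tag{$T$}
\end{equation*}
and observe that $(L)$ is exactly $(T)$ plus the instance of $(T)$ with $x$ and $w$ interchanged (the terms $((xw)y)z$ and $((wx)y)z$ cancel). So $(L)$ is the symmetrisation of $(T)$ in $(x,w)$, and to recover $(T)$ you must produce its antisymmetric part from somewhere. Your plan uses only $(L)$ and its $y\leftrightarrow z$ transpose. But $(L)$, viewed as a linear combination of multilinear anticommutative monomials, is literally invariant under the transposition $x\leftrightarrow w$ (the eight terms permute among themselves with matching coefficients), and since $y\leftrightarrow z$ commutes with $x\leftrightarrow w$, the transposed instance is invariant as well; hence so is every linear combination of the two, even after arbitrary rewriting by $ab=-ba$. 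The identity $(T)$ is \emph{not} invariant under $x\leftrightarrow w$: its unique term of pairing type lies over the partition $\{x,y\}\,|\,\{z,w\}$, whereas after the swap it lies over $\{x,z\}\,|\,\{y,w\}$, a different basis monomial of the free anticommutative algebra. Therefore no amount of normalising and subtracting the two identities you name can yield $(T)$; the "surplus terms cancel" step would fail. A correct derivation must bring in instances of the linearised Malcev identity whose distinguished pair of slots is not $\{x,w\}$ (equivalently, Sagle's chain of auxiliary identities for the Jacobian $J(x,y,z)=(xy)z+(yz)x+(zx)y$, such as $J(x,y,xz)=J(x,y,z)x$ and its polarisations), and this is where the real work of the proposition lies.
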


Now we provide two consequences of this representation that we need in our further considerations.

\begin{lemma}\label{lem_smolswap}
	Let $\A$ be a Malcev algebra over a field $\F$ with $\char \F \neq 2$ and $a,b,c,w'$ be its arbitrary elements. It holds that 
	
	\begin{align}\label{eq_smolswap}
		(a b) (c w') = -(ac)(b w')  +  a((w'b)  c) + b ((ca)  w') + c ((aw')  b) + \notag \\
		+  a ((w'c)  b) + c ((ba) w') + b ((aw') c).
	\end{align}
	
\end{lemma}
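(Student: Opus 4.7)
The plan is to apply the multilinear form of the Malcev identity (Proposition \ref{prop_prop}, Item 2) twice with carefully chosen substitutions, then add the two resulting expansions and use anticommutativity to eliminate certain terms.

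First, I would substitute $x=a,\, y=b,\, z=c,\, w=w'$ into the identity
\[
(xy)(zw) = x((wy)z) + w((yz)x) + y((zx)w) + z((xw)y)
\]
to obtain
\[
(ab)(cw') = a((w'b)c) + w'((bc)a) + b((ca)w') + c((aw')b). \tag{I}
\]
Next I would apply the same identity with $x=a,\, y=c,\, z=b,\, w=w'$, giving
\[
(ac)(bw') = a((w'c)b) + w'((cb)a) + c((ba)w') + b((aw')c). \tag{II}
\]

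Adding (I) and (II), the only terms that look unwanted are $w'((bc)a)$ and $w'((cb)a)$. By anticommutativity, $bc = -cb$, so $(bc)a + (cb)a = (bc+cb)a = 0$ (here I use bilinearity of the product in the first argument together with $bc+cb=0$). Hence these two terms cancel, yielding
\[
(ab)(cw') + (ac)(bw') = a((w'b)c) + b((ca)w') + c((aw')b) + a((w'c)b) + c((ba)w') + b((aw')c).
\]
Transposing $(ac)(bw')$ to the right-hand side gives exactly \eqref{eq_smolswap}, completing the proof.

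There is essentially no obstacle: the argument is a direct two-application-plus-cancellation computation. The only thing to watch is choosing the right second substitution (keeping $a$ and $w'$ in the same positions while swapping $b$ and $c$ into the roles of $y$ and $z$), so that after summing the two middle terms involving $w'$ pair up into $w'((bc+cb)a)$ and vanish. The assumption $\char \F \ne 2$ is implicit in having used the equivalent multilinear form of Proposition \ref{prop_prop}.
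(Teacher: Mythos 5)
Your proof is correct and follows essentially the same route as the paper: both apply the multilinear Malcev identity of Proposition \ref{prop_prop} to $(ab)(cw')$ and to $(ac)(bw')$ and use anticommutativity to eliminate the two terms of the form $w'(\cdot)$; the paper phrases this as substituting the expansion of $w'((cb)a)$ back into the first expansion rather than adding the two identities, which is the same computation.
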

\begin{proof}
	By Proposition \ref{prop_prop} we can write $(ab)(c w') = a((w'b) c) + w' ((bc)  a) + b ((ca) w') + c ((aw') b)$. Note that $w' ((bc) a) = - w' ((cb) a)$ and by the same proposition  $w' ((cb) a) = (ac)(b w') - a ((w'c) b) - c ((ba) w') - b ((aw') c)$. By substituting this result in the first equality we obtain the desired identity.
\end{proof}

\begin{lemma}\label{lem_swap}
	Let $\A$ be a Malcev algebra over a field $\F$ with $\char \F \neq 2$ and $a,b,c,d,w'$ be its arbitrary elements. Then we have
	\begin{align}\label{eq_swap}
		(a b) ((cd) w') = - (c d) ((ab)w')  + ((cw') b) (d a) -  a ((bd) (cw'))  -\notag \\ - b((d(cw'))  a)- d ((w' (ab))c) - (ab) ((dw')c) - c ((d b) (a w') )  + \notag \\ + c( d ((w' a) b)) +c ( b((ad)  w')) + c( a((dw') b))- c ((d w') (ab)).
	\end{align}
	
\end{lemma}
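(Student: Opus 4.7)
The plan is to derive \eqref{eq_swap} by combining the small swap identity \eqref{eq_smolswap} from Lemma~\ref{lem_smolswap} with the multilinear Malcev identity of Proposition~\ref{prop_prop} and anticommutativity. The first and decisive step is to apply Lemma~\ref{lem_smolswap} with the substitution $c \mapsto cd$ (everything else kept fixed), which yields
\begin{align*}
(ab)((cd)w') &= -(a(cd))(bw') + a((w'b)(cd)) + b(((cd)a)w') \\
&\quad + (cd)((aw')b) + a((w'(cd))b) + (cd)((ba)w') + b((aw')(cd)).
\end{align*}
The sixth summand $(cd)((ba)w')$ equals $-(cd)((ab)w')$ by anticommutativity, so the leading term $-(cd)((ab)w')$ of \eqref{eq_swap} is already produced.

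The remaining six summands must be converted into the other ten terms of \eqref{eq_swap}. For each summand I would apply Proposition~\ref{prop_prop} to its length-four inner product (in the terms whose outer factor is $a$, $b$, or $(cd)$) or directly to the outer two-by-two product (in $-(a(cd))(bw')$ and $b((aw')(cd))$). Each such application produces four nested triple products. The resulting sum is then reorganised by the outermost left factor, and anticommutativity is used to match the canonical bracketings appearing on the right-hand side of \eqref{eq_swap}. The target terms are characterised by starting with $c(\cdot)$, $d(\cdot)$, $a(\cdot)$, or $b(\cdot)$, or by containing the subwords $(cw')$, $(dw')$, or $(ab)$ at a specific depth, which supplies a natural grouping principle. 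A secondary round of Proposition~\ref{prop_prop} will be needed to split terms of the form $(cd)(\cdot)$ into the $c(\cdot)$ and $d(\cdot)$ shapes that occur in \eqref{eq_swap}.

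The main obstacle is bookkeeping rather than mathematical subtlety: after one round of expansions we already have on the order of twenty-five signed nested products, of which many must cancel in pairs under anticommutativity. Ordering the expansions so that cancellations occur early, and fixing a consistent orientation convention for each two-letter subword (for instance, always writing $(cw')$ rather than $(w'c)$), should control the combinatorial blow-up. No new ingredients beyond Proposition~\ref{prop_prop}, Lemma~\ref{lem_smolswap}, and the anticommutativity $xy=-yx$ are needed; the argument is a direct, though lengthy, identity verification in the free Malcev algebra on five generators $a,b,c,d,w'$.
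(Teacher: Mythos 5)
Your opening move is sound: substituting $c\mapsto cd$ in Lemma~\ref{lem_smolswap} is legitimate, and the summand $(cd)((ba)w')=-(cd)((ab)w')$ does produce the leading term of \eqref{eq_swap}. But from that point on the proposal is a plan, not a proof, and for a lemma whose entire content is an explicit eleven-term identity, the deferred ``bookkeeping'' is the whole argument. You are left with six summands --- $-(a(cd))(bw')$, $a((w'b)(cd))$, $b(((cd)a)w')$, $(cd)((aw')b)$, $a((w'(cd))b)$, $b((aw')(cd))$ --- in none of which $a$ and $b$ are multiplied together, yet three of the ten remaining target terms contain $(ab)$ as a subword at a prescribed depth ($-d((w'(ab))c)$, $-(ab)((dw')c)$, $-c((dw')(ab))$). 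Recreating that subword forces you to apply Proposition~\ref{prop_prop} ``in reverse,'' solving for a right-hand-side slot whose associated left-hand side is precisely $(ab)(\cdot)$; the most natural such application to, say, $a((w'b)(cd))$ regenerates $(ab)((cd)w')$ itself, so the procedure risks circularity, and nothing in the proposal verifies that some ordering of the roughly twenty-five resulting signed products collapses to exactly the stated right-hand side rather than to a different expression that is merely equivalent modulo the Malcev identity. That verification is the missing step.

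For comparison, the paper's proof avoids this blow-up by not using Lemma~\ref{lem_smolswap} at all. It applies Proposition~\ref{prop_prop} once with $x=d$, $y=ab$, $z=c$, $w=w'$ and solves for the slot $(ab)((cd)w')$, obtaining only four terms, $(ab)((cd)w') = (d(ab))(cw') - d((w'(ab))c) - w'(((ab)c)d) - c((dw')(ab))$, two of which already appear verbatim in \eqref{eq_swap}; it then expands the first and third terms (the latter twice) by further single applications of the identity together with anticommutativity. That route needs only four applications of the multilinear identity in total and never has to reassemble the subword $(ab)$, because $(ab)$ is kept as a single letter throughout. If you want to salvage your approach, you must actually carry out and display the six expansions and exhibit the cancellations; as written, the proof is incomplete.
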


\begin{proof}
	By Proposition \ref{prop_prop} we can write 
	\begin{equation}\label{eq:6.3} (ab) ((cd) w') = (d (ab)) (c w') - d ((w' (ab))  c) - w' (((ab) c) d) - c ((d w')(ab)).\end{equation}  Now we consider  the words on the right hand side of this equality.
	
	Note that the first word $(d (ab)) (c w') = (c w') ((ab)  d)$. By the same proposition,  $ (c w') ((ab)  d) = ((cw') b) (d a) -  a ((bd)  (cw')) - b((d(cw')) a) - d ((cw')  (ab)) $.  
	
	The third word $w' (((ab) c) d) = - w' ((c (ab)) d)$ and by the same proposition   $w' ((c (ab)) d) = (d c) ((ab) w') - d((w' c) (ab)) - c (((ab) d) w') - (ab) ((dw') c)$.

	Now, $((ab) d)  w' = w' ((b a)  d)$. By the same proposition, $w' ((b a)  d) = (d b) (a w') - d ((w' a) b) - b((ad) w') - a((dw') b)$.
	
	By substituting these results in the expression \eqref{eq:6.3}  and noting that  $(c d) ((ab)w') = -(d c) ((ab) w')$ we obtain the desired identity.
\end{proof}

\begin{corollary}
A Malcev algebra over a field $\F$ with $\char \F \neq 2$ is 3-mixing and has steadily growing length with u-velocity~2.
\end{corollary}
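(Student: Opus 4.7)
\medskip
\noindent\emph{Proof proposal.} The plan is to split the statement into two parts: first verify that every Malcev algebra $\A$ over $\F$ with $\char \F \neq 2$ is $3$-mixing in the sense of Definition \ref{def_1'}, and then quote Theorem \ref{th_stgr} with $k=3$ to conclude steady growth with u-velocity~$2$.

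For the $3$-mixing step, I will fix $x, y_1, y_2, y_3 \in \A$ and aim at the inclusion $x W(\{y_1, y_2, y_3\}) \cup W(\{y_1, y_2, y_3\}) x \subset \langle D_0(x, y_1, y_2, y_3) \rangle$. Anticommutativity gives $v \cdot x = -x \cdot v$, so it is enough to handle the left-factored words $x \cdot v$; moreover, using $(y_i y_j) y_k = -y_k (y_i y_j)$ and $y_j y_k = -y_k y_j$ inside $v$, every $v \in W(\{y_1, y_2, y_3\})$ reduces, up to sign, to one of the three shapes $y_i(y_j y_k)$ with $\{i,j,k\} = \{1,2,3\}$.

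To produce the required decomposition I would specialise the multilinear Malcev identity of Proposition \ref{prop_prop} (which is also the starting point of Lemma \ref{lem_smolswap}), setting its four variables to $y_1, y_2, y_3, x$. This yields
\[ (y_1 y_2)(y_3 x) = y_1((x y_2) y_3) + x((y_2 y_3) y_1) + y_2((y_3 y_1) x) + y_3((y_1 x) y_2), \]
and solving for the only summand in which $x$ is a pure factor of the outermost multiplication gives
\[ x \cdot ((y_2 y_3) y_1) = (y_1 y_2)(y_3 x) - y_1((x y_2) y_3) - y_2((y_3 y_1) x) - y_3((y_1 x) y_2). \]
In every term on the right $x$ is strictly nested inside one of the two factors of the outermost multiplication, so each term lies in $D_0(x, y_1, y_2, y_3)$. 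Combined with $(y_2 y_3) y_1 = -y_1(y_2 y_3)$, this places $x \cdot (y_1(y_2 y_3))$ in $\langle D_0 \rangle$; relabelling the triple $(y_1, y_2, y_3)$ then disposes of the remaining two essentially different shapes $y_2(y_1 y_3)$ and $y_3(y_1 y_2)$, which finishes the $3$-mixing verification.

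Once $3$-mixing is established, the u-velocity claim follows at once from Theorem \ref{th_stgr} applied with $k=3$, which delivers u-velocity $k-1=2$. The main substantive step is the single rearrangement of the multilinear Malcev identity, and the only obstacle I anticipate is bookkeeping: verifying that the four terms on the right truly avoid the forbidden pattern of $x$ appearing as a pure factor of the outermost multiplication, and that relabelling the indices $1,2,3$ covers all remaining shapes of~$v$.
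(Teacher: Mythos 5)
Your proposal is correct and follows essentially the same route as the paper: both rearrange the multilinear Malcev identity of Proposition \ref{prop_prop} to isolate the single summand in which $x$ is an outer factor, observe that the remaining terms lie in $D_0(x,y_1,y_2,y_3)$, invoke anticommutativity and relabelling to cover all words of $x\,W(\{y_1,y_2,y_3\}) \cup W(\{y_1,y_2,y_3\})\,x$, and then apply Theorem \ref{th_stgr} with $k=3$. Your version merely makes the substitution of variables and the case bookkeeping more explicit than the paper does.
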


\begin{proof}
	By Proposition \ref{prop_prop} a Malcev algebra $\A$ over a field $\F$ with $\char \F \neq 2$ satisfies
 $$(xy)( zw) = x((wy) z) + w((yz) x)+y((zx) w) + z((xw) y)$$  for all $x, y, z, w \in \A$. By rearranging words we achieve
\begin{equation}\label{eq_malcev}
x((wy) z) = (xy)  (zw) -   w((yz)  x)- y((zx) w) - z((xw) y)  \in D_0 (x,y,z,w).
\end{equation}
Since $\A$ is anticommutative the inclusion \eqref{eq_malcev} implies
$$x (z  (wy)),\ ((wy) z)x, \ (z (wy))x \in \langle D_0 (x,y,z,w) \rangle$$  for all $x, y, z, w \in \A$. Thus, we can conclude that  $\A$ is $3$-mixing by Definition \ref{def_1'}, which in turns leads to the class having  steadily growing length with u-velocity 2 by Theorem~ \ref{th_stgr}.
\end{proof}

The above result can be substantially improved using the notions introduced in Section 5. 
Namely, our next goal is to demonstrate that Malcev algebras have a slowly growing length. 

In the paper \cite{GutK21} we proved that 2-mixing and 2-sliding algebras have slowly growing length. This was done by the direct application of  a particular case of   
Theorem \ref{th_stgr} for $k=2$, see \cite[Theorem 3.6]{GutK21}. 
However, since an arbitrary Malcev algebra $\A$ is  $3$-mixing, we cannot guarantee that $m_{j} - m_{j-1} \le 1$ for the characteristic sequence $(m_1,\ldots,m_d)$ of a generating set $\SS$ of $\A$.  Indeed, Theorem \ref{th_stgr} implies just that the inequality $m_{j} - m_{j-1} \le 2$ for each $j=1,\ldots, d$.  We are going to demonstrate that in a Malcev algebra if for some $l, 1\le l\le d$ the equality $m_l - m_{l-1} = 2$ holds, then $m_{l-1}=m_{l-2}$. This implication provides the bound  $l(\SS) = m_d \le  m_1 + (d-1) \le d = \dim \A$ since each addition of 2 is preceded by the addition of~0. 

To  realize this plan  we need the following definition.

\begin{definition}
Consider an algebra $\A$ over a field $\F$ and its generating set $\SS$. We say that the words $u$ and $v$ in $\SS$ are {\em equivalent} if $u,v$ are linearly dependent modulo $\L_{\max (l(u),l(v))-1}(\SS)$, i.e., a certain linear combination of $u$ and $v$ belongs to the subspace $\L_{\max (l(u),l(v))-1}(\SS)$. We denote this as~$u \sim v$.
\end{definition}

For the relation $\sim$ we can prove several useful properties.

\begin{lemma}\label{lem_sim}
\begin{enumerate}
\item $\sim$ is an equivalence relation.
\item The word $u$ is reducible if and only if $u \sim 0$.
\item If $u$ is irreducible and $u \sim v$, then $v$ is also irreducible.
\item If $u\sim v$ and $w$ is a word in $\SS$ then $uw \sim vw$ and $w u \sim w v$.
\end{enumerate}
\end{lemma}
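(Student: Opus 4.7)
The plan is to verify each of the four properties directly from the definition, relying on the observation that $u \sim v$ expresses the existence of a nontrivial relation $\alpha u + \beta v \in \L_{m(u,v)-1}(\SS)$ with $\alpha,\beta \ne 0$, where $m(u,v) = \max(l(u),l(v))$. Items 2 and 4 reduce to short length bookkeeping, and item 3 will follow from items 1 and 2 by a one-line contrapositive, so the main technical content sits in the transitivity part of item 1.

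For item 1, reflexivity holds since $1 \cdot u + (-1) \cdot u = 0 \in \L_{l(u)-1}(\SS)$, and symmetry is just a swap of coefficients. For transitivity, given witnesses $\alpha_1 u + \beta_1 v \in \L_{m_1-1}(\SS)$ and $\alpha_2 v + \beta_2 w \in \L_{m_2-1}(\SS)$, I would form the cross-combination
\[
\alpha_2(\alpha_1 u + \beta_1 v) - \beta_1(\alpha_2 v + \beta_2 w) = \alpha_1\alpha_2 u - \beta_1\beta_2 w,
\]
which lies in $\L_{\max(m_1,m_2)-1}(\SS)$ with both residual coefficients $\alpha_1\alpha_2$ and $\beta_1\beta_2$ nonzero. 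When $l(v) \le \max(l(u),l(w))$, the ambient subspace coincides with $\L_{\max(l(u),l(w))-1}(\SS)$ and this is exactly the required witness for $u \sim w$. The complementary case, $l(v) > \max(l(u),l(w))$, has to be handled separately: each original witness then automatically forces $v \in \L_{l(v)-1}(\SS)$ (because the $u$- and $w$-terms already sit in $\L_{l(v)-1}(\SS)$ for length reasons), and one extracts $u \sim w$ by substituting the reducible expression for $v$ back into the witnesses and collecting terms in the smaller filtration level $\L_{\max(l(u),l(w))-1}(\SS)$.

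For item 2, if $u$ is reducible then $u \in \L_{l(u)-1}(\SS)$, so $1 \cdot u + 1 \cdot 0 = u$ witnesses $u \sim 0$; conversely, a nontrivial relation $\alpha u + \beta \cdot 0 = \alpha u \in \L_{l(u)-1}(\SS)$ with $\alpha \ne 0$ gives $u \in \L_{l(u)-1}(\SS)$, i.e., reducibility. Item 3 is then a one-line argument: if $v$ were reducible, item 2 would give $v \sim 0$, transitivity from item 1 would chain this with $u \sim v$ to yield $u \sim 0$, and item 2 applied once more would contradict the irreducibility of $u$. For item 4, any witness $\alpha u + \beta v \in \L_{m-1}(\SS)$ with $m = \max(l(u),l(v))$ is carried by right-multiplication by $w$ into $\L_{m-1+l(w)}(\SS)$, since multiplying a spanning word of length at most $m-1$ by $w$ produces a word of length at most $m-1+l(w)$; because $\max(l(uw),l(vw)) = m + l(w)$, the combination $\alpha(uw) + \beta(vw) = (\alpha u + \beta v)w$ witnesses $uw \sim vw$, and the case $wu \sim wv$ is entirely identical.

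The main obstacle will be the dominant-$l(v)$ case in transitivity, where a naive application of the cross-combination lands in a subspace that is a priori too large; the resolution is to use the automatic reducibility of $v$ in that regime to refine the ambient filtration level. Everything else amounts to elementary length arithmetic.
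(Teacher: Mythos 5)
Your items 2, 3 and 4 are correct and follow essentially the same route as the paper: item 4 is exactly the paper's computation (multiply the witnessing combination by $w$ and note that $\L_{\max(l(u),l(v))-1+l(w)}=\L_{\max(l(uw),l(vw))-1}$), and items 2 and 3 are the same short reductions to Definition \ref{Def_Irr} and to items 1--2 that the paper gives. Your cross-combination for transitivity in the case $l(v)\le\max(l(u),l(w))$ is also fine, and is in fact more explicit than the paper's own proof, which records only that dependence modulo $\L_h$ persists modulo $\L_{h'}$ for $h\le h'$ and never addresses the descent back down to $\L_{\max(l(u),l(w))-1}$.

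The genuine gap is your second transitivity case, $l(v)>\max(l(u),l(w))$, and it cannot be repaired as written. You correctly observe that each witness then forces $v\in\L_{l(v)-1}$, i.e. $v$ is reducible; but the converse also holds: once $v$ is reducible and strictly longer than $u$ and $w$, the relations $u\sim v$ and $v\sim w$ hold for \emph{every} choice of $u$ and $w$, since any combination $\alpha u+\beta v$ automatically lies in $\L_{l(v)-1}$ (both $u$ and $v$ do). The two hypotheses collapse to the single statement that $v$ is reducible and impose no constraint linking $u$ to $w$, so ``substituting the reducible expression for $v$ back into the witnesses'' only reproduces identities inside $\L_{l(v)-1}$ and can never place $\alpha u+\beta w$ in $\L_{\max(l(u),l(w))-1}$. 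Indeed the conclusion fails there: take $u$ and $w$ to be two irreducible words of a common length $l$ that are linearly independent modulo $\L_{l-1}$ (such pairs exist exactly when the characteristic sequence has a repeated entry, cf. Lemma \ref{lem_same}) and $v$ any reducible word of length greater than $l$; then $u\sim v\sim w$ but $u\not\sim w$. The statement --- and the paper's one-line proof, which has the same blind spot --- is only correct under the implicit convention that $\sim$ is applied to words of a common length, which is the only way the relation is used in Section 6; the honest fix is to restrict to that case, where your Case 1 already completes the argument, rather than to seek a descent argument in Case 2.
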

\begin{proof}
\begin{enumerate}
\item Reflexivity and symmetry of $\sim$ are evident, and it can be easily checked that it satisfies transitivity as $u,v$ linearly dependent modulo $\L_{h}$ implies that they are linearly dependent modulo $\L_{h'}$, where $h \le h'$.
\item Follows directly from Definition \ref{Def_Irr}.
\item Follows directly from the first two items.
\item If  $u \sim v $, i.e. $f_u u + f_v v + x =0$ where $f_u$ and $f_v$ belong to $\F$ and not simultaneously zero, while $x \in \L_{\max (l(u),l(v))-1}(\SS)$, then $f_u (u w) + f_v (v w) + x w =0$ and $f_u (w u) + f_v (w v) + w x =0$. Since $x w$ and $w x$ belong to $\L_{\max (l(u),l(v))-1 + l(w)} = \L_{\max (l(u)+l(w),l(v)+l(w))-1}$, this implies $uw \sim vw$ and $w u \sim w v$.
\end{enumerate}
\end{proof}

Thus, the equivalent words behave   similarly in terms of being irreducible and generating new irreducible words.

\begin{lemma}\label{lem_same}
Let $\A$ be a finite dimensional $\F$-algebra, $\SS$ be its generating set, and $M=(m_1,\ldots,m_d)$ be the characteristic sequence of $\SS$. The equality  $m_{j-1}=m_{j-2}$ holds  if and only if there exist at least two non-equivalent irreducible words of length $m_{j-1}$ in $\SS$.
\end{lemma}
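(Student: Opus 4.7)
The plan is to translate both conditions of the biconditional into the single arithmetic statement $s_{m_{j-1}} \ge 2$, where $s_t := \dim \L_t(\SS) - \dim \L_{t-1}(\SS)$.

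The content of the argument is a dictionary, obtained from Lemma~\ref{lem_sim}, identifying non-equivalence classes of irreducible words of length $t$ with linearly independent nonzero vectors in the quotient space $\L_t(\SS)/\L_{t-1}(\SS)$. Concretely, by Lemma~\ref{lem_sim}(2) a word $u$ of length $t$ is irreducible iff $u \not\sim 0$, i.e.\ iff its image $\bar u \in \L_t/\L_{t-1}$ is nonzero; and by the very definition of $\sim$, two length-$t$ words $u,v$ are non-equivalent iff no nontrivial linear combination of them lies in $\L_{t-1}$, i.e.\ iff $\bar u$ and $\bar v$ are linearly independent in $\L_t/\L_{t-1}$. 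Since $\L_t(\SS) = \langle \SS^t\rangle$ and any word of length strictly less than $t$ already belongs to $\L_{t-1}(\SS)$, the quotient $\L_t/\L_{t-1}$ is spanned by the images of length-$t$ words. Consequently, the maximum cardinality of a pairwise non-equivalent set of irreducible words of length $t$ equals $s_t$, and the existence of at least two such words is equivalent to $s_{m_{j-1}} \ge 2$.

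The second half is bookkeeping against Definition~\ref{CharSeqB}(3): by construction the indices $h$ with $m_h = t$ form one consecutive block of length $s_t$. For two consecutive indices $j-2, j-1$, the equality $m_{j-1} = m_{j-2}$ is then exactly the assertion that both indices belong to the block of value $t = m_{j-1}$, which is equivalent to $s_t \ge 2$ (in the setting in which the lemma will be invoked for Malcev algebras, one additionally knows $m_j > m_{j-1}$, so $j-1$ is automatically a non-initial position of its block, and no off-by-one issue arises). Concatenating the two equivalences closes the biconditional.

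The only genuine technical step is the one-line verification that $\L_t/\L_{t-1}$ is spanned by length-$t$ words — a direct consequence of $\L_t(\SS) = \langle \SS^t\rangle$ together with $\SS^{t-1} \subseteq \SS^t$; everything else is an immediate consequence of Lemma~\ref{lem_sim} and of the definition of the characteristic sequence. I do not anticipate any substantive obstacle: the lemma has the flavor of a bookkeeping statement converting a combinatorial condition on the characteristic sequence into an existential condition on irreducible words, with the quotient $\L_{m_{j-1}}/\L_{m_{j-1}-1}$ serving as the common meeting ground.
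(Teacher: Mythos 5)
Your proof is correct and follows essentially the same route as the paper's: both directions are reduced to the single condition $\dim \L_{m_{j-1}}(\SS)-\dim\L_{m_{j-1}-1}(\SS)\ge 2$, read off once via the block structure of the characteristic sequence and once via the identification of non-equivalence classes of irreducible length-$t$ words with linearly independent vectors in $\L_t/\L_{t-1}$. Your parenthetical about the off-by-one in the reverse direction (one needs $j-1$ not to be the initial index of its block, which holds in the Malcev application since there $m_j>m_{j-1}$) is a legitimate point that the paper's terser proof silently glosses over, and handling it explicitly only strengthens the argument.
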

\begin{proof}
By the definition of $M$, there are exactly $\dim \L_{k}(\SS) - \dim \L_{k-1}(\SS)$ elements in $M$ that are equal to a given $k$. The condition  $m_{j-1}=m_{j-2}$ means that there are at least two equal values in the characteristic sequence, so it is equivalent to $\dim \L_{m_{j-1}}(\SS) - \dim \L_{m_{j-1}-1}(\SS) \ge 2$. The last condition  means  the existence of at least two non-equivalent irreducible words of length $m_{j-1}$ in~$\SS$.
\end{proof}

Now we introduce two concepts specific to the $3$-mixing and $3$-sliding cases.

\begin{definition}\label{def_step}
Consider a $3$-mixing or a $3$-sliding algebra $\A$ and its generating set $\SS$. For a 3-bounded word $w$ in $\SS$, $l(w)\ge 2,$  we define the {\em step function} $$\sigma(w) = \min \{t \ : \ w \mbox{ has a subword of length } l-2t-1\}.$$
\end{definition}

\begin{lemma}  \label{Rem_step}
The notion of step function is  defined correctly. \end{lemma}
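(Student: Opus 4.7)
The task is to verify that for every $3$-bounded word $w$ with $l := l(w) \ge 2$, the set $\{t \ge 0 : w \mbox{ has a subword of length } l - 2t - 1\}$ is non-empty, so that the minimum defining $\sigma(w)$ exists. The admissible values $l - 2t - 1 \ge 1$ obtained for $t \ge 0$ are exactly the positive integers bounded by $l - 1$ whose parity differs from that of $l$; hence the claim reduces to producing at least one subword of $w$ whose length has parity opposite to that of $l$.

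The plan is a short case split on the parity of $l$. If $l$ is even, any generator from $\SS$ appearing as a leaf of the binary multiplication tree of $w$ is a subword of length $1$, and $l \ge 2$ guarantees that at least one such leaf exists. This supplies $t = (l - 2)/2$ and settles the even case.

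If $l$ is odd, an even-length subword is required. I would simply look at the last multiplication of $w$, writing $w = u \cdot v$ with $l(u), l(v) \ge 1$ and $l(u) + l(v) = l$. Since $l$ is odd, exactly one of $l(u)$ and $l(v)$ is even (and, being positive, is at least $2$, hence lies in $[2, l-1]$). The corresponding factor is a subword of $w$ of even length, and the required $t \ge 0$ is then read off directly.

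I do not anticipate any nontrivial obstacle: $3$-boundedness of $w$ plays no role here, and the well-definedness of $\sigma(w)$ follows purely from the binary-tree structure of any word of length at least two together with the trivial fact that single generators are subwords of length $1$.
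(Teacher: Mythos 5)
Your proof is correct, and it takes a genuinely different route from the paper's. The paper descends through a $3$-bounded sprout sequence: it peels off the shorter factor (of length $1$ or $2$) of the last multiplication repeatedly, obtaining subwords of lengths $l, l-2, l-4, \ldots$ until some level has a factor of length $1$, which then produces the required subword of length $l-2t-1$; this argument leans on $3$-boundedness to know that each peeled factor has length at most $2$. You instead reduce well-definedness to a pure parity statement: the admissible lengths $l-2t-1$ are exactly those of parity opposite to $l$, so for even $l$ any leaf (a length-$1$ subword, which exists since $l\ge 2$) suffices, and for odd $l$ exactly one factor of the top multiplication $w=u\cdot v$ has even length in $[2,l-1]$ and serves as the witness. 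Your argument is shorter, and it correctly exposes that $3$-boundedness is irrelevant to well-definedness: $\sigma$ would make sense for an arbitrary word of length at least $2$. The paper's hypothesis is there only because $\sigma$ is subsequently applied exclusively to $3$-bounded words (via Propositions \ref{prop_repres} and \ref{prop_represZ}), and its sprout-descent proof has the side benefit of exhibiting the concrete chain of subwords of lengths $l-2, l-4, \ldots$ that later lemmas (e.g.\ Lemma \ref{lem_sparse}) exploit, whereas your proof yields only existence.
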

\begin{proof}  For a 3-bounded word $w$ by Definition \ref{def_bound} there exists an $l$-sprout sequence consisting only of $1$ and $2$, meaning that $w$ can be factored into a product of words of lengths $1$ and $2$. If the last multiplication in $w$ has a factor of length 1, then $w$ has a subword of length $l(w) -1 = l(w) - 2\cdot 0 -1$. Otherwise $w$ is a product of words of length $2$ and $l(w)-2$. If the latter has a factor of length 1, then $w$ has a subword of length $l(w) -2 -1 = l(w) - 2 \cdot 1 -1$. Otherwise, $w$ has a subword of length $l(w) - 2 \cdot 2$, to which the same argument can be applied. Since $l(w)$ is finite, by continuing this process we get  a word of length $l(w) - 2 t$ for some $t \ge 0$ such that it has a subword of length $1$. So,  $w$ has a subword of length $l(w) -2t -1$.
\end{proof}

\begin{remark}The definition above is formulated only in  $3$-mixing and $3$-sliding cases since by Propositions \ref{prop_repres} and \ref{prop_represZ} each word in these algebras is a sum of $3$-bounded words, so Lemma~\ref{Rem_step} is applicable.  
\end{remark}

\begin{definition}\label{def_step_def}
Let $\A$ be a  $3$-mixing or a $3$-sliding algebra and $\SS$ be its generating set. If there exists an irreducible 3-bounded word $w$ of length $l$ in $\SS$ such that 

1. $\sigma(w) = p$ and 

2. $\sigma(v) \ge p$ for each irreducible  3-bounded words $v$ of length~$l$,

then $w$ is called  a {\em $p$-step word}.

\end{definition}

The key idea behind these notions stems from the previously discussed concept of the gap $m_j - m_{j-1} = 2$ and can be formulated as follows.

\begin{lemma}\label{lem_ggap}
Let $\A$ be a finite dimensional  $3$-mixing $\F$-algebra, $\SS$ be its generating set, and $M=(m_1,\ldots,m_d)$ be the characteristic sequence of $\SS$. If there exists $j$ such that $m_{j} - m_{j-1} =2 $, then there exists an irreducible $3$-bounded $p$-step word of length $m_j$ with $p \ge 1$.
\end{lemma}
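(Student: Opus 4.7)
The plan is to produce a $p$-step word at length $m_j$ and use the gap $m_j - m_{j-1} = 2$ to force $p \ge 1$. For existence I would first note that $m_j$ is a term of $M$, so Proposition~\ref{cor_core}, Item~1 supplies an irreducible word of length $m_j$; since $m_j \ge m_{j-1} + 2 \ge 2$, Corollary~\ref{cor_exbound} (applied in the $3$-mixing case) upgrades this to an irreducible $3$-bounded word of the same length. Hence the set of irreducible $3$-bounded words of length $m_j$ in $\SS$ is non-empty, and setting $p$ to be the minimum value of $\sigma$ on this set gives, by Definition~\ref{def_step_def}, a $p$-step word of length~$m_j$.

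It remains to show $p \ge 1$. I would argue by contradiction: assume some irreducible $3$-bounded $w$ of length $m_j$ has $\sigma(w) = 0$, i.e.\ $w$ admits a subword $v$ of length $m_j - 1$. By Lemma~\ref{lem_1} write $w = u_1 u_2$ with both $u_i$ irreducible of positive length; by Lemma~\ref{lem_subduh} every proper subword of $w$ lies inside $u_1$ or $u_2$. Since a subword of $u_i$ has length at most $l(u_i)$, length $m_j - 1$ forces $\{l(u_1), l(u_2)\} = \{1, m_j - 1\}$, and moreover $v$ must equal the longer factor itself. In particular $v$ is irreducible, so Proposition~\ref{cor_core}, Item~2 puts $m_j - 1$ into $M$. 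This contradicts the hypothesis $m_j - m_{j-1} = 2$, which by monotonicity of $M$ prevents any term strictly between $m_{j-1}$ and $m_j$.

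The only delicate point I foresee is the subword analysis in the contradiction step: one must observe that a subword of $w$ of length $l(w) - 1$ cannot sit deep in the parse tree, because all subwords strictly contained inside a factor are shorter than that factor, and hence the length-$(m_j - 1)$ subword must coincide with one of the two top-level factors of $w$. After this combinatorial remark the contradiction is immediate, and the rest of the argument is a routine application of results already in the excerpt.
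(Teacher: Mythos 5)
Your proof is correct and uses the same ingredients as the paper (Proposition~\ref{cor_core}, Corollary~\ref{cor_exbound}, Lemmas~\ref{lem_1} and~\ref{lem_subduh}), but the step establishing $\sigma(w)\ge 1$ is routed differently. The paper argues directly: it invokes $3$-boundedness to force the shorter top-level factor of $w$ to have length at most $2$, then splits into three cases on the factor lengths, ruling out the length-$1$ possibility via the characteristic sequence and concluding that no subword of length $m_j-1$ can exist. You instead argue by contradiction from the definition of $\sigma$: a subword of length $m_j-1$ cannot sit strictly inside a factor (by Lemma~\ref{lem_subduh} every proper subword of $w=u_1u_2$ lies in a factor and is no longer than that factor), so it must coincide with a top-level factor of length $m_j-1$, which is irreducible by Lemma~\ref{lem_1}; Proposition~\ref{cor_core} then places $m_j-1$ into the non-decreasing sequence $M$ strictly between $m_{j-1}$ and $m_j$, which is impossible. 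Your version is marginally cleaner: it makes no use of $3$-boundedness in this step, avoids the case analysis entirely, and treats small values of $m_j$ uniformly where the paper needs separate remarks (e.g.\ the $m_j=2$ and $m_j=4$ subcases). Both arguments ultimately rest on the identical contradiction, so the difference is one of organization rather than substance; the existence half of your argument (non-emptiness of the set of irreducible $3$-bounded words of length $m_j$ and taking the minimum of $\sigma$ over it) coincides with the paper's.
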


\begin{proof}
Consider an arbitrary $3$-bounded irreducible word $w$ of length $m_j$. 
Our aim is to demonstrate that $\sigma(w) \ge 1$.  

Firstly, as $m_{j-1} \ge 0$, we have $l(w) =m_j \ge 2$ and $\sigma(w)$ is well-defined, see Definition~\ref{def_step}. Additionally this implies  $w =w_1 w_2$, where $w_1$ and $w_2$ are irreducible words of positive lengths. There are three possibilities:

1. $l(w_1) < l(w_2)$. In this case $l(w_1) < 3$ as all of $l$-sprout sequences of the word $w$  start with $l(w_1)$ and $w$ is $3$-bounded. However, if $l(w_1)=1$, $l(w_2)$ would be equal to $m_j -1$. By Corollary \ref{cor_core} this would mean that $m_j -1$ belongs to $M$, which is impossible as $M$ is non-decreasing and $m_{j-1} < m_{j} -1$. Thus, $l(w_1) =2, l(w_2) = l(w)-2$ and $w$ does not contain subwords of length $m_{j} -1 = l(w)-1$, i.e. $\sigma(w) > 0$. 

2.  $l(w_2) >  l(w_1)$ is similar to the first case.

3. $l(w_1) = l(w_2)$. In this case $l(w_1)=l(w_2) < 3$ as all of $l$-sprout sequences of the word $w$  start with $l(w_1)=l(w_2)$ and $w$ is $3$-bounded. However, if $l(w_1)=l(w_2)=1$, then $m_j=l(w) = 2$, which means that $m_{j-1} = 0$. It is impossible for $M$ to not have elements equal to $1$, thus $l(w_1)=l(w_2) =2$, $m_j=4$ and $w$ does not contain subwords of length $3=m_{j} -1$, i.e. $\sigma(w) > 0$.

So, in each of the cases $\sigma(w) > 0$. Then $p\ge \sigma(w) \ge 1$, since $p$ is the minimal value of~$\sigma(w)$.

Since $m_j$ is in the characteristic sequence, there exists an irreducible word of length $m_j$ in $\SS$ by Corollary \ref{cor_core}. By Corollary \ref{cor_exbound} there exists at least one $3$-bounded irreducible word of length $m_j$. From the observations above it follows that the step function of all such words is at least $1$. Thus for the word $w_0$ with minimal possible value of the step function  we have $\sigma (w_0) \ge 1$. Finally, it follows from Definition \ref{def_step} that $w_0$ is a $\sigma(w_0)$-step word. 
\end{proof}

The following lemmas establish important properties of the step function~$\sigma$.

\begin{lemma}\label{lem_sparse} Let $\A$ be a finite dimensional  $3$-mixing $\F$-algebra, $\SS$ be its generating set and $w$ be a word of length $l$ in $\SS$.
If $\sigma(w)=p > 0$ and $l > 2p +3$ then:
\begin{enumerate}
\item There exists an $l$-sprout sequence $R(w)$ of $w$ such that the elements of $R(w)$ belong to the set $\{1,2\}$  and the first $(p+1)$ elements of this sequence are $(2,\ldots,2,1)$.
\item For every $j \in \{1,\ldots,p\}$ there exists a subword $w'$  of $w$ such that  $l(w')= l-2j+2$ and $\sigma(w')=p-j+1$.
\end{enumerate}
\end{lemma}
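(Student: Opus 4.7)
The plan is to exploit the structure of the $3$-bounded sprout sequence of $w$: I will show that the hypothesis $\sigma(w) = p$ together with $l > 2p + 3$ forces the first $p + 1$ entries of the sprout to be $(2,\ldots,2,1)$, and then read off both claims directly. Throughout I will use that proper subwords of $w$ decompose as subwords of its two last-multiplication factors (Lemma \ref{lem_subduh}) and that $3$-boundedness passes to subwords (Lemma \ref{lem_subbound}).

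For Part 1, I would proceed by induction along the supporting sequence. If $l(w_1) = 1$, then $w_1'$ is a subword of $w$ of length $l - 1$, forcing $\sigma(w) = 0$ and contradicting $p > 0$; hence $l(w_1) = 2$ by $3$-boundedness. Assume inductively that $l(w_i) = 2$ for all $i \le k$, where $k < p$, so that $l(w_k') = l - 2k$. If $l(w_{k+1}) = 1$, then $w_{k+1}'$ would be a subword of $w$ of length $l - 2k - 1$, contradicting $\sigma(w) > k$; hence $l(w_{k+1}) = 2$. This establishes that the first $p$ entries of the sprout all equal $2$. Next pick any subword $v$ of $w$ with $l(v) = l - 2p - 1$, which exists because $\sigma(w) = p$. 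Since $l > 2p + 3$ gives $l(v) > 2$, $v$ cannot be contained in any $w_i$; iterated application of Lemma \ref{lem_subduh} therefore places $v$ inside $w_p'$. The word $v$ is a proper subword of $w_p'$ (since $l(v) = l(w_p') - 1$), so it lies in one of the two factors of $w_p'$'s last multiplication. The shorter factor $w_{p+1}$ has length at most $(l - 2p)/2$, which is strictly less than $l - 2p - 1$ by $l > 2p + 3$, so $v$ must lie in $w_{p+1}'$. This forces $l(w_{p+1}') \ge l - 2p - 1$ and hence $l(w_{p+1}) \le 1$, giving $l(w_{p+1}) = 1$. The rest of the sprout continues on $w_{p+1}'$, which as a subword of $w$ is $3$-bounded by Lemma \ref{lem_subbound}, so it can be completed within $\{1, 2\}$.

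For Part 2, I would take $w' = w_{j-1}'$ from the supporting sequence produced in Part 1, with the convention $w_0' = w$. The length equality $l(w') = l - 2(j-1) = l - 2j + 2$ is immediate. For $\sigma(w') \le p - j + 1$, the subword $v$ from Part 1 satisfies $v \subseteq w_p' \subseteq w_{j-1}' = w'$, and a direct computation gives $l(w') - 2(p - j + 1) - 1 = l - 2p - 1 = l(v)$, so $v$ witnesses the inequality. For $\sigma(w') \ge p - j + 1$, any subword $v'$ of $w'$ with $l(v') = l(w') - 2t - 1 = l - 2(t + j - 1) - 1$ is also a subword of $w$, so $\sigma(w) \le t + j - 1$, whence $t \ge p - j + 1$.

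I expect the delicate step to be threading $v$ first into $w_p'$ and then into $w_{p+1}'$: both inclusions depend crucially on the numerical slack $l > 2p + 3$, and tracking $v$ through nested factorizations while keeping the length inequalities aligned is where the argument will require the most care. Once the sprout pattern $(2,\ldots,2,1,\ldots)$ is secured, Part 2 reduces to straightforward length arithmetic using what is already known about $v$.
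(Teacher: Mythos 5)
Your proof is correct and rests on the same mechanism as the paper's: the absence of subwords of lengths $l-1, l-3,\ldots,l-2p+1$ forces the shorter factor to have length $2$ at each of the first $p$ levels of the sprout, while the subword of length $l-2p-1>2$ must thread into the longer factor at every level and so forces a length-$1$ shorter factor at level $p+1$. The paper packages this as an induction on $p$ (peeling off the factor $w_0$ of length $l-2$ and showing $\sigma(w_0)=p-1$), whereas you unroll it as a single induction along the supporting sequence and then read off Item 2 directly, but the two arguments are essentially identical.
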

\begin{proof}
We will prove this statement using induction on $p$.

The base. For $p=1$ the only possible $j$ is $1$ and Item 2 is trivial. For Item 1 consider an $l$-sprout sequence $R$ of $w$ which satisfies Definition \ref{def_bound}. By our choice it contains only the entries 1 and 2.  Since $w$ does not contain subwords of length $l-1$, the first element of $R$ is 2, meaning that $w = uv$ where $l(u) =2 \le l(v)$ or $l(v) =2 \le u$. Assume that the first case holds. As $w$ has a subword of length $l-3 >2$, by Lemma \ref{lem_subduh} this subword is a subword of $v$ (as $u$ is too short), meaning that $v$ is a product of two words of length $l-3$ and $(l-2)-(l-3)=1$. In particular, the second element of $R$ is 1. The second case is similar.

The step. Assume that the statement holds for $p=1,\ldots, q$ and consider $p=q+1$, $q \ge 1$. Since $w$ does not contain subwords of length $l-1$ but it is 3-bounded, one of the factors in the last multiplication of $w$ is a subword of $w$ with length $l-2$. Denote this subword by $w_0$. As a subword of $w$, $w_0$ is also 3-bounded. We will demonstrate that $\sigma(w_0)=p-1$.

Firstly, observe that $w_0$ must contain a subword of $w$ with length $l-2p-1 = (l-2) - 2(p-1) -1$. This follows from the fact that $l-2p-1 > 2$, hence the other factor of the last multiplication in $w$, which has length 2, cannot contain it.  Secondly, $w_0$ cannot contain subwords of length $l-3,\ldots,l-2p+1$ as they would be subwords of $w$ as well. Thus, we can apply the induction hypothesis to $w_0$.

For Item 1, since $l(w_0) = l(w) -2 = l-2 \ge 2p+1 \ge 2$, by Lemma \ref{lem_seqexp} the sequence $(2, R(w_0))$ would be an $l$-sprout sequence of $w$ with the first $p+1$ elements equal to $(2,\ldots,2,1)$, which allows to set $R(w)=(2, R(w_0))$.

For Item 2, the case $j=1$ is trivial, the case $j=2$ is already established by $w_0$ and for greater $j$ we can find a subword $w'$ of $w_0$ (which in turn means that $w'$ is a subword of $w$) with the length $(l-2) - 2(j-1) +2 = l - 2j +2$ and $\sigma(w')=(p-1) - (j-1)+1=p-j+1$.
\end{proof}

\begin{lemma}\label{lem_pgrow}
Let $\A$ be a finite dimensional  $3$-mixing $\F$-algebra, $\SS$ be its generating set and $w$ be a word  of length $l$ in $\SS$, such that $w$ is equal to a product of words $v$ and $w_0$, satisfying the following properties:
\begin{enumerate}
\item $v$ has length 2.
\item $w_0$ is $3$-bounded.
\item $\sigma(w_0)=p-1$
\item $l>2p +3$.
\end{enumerate}

Then $w$ is $3$-bounded and $\sigma(w)=p$.
\end{lemma}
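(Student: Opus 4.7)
The plan is to establish the two conclusions in order: first that $w$ is $3$-bounded, then that $\sigma(w) = p$, by carefully analyzing the set of lengths of subwords of $w$.

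For $3$-boundedness, I will start from an $l$-sprout sequence of $w_0$ with all entries in $\{1,2,3\}$, which exists since $w_0$ is $3$-bounded by hypothesis. Since $l(w_0) = l - 2 > 2p + 1 \geq l(v) = 2$, Lemma~\ref{lem_seqexp} applies and tells me that prepending $l(v) = 2$ to this sequence produces an $l$-sprout sequence of $w$ (regardless of whether $w = v w_0$ or $w = w_0 v$) whose entries still lie in $\{1,2,3\}$. By Definition~\ref{def_bound}, $w$ is $3$-bounded.

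To establish $\sigma(w) \geq p$, I will apply Lemma~\ref{lem_subduh} to enumerate the possible lengths of proper subwords of $w$: every such length is the length of a subword of $v$ or of $w_0$, hence lies in $\{1,2\} \cup \{\text{subword lengths of } w_0\}$. The lengths that must be avoided are $l - 1, l - 3, \ldots, l - 2p + 1$; the inequality $l > 2p + 3$ ensures each of these exceeds $2$, so they cannot arise from $v$. The length $l - 1$ also exceeds $l(w_0) = l - 2$ so cannot arise from $w_0$, and the remaining lengths $l - 3, l - 5, \ldots, l - 2p + 1$ coincide with $l(w_0) - 2t - 1$ for $t = 0, 1, \ldots, p - 2$, which are precisely the lengths forbidden in $w_0$ by $\sigma(w_0) = p - 1$. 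Conversely, $\sigma(w_0) = p - 1$ supplies a subword of $w_0$ of length $l(w_0) - 2(p - 1) - 1 = l - 2p - 1$, which is automatically a subword of $w$, giving $\sigma(w) \leq p$.

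The main bookkeeping risk is the index shift between $w$ and $w_0$: the forbidden steps $0, 1, \ldots, p - 1$ for $w$ correspond, after subtracting $l(v) = 2$, to steps $-1, 0, 1, \ldots, p - 2$ in $w_0$, and one must verify that the spurious step $-1$ (i.e., the length $l - 1$) is excluded by the size bound $l(w_0) < l - 1$ rather than by $\sigma(w_0) = p - 1$. The hypothesis $l > 2p + 3$ is precisely what is needed to guarantee simultaneously that every forbidden length exceeds $2$ and that the prepending via Lemma~\ref{lem_seqexp} is valid.
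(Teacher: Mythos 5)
Your proposal is correct and follows essentially the same route as the paper: prepend $l(v)=2$ to a $3$-bounded $l$-sprout sequence of $w_0$ via Lemma~\ref{lem_seqexp} to get $3$-boundedness, then use Lemma~\ref{lem_subduh} together with $l>2p+3$ to show the forbidden lengths $l-1,l-3,\ldots,l-2p+1$ cannot occur while $l-2p-1$ does. The only (harmless) difference is that the paper takes the specific sequence $R(w_0)$ supplied by Lemma~\ref{lem_sparse}, whereas you use an arbitrary $3$-bounded sprout sequence of $w_0$, which in fact sidesteps the edge case $p=1$ where Lemma~\ref{lem_sparse} does not apply.
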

\begin{proof}
Consider the $l$-sprout sequence $R(w_0)$ obtained in Lemma \ref{lem_sparse}. By Lemma \ref{lem_seqexp} the sequence $(2, R(w_0))$ would be an $l$-sprout sequence of $w$, which means that $w$ is also $3$-bounded.  

Note that $w$ contains a subword of length $l-2p-1$ as its subword $w_0$ contains a subword of length $ (l-2) - 2(p-1) -1$. Also it does not contain any subwords of length $l-1$, and subwords of lengths $l-3,\ldots,l-2p+1$ would have been subwords of $w_0$ by Lemma \ref{lem_subduh} since $l-2p >2 = l(v)$. However, $w_0$ does not have subwords of such lengths, which means $\sigma(w)=p$.
\end{proof}

The final set of lemmas covers properties of $3$-bounded words specific to Malcev algebras.

\begin{lemma}\label{lem_ac}
Let $\A$ be a Malcev algebra, $\SS$ be its generating set and $w$ be an irreducible 3-bounded word in $\SS$.
If $\hat{w}$ is a word obtained from $w$ using transpositions of factors in the products, then $\hat{w}$ is also $3$-bounded and irreducible, while $\sigma(w)=\sigma(\hat{w})$.
\end{lemma}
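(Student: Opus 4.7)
The plan rests on a single fundamental observation: in a Malcev algebra the identity $xy=-yx$ from Definition \ref{def:Mal}, Item 1, implies that every transposition of factors in any internal product of $w$ only flips the sign of that product. Therefore, if $\hat w$ is obtained from $w$ by $k$ successive factor transpositions, then as elements of $\A$ one has $\hat w=(-1)^k w$. In particular, $\hat w\sim w$ in the sense of the equivalence relation preceding Lemma \ref{lem_sim}, so irreducibility of $\hat w$ follows immediately from irreducibility of $w$ by Lemma \ref{lem_sim}, Item 3.

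For the preservation of $\sigma$, I would argue that transpositions act on the parenthesization tree of $w$ by swapping left and right children at some internal nodes, and such a swap leaves the length of each child (hence the length of every subword) untouched. Consequently the multisets of subword lengths of $w$ and $\hat w$ coincide. Recalling Definition \ref{def_step}, the step function $\sigma$ depends only on which subword lengths occur, so $\sigma(\hat w)=\sigma(w)$. The same length-preservation argument handles the well-definedness of $\sigma(\hat w)$, since its existence was only established in Lemma \ref{Rem_step} under the assumption of $3$-boundedness, which I treat next.

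For $3$-boundedness I would inspect Definition \ref{def_sprout}. At each step the sprout construction picks the shorter of the two factors of a last multiplication and recurses on the longer one. Since a transposition permutes these two factors without altering their lengths, we may mirror every step of an $l$-sprout sequence of $w$ by the analogous step for $\hat w$ (the subwords visited may be different, but they are related by transpositions and hence have identical lengths). Thus an $l$-sprout sequence of $\hat w$ is obtained with exactly the same multiset of entries as one for $w$; if the entries for $w$ lie in $\{1,2\}$ as required by Definition \ref{def_bound}, the same holds for $\hat w$, so $\hat w$ is $3$-bounded.

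The only real bookkeeping obstacle is making the informal statement \emph{``transpositions act on the tree by left-right swaps, preserving all subword lengths''} precise; this can be done either by induction on $l(w)$ (handling a single transposition at a time and distinguishing whether it takes place at the root or inside a factor), or by a direct argument on the parenthesization tree. Neither step requires more than the defining anticommutativity of Malcev algebras, which is why the Malcev identity (Definition \ref{def:Mal}, Item 2) plays no role here — this is a lemma about anticommutative algebras in general.
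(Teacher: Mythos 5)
Your argument is correct and is essentially the paper's own proof: the paper likewise notes that $\hat{w}=\pm w$ by anticommutativity (hence irreducible, with the same length), and that transpositions do not affect the set of possible subword lengths or possible $l$-sprout sequences, giving $3$-boundedness and $\sigma(w)=\sigma(\hat{w})$. Your version merely spells out the bookkeeping the paper leaves implicit, and your closing observation that only Item 1 of Definition \ref{def:Mal} is used is accurate.
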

\begin{proof}
As $\hat{w} = \pm w$ by anticommutativity and has the same length, the word $\hat{w}$ is also irreducible. Also note that the application of a transposition of factors does not affect the set of possible lengths of subwords or possible $l$-sprout sequences. Thus, $\hat{w}$ is also 3-bounded and $\sigma(w)=\sigma(\hat{w})$.
\end{proof}

\begin{lemma}\label{lem_smolswap2}
Let $\A$ be a Malcev algebra over a field $\F$ with $\char \F \neq 2$ and $\SS$ be its generating set. If $a,b,c \in \SS$, $w'$ is a $3$-bounded word of  length at least $2$ in $\SS$, then the following items are true: 
\begin{itemize}
\item $\sigma( (a b) (c w') )= 1$.

\item All of the summands on the right-hand side of Identity (\ref{eq_smolswap}) are $3$-bounded.

\item $\sigma ( (a c) (b w') ) = 1$.

\item $\sigma(z) = 0$ where $z$ is any summand other than  $(a c) (b w')$  on the right-hand side of Identity (\ref{eq_smolswap}).

\end{itemize}
\end{lemma}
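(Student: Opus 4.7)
The plan is to set $l := l(w')+3$ and note that every word appearing in Identity~(\ref{eq_smolswap}) has length exactly $l$, so the four claims reduce to a direct analysis of subword lengths together with an explicit construction of sprout sequences. I would dispatch item~2 first, because the step function $\sigma$ is only defined on $3$-bounded words, so items~1, 3, and~4 implicitly depend on it.

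For item~2, I would fix a $3$-bounded $l$-sprout sequence $S_0$ of $w'$, which exists by hypothesis. For $(ab)(cw')$ (and symmetrically for $(ac)(bw')$), two iterated applications of Lemma~\ref{lem_seqexp}---first to the product $c\cdot w'$, then to $(ab)\cdot(cw')$---produce the $l$-sprout sequence $(2,1,S_0)$ whose entries are all $\le 2$. Each of the other six summands has the form $x\cdot\xi$ with $x\in\{a,b,c\}$ of length~$1$ and $\xi$ of length $l-1$; I would verify that $\xi$ admits an $l$-sprout sequence beginning with $(1,1,S_0)$ (when the inner last multiplication isolates a letter, as in $(aw')b$ or $(w'b)c$) or with $(1,2,S_0)$ (when it isolates a length-$2$ word, as in $(ba)w'$ or $(ca)w'$), and then prepend $1 = l(x)$ via Lemma~\ref{lem_seqexp} to obtain a $3$-bounded $l$-sprout sequence of the summand.

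For items~1 and~3, I would use that both $(ab)(cw')$ and $(ac)(bw')$ factor at the last multiplication into a length-$2$ word and a length-$(l(w')+1)$ word. Lemma~\ref{lem_subduh} then forces the set of proper subword lengths into $\{1,2,l(w'),l(w')+1\}$ together with the proper subword lengths of $w'$, each of which is at most $l(w')-1$. Consequently no subword of length $l-1$ exists, giving $\sigma\ge 1$; on the other hand $w'$ itself is a proper subword of length $l(w') = l-3$, realizing $t=1$ in Definition~\ref{def_step}, so $\sigma = 1$. Item~4 is then immediate: every remaining summand has the shape $x\cdot\xi$ with $\xi$ of length $l-1$, so $\xi$ witnesses $t=0$ and hence $\sigma = 0$.

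Conceptually the argument is elementary; the one point that wants care is the $3$-boundedness check for the mixed summands such as $b((ca)w')$ and $c((ba)w')$, where the comparison of $l(ca)=2$ with $l(w')$ determines which factor opens the sprout of the inner product $(ca)w'$ or $(ba)w'$. In every branch, however, $w'$ remains $3$-bounded by hypothesis, so $S_0$ extends to the full sprout of the summand and the entries prepended to it stay in $\{1,2\}$.
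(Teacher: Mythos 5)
Your proposal is correct and follows essentially the same route as the paper's proof: $3$-boundedness of each summand is obtained by prepending entries from $\{1,2\}$ to a $3$-bounded sprout sequence of $w'$ via Lemma~\ref{lem_seqexp}, and the values of $\sigma$ are read off from the possible subword lengths (the paper is terser on items 1, 3, 4, but the content is the same). The only, harmless, slip is in the bookkeeping for summands such as $b((ca)w')$: the sprout sequence of the inner factor $(ca)w'$ is $(2,S_0)$ rather than $(1,2,S_0)$, so prepending $l(b)=1$ yields $(1,2,S_0)$ exactly as in the paper, not $(1,1,2,S_0)$.
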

\begin{proof}
It is straightforward to see that $\sigma ((a b) (c w') )  = 1$. Denote by $R$ a 3-bounded $l$-sprout sequence of $w'$.

We   check all the elements  on the right-hand side  of (\ref{eq_smolswap}) one by one.

1. $(ac)(b w')$ has the same set of lengths of subwords as $(a b) (c w')$ and the same possible $l$-sprout sequences, which means that it is $3$-bounded and the values of their step function are equal. Hence, they are equal to~1.

2. $a((w'b) c)$ is 3-bounded as it has an  $l$-sprout sequence $(1,1,1,R)$. Additionally,  $\sigma(a((w'b) c))=0$.

3. $b ((ca) w') $  is 3-bounded as it has an $l$-sprout sequence $(1,2,R)$. Additionally,  $\sigma(b ((ca) w') )=0$.

4. $c ((aw')  b)$ is 3-bounded as it has an $l$-sprout sequence $(1,1,1,R)$. Additionally,  $\sigma(c ((aw')  b))=0$.

5. $a ((w'c) b)$ is 3-bounded as it has an $l$-sprout sequence $(1,1,1,R)$. Additionally,  $\sigma(a ((w'c) b) )=0$.

6. $ c ((ba) w')$ is 3-bounded as it has an $l$-sprout sequence $(1,2,R)$. Additionally,  $\sigma(c ((ba) w')  )=0$.

7. $b ((aw') c)$ is 3-bounded as it has an $l$-sprout sequence $(1,1,1,R)$. Additionally, $\sigma(b ((aw') c))=0$.

\end{proof}

\begin{lemma}\label{lem_swap2}
Let $\A$ be a Malcev algebra over a field $\F$ with $\char \F \neq 2$ and $\SS$ be its generating set. If $a,b,c,d \in \SS$, and $w'$ is a 3-bounded word of  length at least $2$ in $\SS$ and $\sigma( (a b) ((cd) w') ) = p $ then the following items are true:
\begin{itemize}
\item All of the summands on the right-hand side of the identity (\ref{eq_swap}) are $3$-bounded.

\item $\sigma ( (c d) ((ab) w') ) =p$.

\item $\sigma(z) < p $ where $z$ is any summand other than  $(c d) ((ab) w')$  in the right-hand side of the identity~(\ref{eq_swap}).

\end{itemize}

\end{lemma}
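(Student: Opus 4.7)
The plan is to analyze the right-hand side of (\ref{eq_swap}) summand by summand. Set $l_0 = l(w') \ge 2$ and $l = 4 + l_0 = l((ab)((cd)w'))$. The first observation is that the subwords of $(ab)((cd)w')$ have lengths lying in $\{1, 2, l_0, 2+l_0\}$ together with the subword-lengths of $w'$, all of which are at most $l_0$; this follows by iterating Lemma~\ref{lem_subduh}. Consequently neither $l-1 = 3+l_0$ nor $l-3 = 1+l_0$ occurs as a subword length, which forces $\sigma((ab)((cd)w')) \ge 2$ and in particular $p \ge 2$. This preliminary bound is what makes the whole argument viable.

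Next I would verify 3-boundedness of each of the eleven summands by exhibiting an explicit $l$-sprout sequence. In each summand, repeatedly taking the shorter factor of the last multiplication produces a sequence of entries $1$ or $2$ until one reaches the subword $w'$, at which point the 3-bounded $l$-sprout of $w'$ (available by hypothesis) may be appended via Lemma~\ref{lem_seqexp}. A short case-by-case check across the eleven terms confirms 3-boundedness throughout.

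For the step-function values I would argue as follows. The summand $-(cd)((ab)w')$ has structurally the same shape as $(ab)((cd)w')$ up to the swap $(a,b) \leftrightarrow (c,d)$, so its set of subword lengths is identical, giving $\sigma = p$. The remaining ten summands split into two types. Eight of them begin with a single generator $a$, $b$, $c$ or $d$ multiplied on the right by a block of length $l-1 = 3+l_0$; that block itself is a subword, so $\sigma = 0$ for all eight. The two exceptional summands $((cw')b)(da)$ and $-(ab)((dw')c)$ have no subword of length $l-1$ (their last-multiplication factors have lengths $2$ and $2+l_0$), but contain $(cw')$ respectively $(dw')$ as subwords of length $1+l_0 = l-3$, giving $\sigma = 1$. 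Since $p \ge 2$, in every case $\sigma(z) \le 1 < p$ for $z$ different from the first summand. The main obstacle here is simply the bookkeeping across all eleven terms and verifying that no summand hides an unexpected subword of length $l-1$ or $l-3$ that would raise $\sigma$; these exclusions follow systematically from iterating Lemma~\ref{lem_subduh} on the (small) set of outer multiplicative factors in each summand.
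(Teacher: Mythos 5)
Your proposal is correct and follows essentially the same route as the paper: a case-by-case verification over the eleven summands of (\ref{eq_swap}), exhibiting for each an explicit $l$-sprout sequence ending in a 3-bounded sprout of $w'$ and reading off $\sigma$ from the possible subword lengths, with the crucial observation that $p\ge 2$ so that the values $\sigma=0$ (eight summands) and $\sigma=1$ (the two summands $((cw')b)(da)$ and $(ab)((dw')c)$) are all strictly below $p$. The only difference is cosmetic: you group the summands by structural type and spell out the subword-length argument for $p\ge 2$, which the paper merely asserts as easy.
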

\begin{proof}
It is easy to see that $p\ge 2$. Denote by $R$ a 3-bounded $l$-sprout sequence of $w'$.
We consider all the elements on the right-hand side of (\ref{eq_swap}) one by one and check that each statement holds true for each word.

1. $(c d) ((ab)  w')$ has the same set of lengths of subwords as $(a b) ((cd) w') $ and the same possible $l$-sprout sequences, which means that it is $3$-bounded and the values of step function are equal.

2. $( (cw') b) (d a)$   is 3-bounded as it has an $l$-sprout sequence $(2,1,1,R)$. Additionally,  $\sigma( ( (cw') b) (d a) )=1$.

3. $a ((bd) (cw'))$ is 3-bounded as it has an $l$-sprout sequence $(1,2,1,R)$. Additionally, $\sigma( a ((bd) (cw')) )=0$.

4. $ b((d(cw'))  a)$ is 3-bounded as it has an $l$-sprout sequence $(1,1,1,1,R)$. Additionally, $\sigma( b((d(cw'))  a) )=0$.

5. $d ((w' (ab))  c)$ is 3-bounded as it has an $l$-sprout sequence $(1,1,2,R)$. Additionally,  $\sigma(  d ((w' (ab))  c) )=0$.

6. $ (ab) ((dw')  c)$  is 3-bounded as it has an $l$-sprout sequence $(2,1,1,R)$. Additionally,  $\sigma( (ab) ((dw')  c) )=1$.

7. $c ((d b) (a w') ) $ is 3-bounded as it has an $l$-sprout sequence $(1,2,1,R)$. Additionally,  $\sigma( c ((d b) (a w') ) )=0$.

8. $c( d ((w' a) b)) $ is 3-bounded as it has an $l$-sprout sequence $(1,1,1,1,R)$. Additionally, $\sigma( c( d ((w' a) b))  )=0$.

9. $c ( b((ad ) w'))$   is 3-bounded as it has an $l$-sprout sequence $(1,1,2,R)$. Additionally,  $\sigma(c ( b((ad ) w'))  )=0$.

10. $c( a((dw') b))$ is 3-bounded as it has an $l$-sprout sequence $(1,1,1,1,R)$. Additionally,  $\sigma(c( a((dw') b)))=0$.

11.  $c ((d w')  (ab))$  3-bounded as it has an $l$-sprout sequence $(1,2,1,R)$. Additionally,  $\sigma(c ((d w')  (ab)))=0$.

So, each condition is justified, and the lemma is proved.
\end{proof}

\begin{lemma}\label{lem_bigswap}
Let $\A$ be a Malcev algebra  over a field $\F$ with $\char \F \neq 2$, $\SS$ be its generating set and $w$ be a $p$-step word of length $l$ in $\SS$ with $p \ge 2$ which has the form  $$w = (s_1 s_2) ( (s_3 s_4) ( \ldots  ( (s_{2p-1} s_{2p}) (s_{2p+1} w') ) \ldots )),$$ where $s_i \in \SS$ and $w'$ has the length $(l-2p-1)$. Then the word  $$(s_3 s_4) (\ldots  ( (s_{2p-1} s_{2p})  ((s_1 s_2)  (s_{2p+1} w') ) )\ldots )$$ is also $p$-step and equivalent to $w$.
\end{lemma}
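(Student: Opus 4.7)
The plan is to push the block $(s_1 s_2)$ inward one position at a time using Identity~\eqref{eq_swap} from Lemma~\ref{lem_swap2}, until it finally multiplies $(s_{2p+1} w')$ directly. Each application of~\eqref{eq_swap} is an identity in $\A$, so inserting it inside the unchanged outer context of $w$ gives another identity in $\A$. The main point is to show that the ten ``garbage'' summands produced by each swap all lie in $\L_{l-1}(\SS)$, so that the successive reshuffled words remain $\sim$-equivalent to~$w$.

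More concretely, for $j = 0, 1, \ldots, p-1$ I will define a word $w^{(j)}$ in which $(s_1 s_2)$ sits at depth~$j$ inside the nested skeleton, with $w^{(0)} = w$ and $w^{(p-1)}$ equal (up to a sign $(-1)^{p-1}$) to the target. The induction step from $w^{(j)}$ to $w^{(j+1)}$ will apply Identity~\eqref{eq_swap} to the inner substructure $(s_1 s_2)\bigl((s_{2j+3} s_{2j+4}) u_j\bigr)$ rooted at depth~$j$, where $u_j$ is the inward tail $(s_{2j+5} s_{2j+6})(\ldots (s_{2p+1} w')\ldots)$. A direct subword count, mirroring the argument that $\sigma(w) = p$, shows that this inner substructure has $\sigma$-value $p - j \geq 2$, so Lemma~\ref{lem_swap2} applies and produces, up to sign, the required reshuffled substructure of $w^{(j+1)}$ together with ten garbage summands of $\sigma$-value at most $p - j - 1$ at the inner length $l - 2j$.

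To discard the garbage I will run a $\sigma$-accounting in the spirit of Lemma~\ref{lem_pgrow}: prefixing a $3$-bounded word of length $l - 2j$ by a single length-two block raises $\sigma$ by exactly one, so after reinserting the $j$ unchanged outer length-two blocks $(s_3 s_4), \ldots, (s_{2j+1} s_{2j+2})$, each garbage term becomes a $3$-bounded word of length $l$ with $\sigma \leq p - 1$. Since $w$ is $p$-step, every irreducible $3$-bounded word of length $l$ has $\sigma \geq p$; hence these garbage terms with $\sigma < p$ cannot be irreducible, must therefore be reducible, and so lie in $\L_{l-1}(\SS)$. This gives $w^{(j)} \sim -\,w^{(j+1)}$ for each $j$, and iterating yields $w \sim (-1)^{p-1}\, w^{(p-1)}$, which is exactly the target up to a nonzero scalar.

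It remains to verify that $w^{(p-1)}$ is itself $p$-step. Its nested skeleton is identical to that of $w$ with only the order of the $(s_{2i-1}s_{2i})$ blocks changed, so an explicit $l$-sprout sequence made of $2$'s and a trailing $1$ shows it is $3$-bounded with $\sigma = p$; irreducibility transfers from $w$ via Lemma~\ref{lem_sim}(3); and since $p$ is the minimum of $\sigma$ over all irreducible $3$-bounded words of length $l$, the target inherits the $p$-step property. The main obstacle I anticipate is the uniform $\sigma$-bookkeeping throughout the induction: checking at every stage that the inner substructure matches the shape required by Lemma~\ref{lem_swap2}, that its $\sigma$ equals $p - j$ exactly, and that each outer length-two multiplication raises $\sigma$ by precisely one. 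These are all subword-counting arguments, but their careful synchronization is what drives the whole induction.
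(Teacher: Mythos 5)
Your proposal is correct and follows essentially the same route as the paper: an induction that moves the block $(s_1 s_2)$ inward one level at a time via Identity~\eqref{eq_swap}, uses Lemma~\ref{lem_swap2} to bound the step function of the garbage summands, applies the $\sigma$-accounting of Lemma~\ref{lem_pgrow} when reattaching the outer length-two blocks, and kills the garbage by invoking the $p$-step minimality of $\sigma$ among irreducible $3$-bounded words of length $l$. The paper organizes this as showing each intermediate word $w_t$ is itself $p$-step and equivalent to $w$, which is the same bookkeeping you describe.
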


\begin{proof}

We proceed by the induction on the index $t$ such that $$w_t=(s_3 s_4) (\ldots ( (s_{2t+1} s_{2t+2}) ( (s_{1} s_{2}) ( (s_{2t+3} s_{2t+4}) ( \ldots  ( (s_{2p-1} s_{2p}) (s_{2p+1} w') ) \ldots )$$ is a $p$-word and equivalent to $w$ for all $t=2,\ldots,p-1$.

The base. For $t=1$ we have $w_1 = (s_3 s_4) ( (s_1 s_2) ( \ldots  ( (s_{2p-1} s_{2p}) (s_{2p+1} w') ) \ldots )$. By Lemma \ref{lem_swap} $w = w_1 + x$, where $x$ is a linear combination of words with the value of step function being $0$ or $1$. As $w$ is a $p$-step this means that summands of $x$ are reducible and $x \in \L_{l-1}$. Thus, $w_1 \sim w$, and $w_1$ is irreducible by Lemma \ref{lem_sim}. Additionally, for the word  $v_1 =  (s_5 s_6) (\ldots  ((s_{2p-1} s_{2p}) (s_{2p+1} w') ) \ldots ) $ we have $\sigma(v_1) = p-2$ by Lemma \ref{lem_sparse}  Item 2, as it is the only subword of $w$ of the length $l-4$. 
Applying Lemma \ref{lem_pgrow}  to $  (s_1 s_2) v_1$ and $w_1 = (s_3 s_4) ( (s_1 s_2) v_1)$  we get that the word $w_1$ is $3$-bounded and $\sigma(w_1)=p$, which means $w_1$ is also a $p$-step word.

The step. Assume that the statement holds for $t=T$, with $T<p-1$. For $t=T+1$ consider the word $w_{T+1} =$ $$= (s_3 s_4) (\ldots ( (s_{2T+3} s_{2T+4}) ( (s_{1} s_{2}) ( (s_{2T+5} s_{2T+6}) ( \ldots  ( (s_{2p-1} s_{2p}) (s_{2p+1} w') ) \ldots ).$$

The word $w_{T+1} $ has a subword  $u_{T+1}=(s_{2T+3} s_{2T+4}) ( (s_{1} s_{2}) v_{T+1})$, where $$v_{T+1} =  (s_{2T+5} s_{2T+6}) ( \ldots  ( (s_{2p-1} s_{2p}) (s_{2p+1} w')  \ldots).$$

Note that by the induction hypothesis the word $$w_T = (s_3 s_4) (\ldots ( (s_{2T+1} s_{2T+2}) ( (s_{1} s_{2}) ( (s_{2T+3} s_{2T+4}) ( \ldots  ( (s_{2p-1} s_{2p}) (s_{2p+1} w') ) \ldots )$$ is a $p$-step and is  equivalent to $w$. The word $w_T$ has its own subword of length equal to $l(u_{T+1})$, that is $(s_{1} s_{2}) ( (s_{2T+3} s_{2T+4}) v_{T+1} ) =: u'_T$. By Lemma \ref{lem_sparse} Item 2, $\sigma(u'_T) = p- T \ge 2$ as it is the only subword of $w_T$ of length $l-2T$.

By Lemma \ref{lem_swap},
\begin{align}\label{eq_swapsh}
 u'_{T} = u_{T+1} +f_1 x_1 + \ldots + f_r x_r
\end{align}
 where $f_i = \pm 1$ and $x_i$ are the other summands from Identity \ref{eq_swap}. By Lemma \ref{lem_swap2}, each of $x_i$ is $3$-bounded and $\sigma(u'_{T}) = p - T > 1 \ge \sigma (x_i)$, while $\sigma(u_{T+1} ) = p -T$.  By sequentially multiplying the equality \eqref{eq_swapsh} by $(s_{2T+1} s_{2T+2}), \ldots, (s_3 s_4)$ on the left we would get $w_T = w_{T+1} +f_1 X_1 + \ldots + f_r X_r$, where all words are $3$-bounded and $\sigma(w_{T+1}) = p$. For each $i \in \{1, \ldots, r\}$ we can consider sequence $x_i^{(j)}$, defined inductively: $x_i^{(0)} = x_i$ and $x_i^{(j+1)} = (s_{2T+1-2j} s_{2T+2-2j}) x_i^{(j)}$, $j \le T-1$. It is straightforward to see that $X_i = x_i^{(T)}$.  By  Lemma \ref{lem_pgrow} $\sigma (x_i^{(j+1)}) = \sigma (x_i^{(j)})+1$, meaning that $\sigma (X_i) = \sigma( x_i^{(T)})  = \sigma( x_i^{(T-1)})+1 = \ldots = \sigma( x_i^{(0)}) +T =  \sigma(x_i) +T   < p-T+T = p$. Since $\sigma (X_i) < p$, these words cannot be irreducible and $w_{T+1} \sim w_T \sim w$.  Thus, $w_{T+1}$ is a $p$-step word and it is equivalent to~$w$.

This induction allows us to conclude that the word $$(s_3 s_4) ( \ldots  ( (s_{2p-1} s_{2p}) ((s_1 s_2) (s_{2p+1} w') ) ) \ldots )$$ corresponding to $t=p-1$ is a $p$-step word and is equivalent to~$w$.
\end{proof}

After all the preparations above we are ready to prove the key result of this section.

\begin{proposition}\label{prop_fin}
Let $\A$ be a Malcev algebra  over a field $\F$ with $\char \F \neq 2$, $\SS$ be its generating set and $w$ be a $p$-step word in $\SS$ of length $l$ with $p \ge 1$. Then there are at least two irreducible words of length $l-2$ linearly independent modulo~$\L_{l-3}$. 
\end{proposition}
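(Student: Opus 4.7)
The plan is to exhibit two irreducible words $v_0$ and $v_1$ of length $l-2$ and prove that they are linearly independent modulo $\L_{l-3}$. First, by Lemma \ref{lem_sparse}, $w$ may be written (up to sign, using anticommutativity) as
\[
w = (s_1 s_2)((s_3 s_4)(\cdots((s_{2p-1} s_{2p})(s_{2p+1} w'))\cdots))
\]
with $s_i \in \SS$ and $l(w') = l - 2p - 1$. I take $v_0$ to be the subword of $w$ obtained by stripping off the outermost factor $(s_1 s_2)$: thus $v_0 = (s_3 s_4)((s_5 s_6)(\cdots))$ for $p \ge 2$ and $v_0 = s_3 w'$ for $p = 1$. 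Since $v_0 \in \L_{l-3}$ would force $w = (s_1 s_2) v_0 \in \L_{l-1}$, contradicting irreducibility of $w$, the word $v_0$ is irreducible.

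For $v_1$ I migrate $(s_1 s_2)$ one level inward. For $p \ge 2$, write $v_0 = (s_3 s_4) V$ where $V$ (of length $l - 4$) is the part interior to $(s_3 s_4)$. Applying Lemma \ref{lem_swap} with $(a,b,c,d) = (s_1,s_2,s_3,s_4)$ and the lemma's $w'$ replaced by $V$, and using Lemma \ref{lem_swap2} to see that the ten remaining summands have step value strictly less than $p$ and are hence reducible by the $p$-step property of $w$, I obtain
\[
w \;\sim\; -(s_3 s_4)((s_1 s_2) V) \pmod{\L_{l-1}}.
\]
Set $v_1 = (s_1 s_2) V$; then $(s_3 s_4) v_1 \sim -w$ is irreducible, and so is $v_1$ (otherwise $(s_3 s_4) v_1 \in \L_{l-1}$). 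For $p = 1$, the analogous construction based on Lemma \ref{lem_smolswap} with $(a,b,c) = (s_1,s_2,s_3)$ together with Lemma \ref{lem_smolswap2} yields $w \sim -(s_1 s_3)(s_2 w')$ and I set $v_1 = s_2 w'$, irreducible by the same argument.

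To prove linear independence, I will assume $\alpha v_0 + \beta v_1 \in \L_{l-3}$ and multiply by $(s_3 s_4)$ on the left (or by $(s_1 s_3)$ when $p = 1$), sending the relation into $\L_{l-1}$. The crucial input is that the diagonal self-product lies in $\L_{l-1}$: for $p \ge 2$, Lemma \ref{lem_swap} with $(a,b,c,d) = (s_3,s_4,s_3,s_4)$ applied to $(s_3 s_4)((s_3 s_4) V)$ gives
\[
2 \cdot (s_3 s_4)((s_3 s_4) V) = R,
\]
where $R$ is the sum of the ten remaining terms, each of which is $3$-bounded with step value strictly less than $p$ by Lemma \ref{lem_swap2}; these are reducible by the $p$-step property of $w$, so $R \in \L_{l-1}$, and the hypothesis $\char \F \ne 2$ yields $(s_3 s_4)((s_3 s_4) V) \in \L_{l-1}$. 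The case $p = 1$ is analogous using Lemma \ref{lem_smolswap} with $(a,b,c) = (s_1,s_3,s_3)$ to show $(s_1 s_3)(s_3 w') \in \L_{l-1}$. The multiplied relation then collapses to $\beta \cdot (s_3 s_4) v_1 \in \L_{l-1}$; since $(s_3 s_4) v_1 \sim -w$ is irreducible, $\beta = 0$. Then $\alpha v_0 \in \L_{l-3}$ with $v_0$ irreducible forces $\alpha = 0$.

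The main obstacle is the careful bookkeeping of step values. One must verify that the self-swapped word $(s_3 s_4)((s_3 s_4) V)$ has $\sigma$-value exactly equal to $p$, so that its ten remainder terms from the swap identity necessarily satisfy $\sigma < p$ and are therefore reducible. This verification amounts to comparing sprout sequences: the word $(s_3 s_4)((s_3 s_4) V)$ shares the same sprout pattern $(2,2,\ldots,2,1,\ldots)$ with $w$, because replacing the outer $(s_1 s_2)$ of $w$ with a second copy of $(s_3 s_4)$ preserves the chain of length-two factors down to the innermost $(s_{2p+1} w')$. This structural comparison is exactly what Lemma \ref{lem_sparse} makes available, and the analogous sprout comparison underlies the $p = 1$ version using $(s_1 s_3)(s_3 w')$.
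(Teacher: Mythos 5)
Your proof is correct, but it takes a genuinely different route from the paper's. The paper first drives the outer factor $(s_1 s_2)$ all the way to the innermost level by the $p$-fold induction of Lemma \ref{lem_bigswap}, then applies Lemma \ref{lem_smolswap} once at the bottom to produce two irreducible words of length $l-2$ differing in a single innermost letter ($s_2$ versus $s_{2p+1}$); independence is then proved by showing that a dependence would force the repeated-letter word $(s_1 s_2)(s_2 w')$ to be irreducible, which the Malcev identity together with $s_2 s_2 = 0$ rules out. You instead work entirely at the outermost level: a single application of Lemma \ref{lem_swap} yields $v_1 = (s_1 s_2)V$ alongside $v_0 = (s_3 s_4)V$, and your independence argument replaces the repeated-letter degeneracy by the \emph{diagonal} instance $(a,b,c,d) = (s_3,s_4,s_3,s_4)$ of identity \eqref{eq_swap}, which gives $2\,(s_3 s_4)\bigl((s_3 s_4)V\bigr) = R$ with $R$ a sum of low-step, hence reducible, terms --- so $(s_3 s_4)v_0 \in \L_{l-1}$ while $(s_3 s_4)v_1 \sim \pm w$ is not. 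This is where $\char \F \neq 2$ enters for you, and your observation that $(s_3 s_4)((s_3 s_4)V)$ has the same subword-length set as $w$ (both equal $\{1,2,l\}$ united with the subword lengths of $v_0$, by Lemma \ref{lem_subduh}) correctly justifies $\sigma = p$ there; note also that your argument silently excludes the case $(s_1 s_2) = \lambda (s_3 s_4)$, since that would make $w$ itself reducible. What your route buys is the complete avoidance of Lemma \ref{lem_bigswap} in this proposition, making the proof shorter and more local; what it costs is only a small amount of care at degenerate instances (applying Lemmas \ref{lem_smolswap2} and \ref{lem_swap2} with repeated letters, or with the inner word of length $1$ when $l = 4$ or $l = 2p+2$), which lie slightly outside the literal hypotheses of those lemmas but for which the $\sigma$-computations go through verbatim --- a level of edge-case informality comparable to the paper's own treatment.
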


\begin{proof}

Firstly since $w$ is a $p$-word, by Lemma \ref{lem_sparse} it has an $l$-sprout sequence starting with $p+1$ elements equal to $(2,\ldots,2,1)$. Thus, using the anticommutativity and Lemma \ref{lem_ac} we can construct the word 
\begin{equation} \label{w} \hat{w} = (s_1 s_2) ( (s_3 s_4) ( \ldots  ( (s_{2p-1} s_{2p}) (s_{2p+1} w') ) \ldots ),
\end{equation}  
where  $s_i \in \SS$ and $w'$ has length $l-2p-1$, such that  $\hat{w}=w$ in $\A$ and $\hat{w}$ has the same length and value of the step function as $w$. This means that now we can assume that $w$ is already in the form~\eqref{w}.

Consider firstly the case $p=1$.  This means $w = (s_1 s_2)(s_3 w')$. By Lemma \ref{lem_smolswap}  we get $w=(s_1 s_3) (s_2 w') +f_1 x_1 + \ldots + f_r x_r$, where  $f_i = \pm 1$ and $x_i$ are the other summands from the right-hand side of Identity \ref{eq_smolswap}. Note that by Lemma \ref{lem_smolswap2} we have $\sigma(x_i) = 0 < 1 =\sigma(w)$, which means that $x_i$  is reducible for each $i=1,\ldots, r$. Thus, the word $(s_1 s_3) (s_2 w')$ is  irreducible.

Note that the words $s_2 w'$ and $s_3 w'$  are irreducible as subwords of irreducible words. Assume that $s_2 w'$ and $s_3 w'$ are linearly dependent modulo $L_{l-3}$. Hence by Lemma \ref{lem_sim} the word $(s_1 s_2) (s_2 w')$ is irreducible. However $(s_1 s_2) (s_2 w') = s_1(w's_2 \cdot s_2) + w' (s_2 s_2 \cdot s_1) + s_2 (s_2 s_1 \cdot w') + s_2 (s_1 w' \cdot s_2)$, where the second word on the right-hand side is equal to zero as $s_2 s_2 = 0$ and other words are reducible as the value of the step function on them is again $0$. Thus the assumption is incorrect and $s_2 w'$ and $s_3 w'$ are linearly independent modulo~$\L_{l-3}$.

Now we consider the case $p\ge 2$. Consider the given word in the form~\eqref{w}: $$w  =  (s_1 s_2) ( (s_3 s_4) ( \ldots  ( (s_{2p-1} s_{2p}) (s_{2p+1} w') ) \ldots ).$$ 
By Lemma \ref{lem_bigswap} the word $w_p=(s_3 s_4) ( \ldots  ( (s_{2p-1} s_{2p}) ((s_1 s_2) (s_{2p+1} w') ) ) \ldots )$ is a $p$-step word equivalent to~$w$.
By Lemma~\ref{lem_smolswap}  
\begin{align}\label{eq_shsh}
(s_1 s_2) (s_{2p+1} w') = (s_1 s_{2p+1}) (s_{2} w') +f_1 y_1 + \ldots + f_r y_r
\end{align}
 where   $f_i = \pm 1$ and $y_i$ are the other summands from the right-hand side of Identity \ref{eq_swap}. Note that by Lemma \ref{lem_smolswap2} all of the words on the right hand side are $3$-bounded, and $\sigma((s_1 s_2) (s_{2p+1} w') ) = \sigma((s_1 s_{2p+1}) (s_{2} w') =1$, while $\sigma(y_i) =0$. By sequentially multiplying the equality \eqref{eq_shsh}  by  $(s_{2p-1} s_{2p}), \ldots, (s_3 s_4)$ on the left we would get $w_p = w'_p + f_1 Y_1 +\ldots + f_r Y_r$, where $w'_p = (s_3 s_4) (\ldots (  (s_1 s_{2p+1}) (s_{2} w') ) \ldots ),$ all words are $3$-bounded and $\sigma(w'_{p}) = p$. Moreover, by  Lemma \ref{lem_pgrow} applied $p-1$ times $\sigma(Y_i) = \sigma(y_i) + p-1 = p-1 < p$ . Thus all $Y_i$, $i=1,\ldots,r$,  are reducible, and $w'_p$ is irreducible.

Assume that  $(s_3 s_4) (\ldots (  (s_1 s_{2}) (s_{2p+1} w') ) \ldots )$ (the irreducible subword of $w$ of length $l-2$) and $(s_5 s_6) (\ldots (  (s_1 s_{2p+1}) (s_{2} w') ) \ldots )$ (the irreducible subword of $w'_p$ of length $l-2$) are linearly dependent modulo $L_{l-3}$. By Lemma \ref{lem_sim} it follows that $(s_1 s_2) ( (s_5 s_6) (\ldots (  (s_1 s_{2p+1}) (s_{2} w') ) \ldots ) ) $ is an irreducible word. However, as this word has the same set of lengths of subwords as $w$, it also has the same value of the step function. By applying Lemma \ref{lem_bigswap} to this word we would get that $(s_5 s_6) (\ldots (  (s_1 s_{2}) (s_{2} w') ) \ldots )$ is an irreducible word. Thus its subword   $(s_1 s_{2}) (s_{2} w')$ would be irreducible, which is incorrect due to the same reasoning as in the case $p=1$. Thus the assumption is false and the two words are not  linearly dependent modulo~$\L_{l-3}$.

\end{proof}

\begin{theorem}
Let $\A$ be a Malcev algebra  over a field $\F$ with $\char \F \neq 2$. Then it holds that $l(\A) \le d-1$ where $d = \dim \A$. If $\A$  is not Lie algebra, then it holds that $l(\A) \le d-2$ .
\end{theorem}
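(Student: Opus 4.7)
The plan is to analyze the characteristic sequence $(m_1,\ldots,m_d)$ of an arbitrary generating set $\SS$ of $\A$. I will combine three constraints on this sequence---its consecutive differences $m_{j+1}-m_j$ lie in $\{0,1,2\}$, every difference equal to $2$ is immediately preceded by a difference equal to $0$, and the first positive difference equals $1$---with a short dimension count.

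The first constraint is Lemma~\ref{lem_stedstep} with $k=3$, since the corollary above shows Malcev algebras in $\char\F\ne 2$ are $3$-mixing. For the second, assume $m_j-m_{j-1}=2$: Lemma~\ref{lem_ggap} supplies an irreducible $3$-bounded $p$-step word of length $m_j$ with $p\ge 1$; applying Proposition~\ref{prop_fin} to it produces two irreducible words of length $m_j-2=m_{j-1}$ that are linearly independent modulo $\L_{m_j-3}$; and Lemma~\ref{lem_same} translates this into $m_{j-1}=m_{j-2}$. For the third constraint, anticommutativity in $\char\F\ne 2$ forces $\A$ to be non-unital, so $m_1=1$ and the initial constant block of the sequence has length $t_0:=\dim\L_1(\SS)$. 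If the first positive difference were $m_{t_0+1}-m_{t_0}=2$, there would be no irreducible word of length $2$ in $\SS$; hence every product of two generators would lie in $\L_1(\SS)$, making $\L_1(\SS)$ a subalgebra of $\A$ containing $\SS$. This forces $\L_1(\SS)=\A$ and contradicts the presence of a later entry equal to $3$ in the sequence.

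For the count, write the distinct values of the sequence as $v_0<v_1<\ldots<v_s$ with respective multiplicities $t_0,t_1,\ldots,t_s$, and let $g$ denote the number of indices $i\ge 2$ with $v_i-v_{i-1}=2$ (no such gap arises at $i=1$ by the third constraint). Then $v_s-v_0=s+g$, and the second constraint gives $t_{i-1}\ge 2$ for each of the $g$ gap indices; these gap constraints involve indices $i-1\ge 1$, disjoint from the constraint $t_0\ge\dim\L_1(\SS)$, so
\[
d=\sum_{i=0}^{s}t_i\ \ge\ (s+1)+(\dim\L_1(\SS)-1)+g,
\]
which rearranges to $l(\SS)=v_s=1+s+g\le d-\dim\L_1(\SS)+1$.

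To finish, I bound $\dim\L_1(\SS)$ from below. Anticommutativity collapses any single-generator subalgebra to dimension~$1$, so for any generating set of a Malcev $\A$ with $d\ge 2$ we have $\dim\L_1(\SS)\ge 2$, giving $l(\A)\le d-1$. If moreover $\A$ is not a Lie algebra, Proposition~\ref{prop_liemalcev} rules out $\dim\L_1(\SS)=2$, because a generating set spanning a two-dimensional $\L_1$ would generate only a Lie subalgebra and therefore could not generate the non-Lie algebra $\A$; hence $\dim\L_1(\SS)\ge 3$ and $l(\A)\le d-2$. The main obstacle is the third constraint: without ruling out the initial gap $v_1-v_0=2$ the counting yields only the weaker bound $l(\A)\le d$, so separating the non-unital case from the ``jump-at-the-start'' pathology via the closure argument on $\L_1(\SS)$ is the decisive step.
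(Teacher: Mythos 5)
Your proof is correct and follows essentially the same route as the paper's: the same three constraints on the characteristic sequence (consecutive differences in $\{0,1,2\}$ from the $3$-mixing property, each $2$-jump forced to be preceded by a repetition via Lemma~\ref{lem_ggap}, Proposition~\ref{prop_fin} and Lemma~\ref{lem_same}, and the exclusion of a jump at the start), followed by the same dimension count. The only departure is that you derive the uniform bound $l(\SS)\le d-\dim\L_1(\SS)+1$ and hence settle the case $\dim\L_1(\SS)=2$ by the same counting, whereas the paper observes via Proposition~\ref{prop_liemalcev} that $\A$ is then a Lie algebra and cites the known bound $l(\A)\le d-1$ for Lie algebras; your version is marginally more self-contained.
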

\begin{proof}
Consider a generating set $\SS$ of algebra $\A$ such that $l(\SS) = l(\A)$. If   $\dim \L_1(\SS) \le 2$, then by Proposition \ref{prop_liemalcev}  $\A$  is a Lie algebra.  In this case the result follows from  \cite[Proposition~4.7]{GutK21}.  Thus below we assume  $\dim \L_1(\SS) \ge 3$.

Consider the characteristic sequence $M=(m_1,\ldots,m_d)$ of~$\SS$. 

Note that \begin{align}\label{eq_vd} m_d = m_3 + (m_4 - m_3) + \ldots + (m_d - m_{d-1}). \end{align}  Since a Malcev algebra is $3$-mixing, by Lemma \ref{lem_stedstep}  we have $m_j - m_{j-1}  \le 2$. As the characteristic sequence is non-decreasing by Definition \ref{CharSeqB},  $m_j - m_{j-1}  \ge 0$. Thus, for $j \in \{4,\ldots,d\}$ we have $m_j - m_{j-1}  \in \{ 0,1,2 \}$.

Define the sets 
$I_0 = \{ j \vert  m_j - m_{j-1} =0 \}, \ I_1 = \{ j\vert   m_j - m_{j-1} =1 \}, \ I_2 = \{ j\vert   m_j - m_{j-1} =2 \}\subseteq \{4,\ldots,d\}$ and denote their cardinalities by $J_0= \vert I_0 \vert $, $J_1 = \vert I_1\vert $, $J_2 = \vert I_2 \vert$. By the observation above, $I_0 \sqcup I_1 \sqcup I_2 = \{4,\ldots,d\}$, thus $J_0 + J_1 + J_2 = d-3$. Additionally, by grouping respective terms of the equality (\ref{eq_vd}) we get $$m_d =m_3 + 0 J_0 + 1 J_1 + 2J_2,$$ and, since $m_3 = 1$, this means $$m_d =1 + 0 J_0 + 1 J_1 + 2J_2.$$

Assume $j \in I_2$, i.e. $m_j - m_{j-1} = 2$. In this case we have $j \ge 5$ as $m_3 = m_2 = m_1 = 1$, and either $m_4=1$ or $m_4 \ge 2$. By  Corollary \ref{cor_sum}  $m_4$ is equal to the sum of two previous elements, i.e. $m_4 =2$. By Lemma \ref{lem_ggap} $m_j - m_{j-1} = 2$ implies that there exists an irreducible $p$-step word of length $m_j$ in $\SS$ with $p \ge 1$. Thus by Proposition \ref{prop_fin} there are at least two non-equivalent words of length $m_{j-1}=m_j - 2$, which implies $m_{j-1} = m_{j-2}$ by Lemma \ref{lem_same}. Thus, for each $j \in I_2$ we have $j-1 \in I_0$ and the inequality $J_2 \le J_0$ is true, which allows us to conclude  by Lemma \ref{N=n}  that $$l(\A)  = l(\SS) = m_d = 1 + J_1 + 2J_2 \le 1+ J_0 + J_1 + J_2 = d-2.$$
\end{proof}

\section*{Data availability statement}

All data generated or analyzed during this study are included in this published article. 

\section*{Acknowledgment}

The work of the first author is partially financially supported by   the grant RSF 21-11-00283.

\end{document}